\theoremstyle{plain}
\numberwithin{equation}{subsection}
\newtheorem{thm}[equation]{Theorem}
\newtheorem{ithm}[subsection]{Theorem}
\newtheorem{icor}[subsection]{Corollary}
\newtheorem{prop}[equation]{Proposition}
\newtheorem{propconstr}[equation]{Proposition/Construction}
\newtheorem{lemma}[equation]{Lemma}
\newtheorem{Definition}[equation]{Definition}
\newtheorem{Remark}[equation]{Remark}
\newtheorem{cor}[equation]{Corollary}
\theoremstyle{remark}
\newtheorem{para}[equation]{\bf}
\theoremstyle{plain}
\renewcommand{\subsubsection}{\addtocounter{equation}{1}{\vskip 6pt \bf\arabic{section}.\arabic{subsection}.\arabic{equation}}}
\theoremstyle{definition}
\newcommand{\quash}[1]{}  %%Anything in \quash is ignored
\newcommand{\nc}{\newcommand}
\nc{\on}{\operatorname}
\newcommand{\lps}{[\![}
\newcommand{\rps}{]\!]}
\renewcommand{\phi}{\varphi}
\newcommand{\fraka}{{\mathfrak a}}
\newcommand{\fraki}{{\mathfrak i}}
\newcommand{\frakm}{{\mathfrak m}}
\newcommand{\eK}{{\sf K}}
\newcommand{\eE}{{\sf E}}
\newcommand{\eG}{{G}}
\newcommand{\frakM}{{\mathfrak M}}
\newcommand{\RR}{{\mathbb R}}
\newcommand{\Sh}{{\rm Sh}}
\newcommand{\SSh}{{\mathscr S}}
\newcommand{\frakS}{{\mathfrak S}}
\newcommand{\frakT}{{\mathfrak T}}
\newcommand{\frakX}{{\mathfrak X}}
\newcommand{\bbA}{{\mathbb A}}
\newcommand{\DD}{{\mathbb D}}
\newcommand{\calA}{{\mathcal A}}
\newcommand{\calD}{{\mathcal D}}
\newcommand{\calF}{{\mathcal F}}
\newcommand{\calG}{{\mathcal G}}
\newcommand{\calM}{{\mathcal M}}
\newcommand{\calP}{{\mathcal P}}
\newcommand{\calQ}{{\mathcal Q}}
\newcommand{\calT}{{\mathcal T}}
\newcommand{\calX}{{\mathcal X}}
\nc{\al}{{\alpha}} \nc{\be}{{\beta}}
\nc{\ve}{{\varepsilon}} \nc{\Ga}{{\Gamma}}
\nc{\La}{{\Lambda}}
\def\0{\circ}
\newcommand{\cal}{\mathcal}
\renewcommand{\AA}{{\mathbb A}}
\newcommand{\GG}{{\mathbb G}}
\renewcommand{\inf}{{\rm inf}}
\newcommand{\C}{{\mathbb C}}
\newcommand{\R}{{\mathbb R}}
\newcommand{\Q}{{\mathbb Q}}
\newcommand{\iso}{\cong}
\newcommand{\et}{{\text{\rm \'et}}}
\newcommand{\X}{{\mathcal X}}
\newcommand{\Y}{{\mathcal Y}}
\newcommand{\Gg}{{\mathcal G}}
\newcommand{\Gm}{{{\mathbb G}_{\rm m}}}
\newcommand{\Z}{{\mathbb Z}}
\newcommand{\F}{{\mathcal F}}
\newcommand{\ti}{\tilde}
\newcommand{\Spec}{{\rm Spec \, } }
\newcommand{\Spf}{{\rm Spf } }
 \renewcommand{\O}{{\mathcal O}}
\renewcommand{\SS}{{\mathscr S}}
\newcommand{\Dd}{{\mathscr D}}
\newcommand{\GL}{{\rm GL}}
\newcommand{\M}{{\mathcal M}}
\renewcommand{\L}{{\mathcal L}}
\newcommand{\Res}{{\rm Res}}
\newcommand{\cris}{{\rm cris}}
\newcommand{\Mloc}{{\rm M}^{\rm loc}}
\newcommand{\rH}{{\mathrm H}}
\def\thfill{\null\nobreak\hfill}
\def\endproof{\thfill\vbox{\hrule
  \hbox{\vrule\hbox to 5pt{\vbox to 5pt{\vfil}\hfil}\vrule}\hrule}}
\renewcommand{\P}{{\cal P}}
\newcommand{\Gal}{{\rm Gal}}
\newcommand{\GSp}{{\rm GSp}}
\newcommand{\OEv}{{\mathcal O}_{{\sf E}, (v)}}
\newcommand{\Lr}{{\mathcal L}}
\newcommand{\Ls}{{\mathcal L}}
\newcommand{\sG}{{\mathscr G}}
\newcommand{\sD}{{\mathscr D}}
  \newcommand{\sbk}{s_{a, \scriptscriptstyle{\rm BK}}}
  \newcommand{\sad}{s_{a, \scriptscriptstyle{\rm D}}}
\newcommand{\xU}{{\ensuremath{{\tilde M}_1}}}
\newcommand{\xUo}{{\ensuremath{{\tilde M}_{1,0}}}}
\DeclareSymbolFontAlphabet{\mathbb}{AMSb} %to ensure that the meaning of \mathbb does not change
\DeclareSymbolFontAlphabet{\mathbbl}{bbold} 
\newcommand{\Prism}{{\mathlarger{\mathbbl{\Delta}}}}
\begin{document}

\title[ ]{On integral models of Shimura varieties}

\author[G. Pappas]{G. Pappas}
\thanks{Partially supported by  NSF grants  DMS-1701619, DMS-2100743, and the Bell Companies Fellowship Fund through the Institute for Advanced Study}
  % 2000  Mathematics Subject Classification: Primary 11G18; Secondary 14G35 }

\address{Dept. of
Mathematics\\
Michigan State
Univ.\\
E. Lansing\\
MI 48824\\
USA} \email{pappasg@msu.edu}

 \begin{abstract}
 We show how to characterize integral models of Shimura varieties over places  
of the reflex field where the level subgroup is parahoric by formulating a definition of a ``canonical" integral model. We then prove that in Hodge type cases and under a tameness hypothesis, the integral models constructed by the author and Kisin
in previous work are canonical and,
 in particular, independent of choices. A main tool is a theory of displays with parahoric structure that we develop in this paper.  
  \end{abstract}
\date{\today}

\maketitle

%\vspace{-1ex}

\tableofcontents
%\vspace{-2ex}

%{\footnotesize 2000  Mathematics Subject Classification: Primary 11G18; Secondary 14G35 }

 \def\ve{\medskip}

 \section{Introduction} 
 
\subsection{} In this paper, we show how to uniquely characterize integral models of Shimura varieties over  some primes where non-smooth reduction is expected. More specifically, we consider integral models over primes $p$ at which 
the level subgroup is parahoric. Then, under some further assumptions, we provide a notion of a ``canonical" integral model. 

At such primes, the Shimura varieties have integral models with complicated singularities (\cite{R}, \cite{R-Z}). This happens even for the most commonly used Shimura varieties with level structure, such as Siegel varieties, and it foils attempts to characterize the models by simple conditions. The main observation of this paper is that we can characterize these integral models by requiring that they support  suitable ``$\Gg$-displays", i.e. filtered Frobenius modules with $\Gg$-structure, where $\Gg$ is the smooth integral $p$-adic group scheme which corresponds to the level subgroup. We then prove that these modules exist in most Hodge type cases treated by the author and Kisin in \cite{KP}. As a corollary, we show that these integral models of Shimura varieties with parahoric level structure, are independent of the choices made in their construction. 

Let us first recall the story over ``good" places, i.e. over primes at which the level subgroup is hyperspecial. One expects that there is an integral model with smooth reduction at such primes. This expectation was first spelled out by Langlands in the 80's. Later, it was pointed out by Milne \cite{Milne} that one can uniquely characterize smooth integral models over the localization of the reflex field at such places by requiring that they satisfy a Neron-type extension property. Milne calls smooth integral models with this property ``canonical''. The natural integral models of Siegel Shimura varieties, at good primes, are  smooth and satisfy the extension property. Therefore, they are canonical. In this case, the extension property follows by the Neron-Ogg-Shafarevich criterion and a purity result of Vasiu and Zink \cite{VasiuZink} about extending abelian schemes over codimension $\geq 2$ subschemes of smooth schemes. This argument extends to the very general class of Shimura varieties of abelian type at good primes, provided we can show there is  a smooth integral model which is, roughly speaking, constructed using moduli of abelian varieties. This existence of such a canonical smooth integral model for Shimura varieties of abelian type at places over good primes was shown by Kisin (\cite{KisinJAMS}, see also earlier work of Vasiu \cite{VasiuAsian}). 

The problem becomes considerably harder over other primes. Here, we are considering primes $p$ at which  the level subgroup is parahoric. For the most part, we also require that the reductive group splits over a tamely ramified extension,
although our formulation is more general. Under these assumptions, models for Shimura varieties of abelian type, integral at places over such $p$, were constructed by Kisin and the author \cite{KP}.
This follows work of Rapoport and Zink \cite{R-Z}, of Rapoport and the author, and of many others, see \cite{Pappasicm}. The construction in 
\cite{KP} uses certain simpler schemes, the ``local models" that depend only on the local Shimura data. Then, integral models for Shimura varieties of Hodge type are given by taking  the normalization of the Zariski closure of a well-chosen embedding of the Shimura variety in a Siegel moduli scheme over the integers. More generally, models of Shimura varieties of abelian type are obtained from those of Hodge type by a quotient construction that uses Deligne's theory of connected Shimura varieties. All these integral models of Shimura varieties have the same \'etale local structure as the corresponding local models. However, the problem of characterizing them globally or showing that they are independent of choices was not addressed in loc. cit.\footnote{with the exception of the very restricted result \cite[Prop. 4.6.28]{KP}.} Here, we give a broader notion of ``canonical" integral model and solve these problems when the varieties are of Hodge type.
Such a characterization was not known before, not even for general PEL type Shimura varieties.

\subsection{}  Let us now explain these results more carefully. 

Let $(G, X)$ be a Shimura datum \cite{DeligneCorvallis} with corresponding conjugacy class  of minuscule cocharacters  $\{\mu\}$ and reflex field $\eE$. To fix ideas, we will always assume that the center $Z(G)$ of $G$ 
has the same $\Q$-split rank  as   $\R$-split rank. (This condition holds for Shimura data of Hodge type.)
For an open compact subgroup $\eK\subset G(\bbA^f)$ of the finite adelic points of $G$, the Shimura variety
\[
{\rm Sh}_{\eK}(G, X)=G(\Q)\backslash (X\times G(\bbA_f)/\eK)
\]
has a canonical model over $\eE$. 

Fix an \emph{odd} prime $p$. Suppose $\eK=\eK_p\eK^p$, with $\eK_p\subset G(\Q_p)$ and
$\eK^p\subset G(\bbA^p_f)$, both compact open, with $\eK^p$ sufficiently small. 
Denote by $\Ls_\eK$ the pro-\'etale $\Gg(\Z_p)$-cover
\[
{\Ls}_\eK:=\varprojlim_{\eK'_p\subset \eK_p}\Sh_{\eK'_p\eK^p}(G, X) \to  \Sh_{\eK}(G, X) ,
\] 
with $\eK'_p$ running over all compact open subgroups of $\eK_p$.

Assume that:

 (a) The level $\eK_p$ is a \emph{parahoric} subgroup in the sense of Bruhat-Tits \cite{T}, i.e. $\eK_p$ is the neutral component of the stabilizer of a point in the (enlarged) building of $G(\Q_p)$. Then $\eK_p=\Gg(\Z_p)$, where $\Gg$ is the corresponding parahoric smooth connected affine group scheme over $\Z_p$ with $\Gg\otimes_{\Z_p}\Q_p=G_{\Q_p}$ (\cite{T}). 
 
 We will occasionally assume the slightly stronger:
 
 (a$+$) The level $\eK_p$ is a \emph{connected parahoric} subgroup. By definition, this means that $\eK_p$ is parahoric and is the stabilizer of a point in the enlarged building, i.e. we can choose the point such that the stabilizer is actually connected.  
 (Such a subgroup is sometimes also called a \emph{parahoric stabilizer}.)

Now choose a place $v$ of $\eE$ over $p$. Let $\O_{\eE, (v)}$ be the localization of the ring of integers $\O_{\eE}$ at $v$.
Denote by $E$ the completion of $\eE$ at $v$, by $\O_E$ the integers of $E$ and fix an algebraic closure $k$ of the residue field $k_E$ of $E$. We can also consider $\{\mu\}$ as a conjugacy class of cocharacters which is defined over $E$.
Under some mild assumptions (see \S \ref{LMtheory}), we have the local model
\[
\Mloc=\Mloc(\Gg, \{\mu\}),
\]
as characterized by \cite[Conj. 21.4.1]{Schber}. This is a flat and projective $\O_E$-scheme with $\Gg$-action. Its generic fiber
is $G_E$-equivariantly  isomorphic to the variety $X_\mu$ of parabolic subgroups of $G_E$ of type $\mu$,
and its special fiber is reduced. 

We will assume:

(b)  There is a closed group scheme
immersion $\iota: \Gg\hookrightarrow \GL_n$ over $\Z_p$ such that $\iota(\mu)$ is conjugate to one of the standard minuscule cocharacters of $\GL_n$,  $\iota(\Gg)$ contains the scalars, 
and the map $\iota$ gives an equivariant closed immersion
\[
\iota_*: \Mloc\hookrightarrow {\rm Gr}(d, n)_{\O_E}
\]
in a Grassmannian, where $d$ is determined by $\iota(\mu)$.

 Under the assumptions (a) and (b), we define ``$(\Gg, \{\mu\})$-displays" which are group-theoretic generalizations of Zink's displays (\cite{Zink}, \cite{ZinkCFT}). This is the main invention in the paper, see below. We think it has some independent interest.

We now ask for  $\O_{\eE, (v)}$-models $\SS_{\eK}=\SS_{\eK_p\eK^p}$ (separated schemes of finite type and flat over $\O_{\eE, (v)}$) of the Shimura variety $\Sh_{\eK}(G, X)$ which are normal. In addition, we require:
 
 \begin{itemize}
 
 \item[(1)] For $\eK'^p\subset \eK^p$, there are finite \'etale morphisms 
 \[
 \pi_{\eK'_p, \eK_p}: \SS_{\eK_p\eK'^p}\to \SS_{\eK_p\eK'^p}
 \]
  which extend the natural morphism $\Sh_{\eK_p\eK'^p}(G, X)\to \Sh_{\eK_p\eK^p}(G, X)$. 

 \item[(2)] The scheme $\SS_{\eK_p}=\varprojlim_{\eK^p}\SS_{\eK_p\eK^p}$ satisfies the ``extension property" for dvrs of mixed characteristic $(0, p)$, i.e. for any such dvr $R$
 \[
 \SS_{\eK_p}(R[1/p])=\SS_{\eK_p}(R).
 \]
 
 \item[(3)]  The $p$-adic formal schemes $\widehat \SS_{\eK}=\varprojlim_n \SS_\eK\otimes_{\O_{\eE, (v)}}\O_{\eE, (v)}/(p)^n$ support    locally universal {$(\Gg, \{\mu\})$-displays} 
$
 \Dd_\eK
$
 which are associated with $\Ls_{\eK}$. We  ask that these are compatible for varying $\eK^p$, i.e. that there are  compatible isomorphisms
 \[
 \pi_{\eK'_p, \eK_p}^*\Dd_{\eK }\simeq \Dd_{\eK'}
 \]
over the system of morphisms $\pi_{\eK'_p, \eK_p}$ of (1).
\end{itemize}
 \smallskip

We will explain below the rest of the terms in (3) including the meaning of having a $(\Gg, \{\mu\})$-display being ``associated" with the pro-\'etale $\Gg(\Z_p)$-cover $\Ls_{\eK}$. Our first main result is (always under the standing hypothesis on the center $Z(G)$, also $p\neq 2$):

 \begin{ithm}\label{introThm1}
  Assume $(G, X, v, \eK_p)$ satisfy (a) and (b) above. Then there is at most one (up to unique isomorphism), pro-system of normal $\O_{\eE, (v)}$-models $\SS_{\eK}=\SS_{\eK_p\eK^p}$ of the Shimura variety $\Sh_{\eK}(G, X)$ which satisfy (1), (2) and (3) above.
 \end{ithm}
 
 In fact, we prove a slightly more general result.  (See Theorem \ref{charThmA} and  Corollary \ref{charThm}.)

We call integral models $\SS_{\eK}$ which satisfy the above, \emph{canonical}. 

Assume now that the global Shimura datum $(G, X)$ is of Hodge type and that $G$ splits over a tamely ramified extension of $\Q_p$.  Then, under the  assumption (a+), assumption (b) is also satisfied. In this situation, ``nice" integral models $\SS_{\eK}(G, X)$ of the Shimura variety $\Sh_{\eK}(G, X)$ have been constructed in \cite[\S 4]{KP}, see \cite[Theorem 4.2.7]{KP}. These integral models  depend, a priori, on the choice of a suitable Hodge embedding. Our second main result is:
 
  \begin{ithm}\label{introThm2} (Theorem \ref{thmLast})
 Assume $(G, X)$ is of Hodge type, $G$ splits over a tamely ramified extension of $\Q_p$, and $ \eK_p$ is  connected parahoric.  The integral models $\SS_{\eK}(G, X)$ of \cite[Theorem 4.2.7]{KP} satisfy (1), (2), and  (3).
 \end{ithm}
 
 Since the integral models $\SS_{\eK}(G, X)$ of \cite[Theorem 4.2.7]{KP} are normal,  by combining the two results, we obtain:
 
 \begin{icor} (Theorem \ref{indThm})
 Assume $(G, X)$ is of Hodge type, $G$ splits over a tamely ramified extension of $\Q_p$, and $ \eK_p$ is  connected parahoric. The integral models $\SS_{\eK}(G, X)$ of \cite[Theorem 4.2.7]{KP} are, up to unique isomorphism, independent of the choices in their construction.  
 \end{icor}

\subsection{}   We now explain the terms that appear in condition (3). For more details, the reader is referred to the main body of the paper.

Suppose $R$ is a
  $p$-adic  flat  $\O_E$-algebra. 
Denote by $W(R)$ the ring of ($p$-typical) Witt vectors with entries from $R$
and by $\phi:  W(R)\to  W(R)$ the Frobenius endomorphism.  

A $(\Gg, \Mloc)$-display
$\calD=(\calP, q, \Psi)$ over $R$ consists of a $\Gg$-torsor $\calP$ over
$\Spec(W(R))$, a $\Gg$-equivariant morphism 
\[
q:\calP\times_{\Spec(W(R))}\Spec(R)\to \Mloc,
\]
and a $\Gg$-isomorphism $\Psi: \calQ\xrightarrow{\sim} \calP $. Here, $\calQ$ is a $\Gg$-torsor over $\Spec(W(R))$ which is the \emph{modification of $\phi^*\calP$ along $p=0$, given by $q$} (see    Proposition/Construction \ref{MoPro}). 

By its construction,
$\calQ$ comes together with an isomorphism of $G$-torsors 
\[
 \phi^*\calP_{|\Spec(W(R)[1/p])}\xrightarrow{\sim} \calQ_{|\Spec(W(R)[1/p])}
\]
over $\Spec(W(R)[1/p])$.  Composing this isomorphism with $\Psi_{|\Spec(W(R)[1/p])}$ gives
\[
 \Phi: \phi^*\calP_{|\Spec(W(R)[1/p])}\xrightarrow{\sim} \calP_{|\Spec(W(R)[1/p])}.
\]
This is the ``Frobenius" of the $(\Gg, \Mloc)$-display.

Since $\Mloc$ is determined by $(\Gg, \{\mu\})$ we often just say ``$(\Gg, \{\mu\})$-display".  The definition of a $(\Gg, \{\mu\})$-display resembles 
 that of a shtuka and of its mixed characteristic variants (\cite{Schber}). 
 (In the shtuka lingo, one would say that ``there is one leg 
along $p=0$, bounded by $\mu$".) It is also a generalization of the concept of $(\Gg,\{\mu\})$-display due to the author and B\"ultel \cite{BP}. (This required the restrictive assumption that 
 $\Gg$ is reductive over $\Z_p$; the extension to the parahoric case is far from obvious). When $\Gg=\GL_n$ and 
 $\Mloc={\rm Gr}(d, n)$, it amounts to a display of height $n$ and dimension $n-d$ in the sense of Zink
 (see \S\ref{compaZink}). By using Zink's Witt vector descent, we obtain
 a straightforward extension of this definition from ${\rm Spf}(R)$ to non-affine $p$-adic formal schemes like $\widehat \SS_{\eK}$.
 
 If $R$ is, in addition, a Noetherian complete local ring with perfect residue field and $p\neq 2$, there is a similar notion of a {\sl Dieudonn\'e} 
 $(\Gg, \{\mu\})$-display over $R$ for which all the above objects $ \calP$, $q$, $\Psi$,
 are defined over Zink's variant $\hat W(R)$ of the Witt ring. After applying a local Hodge embedding $\iota:\Gg\hookrightarrow \GL_n$, a Dieudonn\'e $(\Gg, \{\mu\})$-display $\calD$ induces a classical Dieudonn\'e display over $R$. By Zink's theory \cite{Zink}, \cite{ZinkCFT}, this gives a $p$-divisible group   over $R$.

 In (3), we ask that $\Dd_\eK=(\calP_\eK, q_\eK, \Psi_\eK)$ is ``associated" with ${\Ls}_{\eK}$. This definition  (Definition \ref{defass2}) is modeled on  the notion of ``associated" used by Faltings, e.g. \cite{Fa1}. It  requires that the $\Z_p$-\'etale local system over the generic fiber
 given by the Tate module of  the $p$-divisible group obtained from $\Dd_\eK$ and $\iota$ as above, 
 agrees with the local system given by ${\Ls}_{\eK}$
 and $\iota$. It also requires that, after specializing at each point with values in a mixed characteristic dvr, the \'etale-crystalline comparison isomorphism sends the \'etale (i.e. Galois invariant) tensors defining ${\Ls}_{\eK}$ to the crystalline tensors defining $\calP_\eK$.

 Finally, we explain the term ``locally universal" in (3): 
 
 For $\bar x\in \SS_\eK(k)$, denote by $\hat R_{\bar x}$ the completion of the strict
 Henselization of the local ring of $\SS_\eK$ at $\bar x$. Set $\calD_{\eK, \bar x}$ 
 for the $(\Gg, \{\mu\})$-display over $\hat R_{\bar x}$ obtained from $\Dd_\eK$ 
 by base change.  
 
 We say that the $(\Gg, \{\mu\})$-display $\Dd_\eK$ is locally universal\footnote{A more correct, but also more cumbersome, term would probably be ``locally formally universal".} if for all $\bar x\in \SS_\eK(k)$, there is a ``rigid" section $s_{\bar x}$ of the $\Gg$-torsor $\calP_{\eK,\bar x}$
over $\Spec( W(\hat R_{\bar x}))$, so that the  composition 
 \[
 q\cdot (s_{\bar x}\times 1): \Spec(\hat R_{\bar x})\to \calP_{\eK,\bar x}\times_{\Spec( W(\hat R_{\bar x}))}\Spec(\hat R_{\bar x})\to \Mloc
 \]
 gives an isomorphism between $\hat R_{\bar x}$ and the corresponding completed local ring of the local model $\Mloc$. Therefore, condition (3) fixes the ``singularity type" of $\SS_{\eK}$ at $\bar x$. In the above, rigid is for a version of the Gauss-Manin connection in our set-up.
 
  Note that this somewhat unusual definition is justified by the fact that  $(\Gg, \{\mu\})$-displays
 are defined only over $\Z_p$-flat bases, so these objects do not have deformation theory
 in the usual sense.

\subsection{}  Let us discuss the proofs of the two main theorems. 

We use an intermediate technical notion, the ``associated system" $(\Ls_\eK, \{\calD_{\eK,\bar x}\}_{\bar x\in \SS_\eK(k)})$ (Definition \ref{defass}), in which $ \calD_{\eK, \bar x}$ are Dieudonn\'e $(\Gg, \{\mu\} )$-displays over the completions of the strict Henselizations $\hat R_{\bar x}$, as before.
A  $(\Gg, \{\mu\})$-display $\Dd_\eK$ which is associated with $\Ls_\eK$, gives such an associated system by base change, as above.  In fact, most of our constructions just use $(\Ls_\eK, \{\calD_{\eK,\bar x}\}_{\bar x\in \SS_\eK(k)})$. Using Tate's theorem on extending homomorphisms of $p$-divisible groups, we show that 
$(\Ls_\eK, \{\calD_{\eK,\bar x}\}_{\bar x\in \SS_\eK(k)})$ is uniquely determined by $\Ls_\eK$ and that
the existence of a locally universal associated system is enough to characterize the 
integral model $\SS_\eK$ uniquely. This leads to the proof of Theorem \ref{introThm1}. 
 We also show that when $\Ls_{\eK}$ comes from the Tate module of a $p$-divisible group 
 with Galois invariant tensors, it can be completed to a (unique up to isomorphism) associated system.
This employs comparison isomorphisms of integral $p$-adic Hodge theory that use work of Kisin, Scholze and others \cite{Schber}, \cite{BMS}, and of Faltings \cite{Fa}, \cite{Fa1}.

To show Theorem \ref{introThm2}, we first show that the integral models given in \cite{KP} support a locally universal associated system. The issue is to show that the unique associated system obtained as above is locally universal.
(Recall that  this imposes that the singularities 
of the integral model $\SS_\eK$ agree with those of the local model $\Mloc$.) This is done, by reexaming the proof of the main result of \cite{KP}. Then, as in work of Hamacher and Kim \cite{HamaKim}, we can also show that these models support a (global) locally universal $(\Gg, \Mloc)$-display $\Dd_\eK$ associated with $\Ls_\eK$. It then follows, that in this case, the integral models of \cite{KP} are canonical as per the definition above.  

 Our point of view fits with the well-established idea, going back to Deligne, that most Shimura varieties should be moduli spaces of $G$-motives with level structures. As such, they should have (integral) canonical models. We can not make this precise yet. However, we consider the locally universal $\Gg$-display as the crystalline avatar of the universal $G$-motive and show that its existence is enough to characterize the integral model. In fact, there should be versions of this characterization for other $p$-adic cohomology theories (see \cite{BMS}, \cite{SchICM}, \cite{BSPrism}). In joint work of the author with M. Rapoport \cite{PRShtuka}, which   developed after the first version of this paper was written, this idea is realized and a different characterization is given. This uses, instead of $\Gg$-displays, $p$-adic $\Gg$-sthukas over the $v$-sheaf $\SS_\eK^\Diamond$, as defined by Scholze (see \cite{Schber}). Our approach in the current paper is more ``classical", since it does not use any of the tools of modern $p$-adic analysis (perfectoid adic spaces, $v$-sheaves and diamonds, Banach-Colmez spaces, etc.) and when it applies it gives somewhat more precise information.

\subsection{}  Finally, we give an outline of the contents of the paper. In \S 2 we prove some preliminary facts about rings of Witt vectors and other $p$-adic period rings, that are used in the constructions. We continue on to define some terminology and give some more preliminaries on torsors in \S 3. In \S 4 we give  the definition of (Dieudonn\'e) displays with $\Gg$-structure. We also construct various relevant structures such an infinitesimal connection and give the notion of a rigid section
of a $\Gg$-display. In \S 5, we show how to construct, using the theory of Breuil-Kisin modules,  such a display from a $\Gg(\Z_p)$-valued crystalline representation of a $p$-adic field. We also give other similar constructions, for example a corresponding Breuil-Kisin-Fargues $\Gg$-module. In \S 6, we give the definition of an associated system and show how to compare two normal schemes with the same generic fiber which both support locally universal associated systems for the same local system. We also show how to give an associated system starting from a $p$-divisible group whose Tate module carries suitable Galois invariant tensors. In \S 7, we apply this to Shimura varieties and show that systems 
 $(\SS_\eK, \Lr_\eK, \{\calD_{\eK,\bar x}\}_{\bar x\in \SS_\eK(k)})$ 
as above are unique. In \S 8, we prove that the integral models of Shimura varieties of Hodge type constructed in \cite{KP} carry locally universal associated systems and are therefore uniquely determined. 

 \subsection{Acknowledgments} We thank M. Rapoport, P. Scholze, and the referee, for useful suggestions and corrections, and V. Drinfeld for interesting discussions.

 \subsection{Notations}
 
 Throughout the paper $p$ is a prime and, as usual, we denote by $\Z_p$, $\Q_p$, the $p$-adic integers, resp. $p$-adic
 numbers. We fix an algebraic closure $\bar\Q_p$ of $\Q_p$. If $F$ is a finite extension of $\Q_p$, we will
 denote by $\O_F$ its ring of integers, by $k_F$ its residue field and by $\breve F$ the completion of the maximal unramified extension of $F$ in $\bar \Q_p$. We often write $X\otimes_AB$ to denote the base change of a scheme $X$ over $\Spec(A)$
 to $\Spec(B)$.
 
 %\vfill\eject

  \section{Algebraic preliminaries}

 We begin with some preliminaries about  rings of Witt vectors and other
  $p$-adic period rings. The proofs can be omitted on the first reading.
  
  We consider the following conditions for a  $\Z_p$-algebra $R$. In what follows, we will have to assume some of these.
  
  \begin{itemize} 
\item[(1)] $R$ is complete and separated for the adic topology given by a finitely  generated ideal $\calA$  that contains $p$, and

 \item[(2)] $R$ is formally of finite type over $W=W(k)$, where $k$ is a perfect field of characteristic $p$.

  \item[ (N)] $R$ is a \emph{normal} domain, is flat over $\Z_p$, and satisfies (1) and (2).

  \item[ (CN)] $R$  satisfies (N) and is a complete local ring. 
    
\end{itemize}

By ``$p$-adic", we always mean $p$-adically complete and separated.

    \subsection{Witt vectors and variants}
 
 \begin{para}
 For a $\Z_{(p)}$-algebra $R$, we denote by 
 \[
 W(R)=\{(r_1, r_2,\ldots , r_n, \ldots )\ |\ r_i\in R\}
 \]
  the ring of ($p$-typical) Witt vectors of $R$. Let $\phi: W(R)\to W(R)$ and $V: W(R)\to W(R)$
  denote the Frobenius and Verschiebung, respectively. Let $I_R=V(W(R))\subset W(R)$ be the ideal of elements
  with $r_1=0$. The projection $(r_1, r_2,\ldots )\mapsto r_1$ gives an isomorphism $W(R)/I_R\simeq R$. For $r\in R$, we set as usual
  \[
  [r]=(r, 0, 0, \ldots )\in W(R)
  \]
  for the Teichm\"uller representative.
  Also denote by
   \[
{\rm gh}:  W(R)\rightarrow \prod\nolimits_{i\geq 1}  R
  \]
  the Witt (``ghost'') coordinates ${\rm gh}=({\rm gh}_i)_i$.  Recall, 
  \[
  {\rm gh}_i((r_1, r_2, r_3, \ldots ))=r_1^{p^{i-1}}+pr_2^{p^{i-2}}+\cdots + p^{i-1}r_i.
  \]
  \end{para}

  \begin{para}
Let $R$ be a complete Noetherian local ring with maximal ideal $\frakm_R$ and perfect residue field $k$
 of characteristic $p$. Assume that $p\geq 3$. There is a splitting
  $W(R)=W(k)\oplus W(\frakm_R)$ and following Zink (\cite{ZinkCFT}), we can consider the subring 
  \[
  \hat W(R)=W(k)\oplus \hat W(\frakm_R)\subset W(R),
  \]
   where $\hat W(\frakm_R)$ consists of those Witt vectors with $r_n\in \frakm_R$ for which the sequence $r_n$ converges to $0$ in the $\frakm_R$-topology of $R$. The subring $\hat W(R)$ is stable under $\phi$ and $V$.  In this case, both $W(R)$ and $\hat W(R)$ are $p$-adically complete and separated local rings.
 \end{para}

  \subsection{}
  
  In this section, we assume that the $\Z_p$-algebra $R$  satisfies (1) and (2):

Let $\frakM\subset W(R)$ be a maximal ideal with residue field $k'$.
We have $I_R\subset \frakM$, since $W(R)$ is $I_R$-adically complete and separated (\cite[Prop. 3]{Zink}). Let 
$\frakm_R=\frakM/I_R\subset W(R)/I_R=R$ be the corresponding maximal ideal of $R$.
Suppose that $\hat R $ is the completion of $R$ at $\frakm$.  Then $W(\hat R)$ is local henselian.
Denote by $W(R)^h_{\frakM}$ the henselization of the localization $W(R)_\frakM$.

\begin{lemma}\label{A1lemma} Assume, in addition to (1) and (2), that $R$ is an integral domain and $\Z_p$-flat.  
Then, the natural homomorphism $R\to \hat R$ induces injections 
\[
W(R)\subset W(R)^h_{\frakM}\subset W(\hat R).
\]
\end{lemma}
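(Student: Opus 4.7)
The plan is to bootstrap both injections from the natural inclusion $R \hookrightarrow \hat R$ by applying the Witt vector functor, to invoke the universal property of henselization to manufacture the arrow $W(R)^h_{\frakM} \to W(\hat R)$, and finally to prove injectivity of this arrow by reducing to local \'etale neighborhoods and using flatness. First, since $R$ is a Noetherian integral domain, Krull's intersection theorem applied to the proper maximal ideal $\frakm_R$ gives $\bigcap_n \frakm_R^n = 0$, so $R \hookrightarrow \hat R$; because $W$ is the identity on underlying $\bbN$-indexed sets of tuples, it preserves set-theoretic injections, yielding $W(R) \hookrightarrow W(\hat R)$.

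Next I set up the middle map. The ring $W(\hat R)$ is local with maximal ideal $\frakM'$ equal to the preimage of $\frakm_{\hat R}$: locality holds because $I_{\hat R}$ lies in the Jacobson radical (by $I_{\hat R}$-adic separatedness of $W(\hat R)$) and $W(\hat R)/I_{\hat R} = \hat R$ is local. Moreover $W(\hat R)$ is henselian, this being the standard fact, recalled in the setup, that Witt vectors of a complete Noetherian local ring are henselian. The homomorphism $W(R) \to W(\hat R)$ is a local map inducing the identity on the common residue field $k' = R/\frakm_R = \hat R/\frakm_{\hat R}$. The universal property of henselization at $\frakM$ therefore produces a unique ring map $W(R)^h_\frakM \to W(\hat R)$ factoring $W(R) \to W(\hat R)$. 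Injectivity of the first inclusion $W(R) \hookrightarrow W(R)^h_\frakM$ is then automatic, since the composition $W(R) \to W(R)^h_\frakM \to W(\hat R)$ is injective.

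The substantive point, and the main obstacle, is the injectivity of $W(R)^h_\frakM \hookrightarrow W(\hat R)$. I would argue this using the description of the henselization as the filtered colimit $\varinjlim_{(B, \frakN)} B_\frakN$ over \'etale neighborhoods $(B, \frakN)$ of $(W(R), \frakM)$, i.e., $B$ a finitely presented \'etale $W(R)$-algebra and $\frakN$ a prime above $\frakM$ with $\kappa(\frakN) = k'$. It suffices to show each $B_\frakN \to W(\hat R)$ is injective. Since $B$ is $W(R)$-flat, tensoring $W(R) \hookrightarrow W(\hat R)$ with $B$ preserves injectivity, so $B \hookrightarrow B \otimes_{W(R)} W(\hat R)$. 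The base-change is \'etale over the henselian local $W(\hat R)$, hence decomposes as a finite product of local finite \'etale factors; the factor corresponding to the prime above $\frakN$ has trivial residue extension $k' \to k'$ and so, by Hensel's lemma, equals $W(\hat R)$ itself. Localizing the injection $B \hookrightarrow B \otimes_{W(R)} W(\hat R)$ at $\frakN$, respectively at the prime above $\frakN$ in the base-change (where the other product factors collapse via idempotents), yields $B_\frakN \hookrightarrow W(\hat R)$, and the filtered colimit of these injections is the desired injection. The delicate point is that $W(R)$ is typically non-Noetherian, so care is needed in invoking the colimit description of the henselization and in checking that localization at $\frakN$ preserves injectivity at the step where the other product factors are killed. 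This can be handled by refining $(B, \frakN)$ within the filtered system so that $B$ itself is local with unique prime over $\frakM$, in which case $B \otimes_{W(R)} W(\hat R) = W(\hat R)$ and the argument becomes transparent.
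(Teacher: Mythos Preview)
Your outline tracks the paper's proof but is more explicit at the crucial step. The paper argues briefly via ghost coordinates: since $R$ is a $p$-torsion-free domain, any $f\notin\frakM$ has every ghost component outside $\frakm_R$, hence is a non--zero-divisor in $W(R)$ and a unit in $W(\hat R)$, giving $W(R)\hookrightarrow W(R)_{\frakM}\hookrightarrow W(\hat R)$ in one stroke. The paper then simply \emph{asserts} that, $W(\hat R)$ being henselian local, one has $W(R)^h_{\frakM}\subset W(\hat R)$. You arrive at the same juncture (by a slightly different route for the first inclusion) and, to your credit, try to justify this last step rather than assert it.

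That justification has a genuine gap, and your proposed fix does not repair it. The general principle in play --- that a local injection $A\hookrightarrow B$ with $B$ henselian forces $A^h\hookrightarrow B$ --- is \emph{false}. Take $A$ to be the local ring of a plane nodal curve at the node (a Noetherian local domain) and $B=A^h/\frakp$ for one of the two minimal primes $\frakp$ of $A^h$; then $B$ is henselian local, $A\hookrightarrow B$ since $\frakp\cap A=(0)$, yet $A^h\to B$ has kernel $\frakp\neq 0$. Your refinement fails in the same example: for the local-\'etale $A$-algebra $C$ obtained by adjoining $u=\sqrt{x+1}$ and localizing at $u\equiv 1$ (so $C$ is local with a unique prime over $\frakm_A$ and trivial residue extension), one finds $C\otimes_A B\cong B\times B'$ with $B'$ a \emph{nonzero} localization of $B$, not $B$ alone; the element $y-xu\in C$ is nonzero but dies under the projection to the $B$-factor. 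Thus your argument is incomplete at precisely the point you flagged as delicate, and the paper's own proof passes over the same point without justification; something specific to the map $W(R)_\frakM\to W(\hat R)$ beyond ``target henselian'' is needed.
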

  
  \begin{proof}
Since $R$ is $p$-torsion free, 
  \[
{\rm gh}:  W(R)\hookrightarrow \prod\nolimits_{i\geq 1}  R
  \]
 is an injective ring homomorphism. If $f=(f_1, f_2,\ldots )\not\in \frakM$, then $f_1\not\in \frakm$. 
  Since $p\in \frakm$, we have ${\rm gh}_i(f)\not\in\frakm$, for all $i$. In particular, ${\rm gh}_i(f)\neq 0$
  and so since $R$ is a domain, $f$ is not a zero divisor in $W(R)$. It follows that
$W(R)$ is a subring of $W(R)_\frakM$. We now consider $W(R)\subset W(\hat R)$. Notice that $f$ is invertible in $W(\hat R)$ 
 since $\hat R$ is $p$-adically complete and $f_1$ is invertible in $\hat R$. Hence, we have an injection $W(R)_\frakM\hookrightarrow W(\hat R)$. The ring $W(\hat R)$ is local and henselian and $W(R)_\frakM\hookrightarrow W(\hat R)$ is a local ring homomorphism. It follows that the henselization 
 $W(R)^h_\frakM$ is contained in $W(\hat R)$.
 \end{proof}

  \quash{
  \begin{lemma}\label{dejonglemma}
  Assume, in addition to (1) and (2), that $R$ is a normal domain flat over $\Z_p$. Then, for all $n\geq 0$, including $n=\infty$, we have
  \[
 W_n(R)=\{f\in W_n(R)[1/p]\ |\ \forall F, \forall \xi:  R\to \O_F, \ \xi(f) \in W_n(\O_F) \},
 \]
 where $F$ runs over all finite extensions of $W(k)[1/p]$ and $\xi$ over all $W(k)$-algebra homomorphisms.
 \end{lemma}

  \begin{proof}
  Consider $f\in W_n(R)[1/p]$ so that $g=p^af\in W_n(R)$ for some $a\geq 0$. 
 Assume that $\xi(g)$ is divisible by $p^a$ in $W_n(\O_F)$, for all $\xi: R\to \O_F$.
 We would like to show that $g$ is divisible by $p^a$ in $W_n(R)$. This will be the case when 
 certain universal polynomials in the ghost coordinates ${\rm gh}_i(g)$ which have coefficients in $\Z[1/p]$, take values in $R$. By \cite[Prop. 7.3.6]{deJongCrys}  this is equivalent to asking that the same 
 polynomials in ${\rm gh}_i(\xi(g))$ take values in $\O_F$, for all $\xi$. This is true by our assumption.
 \end{proof}
 }

    \subsection{}\label{Algcond}
    
     Here we suppose that the $\Z_p$-algebra  $R$ satisfies (N).
    
\begin{para}
  We start by recalling a useful statement shown by de Jong \cite[Prop. 7.3.6]{deJongCrys}, in a slightly different language.
 
 \begin{prop}\label{dejong}
  We have
 \[
 R=\{f\in R[1/p]\ |\ \forall F, \forall \xi:  R\to \O_F, \ \xi(f) \in \O_F \},
 \]
 where $F$ runs over all finite extensions of $W(k)[1/p]$ and $\xi$ over all $W(k)$-algebra homomorphisms. \hfill $\square$
 \end{prop}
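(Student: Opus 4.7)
The plan is first to reduce the nontrivial direction to a codimension-one statement using normality, and then, for each problematic height one prime containing $p$, to produce a test homomorphism $\xi: R\to\O_F$ falsifying the hypothesis. The inclusion $R\subseteq\{f\in R[1/p]:\forall(F,\xi),\ \xi(f)\in\O_F\}$ is immediate. For the converse, fix $f\in R[1/p]$ satisfying the integrality condition at every classical test point and write $f=g/p^n$ with $g\in R$; the goal is $g\in p^nR$. Since condition (N) makes $R$ a Noetherian normal domain, $R=\bigcap_\frakp R_\frakp$ over the height one primes of $R$, and the containment $g\in p^n R_\frakp$ is automatic when $p\notin\frakp$ since then $f=g/p^n\in R[1/p]\subseteq R_\frakp$. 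The task thus reduces to proving $v_\frakp(g)\ge n\,v_\frakp(p)$ for every height one prime $\frakp\ni p$, where $v_\frakp$ is the normalized valuation of the DVR $R_\frakp$.

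Suppose for contradiction that $v_\frakp(g)<n\,v_\frakp(p)$ for some such $\frakp$. The test point $\xi$ will be built by specializing along the generic point of $V(\frakp)$. Because $R$ is formally of finite type over $W$, the formal scheme $\Spf R$ admits a rigid analytic generic fiber $\frakX^{\rm rig}$ over $W(k)[1/p]$, and the maps $\xi:R\to\O_F$ appearing in the statement correspond bijectively to classical points of $\frakX^{\rm rig}$. By Raynaud's formal--rigid correspondence (as developed by Bosch--L\"utkebohmert), the specialization map ${\rm sp}:\frakX^{\rm rig}\to\Spec R/(p)$ surjects onto every irreducible component of the special fiber, and classical points are Zariski dense in every fiber. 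A classical point $x$ specializing to the generic point $\eta_\frakp$ of $V(\frakp)$ therefore yields $\xi=\xi_x: R\to \O_{F_x}$ for which the discrete valuation on $\O_{F_x}$ pulls back $v_\frakp$ with a common positive ramification index $e_x$. Consequently $v_{F_x}(\xi(g))=e_x\,v_\frakp(g)$ and $v_{F_x}(\xi(p))=e_x\,v_\frakp(p)$, so $v_{F_x}(\xi(f))=e_x(v_\frakp(g)-n\,v_\frakp(p))<0$, contradicting $\xi(f)\in\O_{F_x}$.

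The main obstacle will be the density/specialization step: justifying the existence of a classical point above $\eta_\frakp$ that transmits $v_\frakp$ faithfully. This is essentially Raynaud's formal--rigid dictionary for admissible formal schemes; alternatively, one can use the excellence of $R$ and give a direct construction by applying Noether normalization to the formally of finite type $k$-algebra $R/\frakp$, selecting a $\bar k$-point, and lifting along the $W$-flat map $R\to R_\frakp$ via Hensel's lemma in an appropriate finite ramified extension $\O_F$. Everything else in the argument is bookkeeping with valuations on DVRs.
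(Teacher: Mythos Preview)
The paper does not prove this statement; it records it as a known result of de Jong \cite[Prop.~7.3.6]{deJongCrys}. De Jong's argument is rigid-analytic: for a normal admissible formal $W$-scheme $\Spf(R)$, the ring $R$ coincides with the power-bounded elements of the rigid generic fiber, and the sup-norm there is computed on classical points by the maximum modulus principle. Hence $\xi(f)\in\O_F$ for all classical $\xi$ forces $|f|_{\sup}\le 1$, so $f\in R$.

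Your reduction to height-one primes $\frakp\ni p$ via normality is correct, but the construction of the test point has a genuine gap. You assert the existence of a classical point $\xi:R\to\O_F$ with $[F:W[1/p]]<\infty$ for which $v_F\circ\xi$ equals $e_x\cdot v_\frakp$ on $R$. Such a $\xi$ would have $\xi^{-1}(\frakm_F)=\frakp$, giving an injection $R/\frakp\hookrightarrow k_F$ into a finite extension of $k$; this is impossible whenever $R/\frakp$ has positive Krull dimension, i.e., whenever $\dim R>1$. Concretely, for $R=W\lps t\rps$ and $\frakp=(p)$: any $W$-algebra map $R\to\O_F$ induces a $k$-algebra map $k\lps t\rps\to k_F$, which must kill $t$ since $k_F$ is finite over $k$; hence $\xi(t)\in\frakm_F$ and $v_F(\xi(t))>0$, whereas $v_\frakp(t)=0$. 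Your Noether-normalization alternative does not repair this: it only produces $\xi$ specializing to a \emph{closed} point of $V(\frakp)$, with the same defect. What is true, and what de Jong's machinery delivers, is that classical points \emph{approximate} $v_\frakp$: one can make $v_F(\xi(g))/v_F(p)$ arbitrarily close to $v_\frakp(g)/v_\frakp(p)$ by taking $\xi$ in the tube $]V(\frakp)[$ near its non-classical Gauss point. That approximation step is exactly the substantive input your sketch is missing.
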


  \begin{cor}\label{dejonglemma}
We have
\[
 W(R)= (W(R)[1/p])\cap\prod\nolimits_{\xi} W(\O_F) 
\]
 where $F$, $\xi$ are as above.  
 \end{cor}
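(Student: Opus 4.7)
This is the Witt-vector analogue of Proposition~\ref{dejong}, and the plan is to reduce to that proposition applied to each Witt component of $f$ separately. The only subtlety is to give the Witt components of an element of $W(R)[1/p]$ a meaning in $R[1/p]$.

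First I would observe that $R[1/p]$ is a $\mathbb{Q}$-algebra, so the ghost map $W(R[1/p]) \xrightarrow{\sim} \prod_{i\geq 1} R[1/p]$ is a ring isomorphism; in particular the element $p \in W(R[1/p])$ is a unit (it corresponds to $(p,p,\ldots)$ under ghost, which is invertible). Consequently, the natural map $W(R) \to W(R[1/p])$ extends uniquely to a ring map $W(R)[1/p] \to W(R[1/p])$. This extension is injective: if $g/p^n$ maps to zero, then $g$ itself maps to zero in $W(R[1/p])$, and since the composite $W(R) \hookrightarrow \prod_i R \hookrightarrow \prod_i R[1/p] \cong W(R[1/p])$ is injective (by the argument in Lemma~\ref{A1lemma}, using that $R$ is $p$-torsion free), one has $g = 0$ in $W(R)$, hence $g/p^n = 0$ in $W(R)[1/p]$.

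Next, given $f \in W(R)[1/p]$ with $\xi(f) \in W(\O_F)$ for every $\xi$, view $f$ inside $W(R[1/p])$. It then has well-defined Witt components $(f_1, f_2, \ldots)$ with $f_i \in R[1/p]$. For any $W(k)$-algebra map $\xi: R \to \O_F$, functoriality of the Witt vector construction identifies $\xi(f)$ with the Witt vector $(\xi(f_1), \xi(f_2), \ldots) \in W(F)$; the hypothesis $\xi(f) \in W(\O_F)$ therefore yields $\xi(f_i) \in \O_F$ for every $i$ and every $\xi$. Applying Proposition~\ref{dejong} to each $f_i \in R[1/p]$ gives $f_i \in R$, so $f = (f_1, f_2, \ldots) \in W(R) \subset W(R[1/p])$. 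Pulling back along the injection $W(R)[1/p] \hookrightarrow W(R[1/p])$ yields $f \in W(R)$, which is the nontrivial inclusion (the reverse inclusion is immediate).

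The main (modest) obstacle is the bookkeeping around the embedding $W(R)[1/p] \hookrightarrow W(R[1/p])$: one must check that $p$ is a unit there and that Witt components transport correctly under $\xi$. Both come directly from the ghost-map isomorphism over $\mathbb{Q}$-algebras together with functoriality of Witt vectors, after which the proof reduces cleanly to a componentwise invocation of Proposition~\ref{dejong}.
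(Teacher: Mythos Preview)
Your proof is correct and takes a route different from the paper's, though both ultimately rest on Proposition~\ref{dejong}. The paper writes $f=g/p^a$ with $g\in W(R)$ and argues that $p^a\mid g$ in $W(R)$ by appealing to the fact that divisibility by $p^a$ in a Witt ring is controlled by certain universal $\Z[1/p]$-polynomials in the ghost coordinates ${\rm gh}_i(g)$; one then checks these polynomials land in $R$ by pushing forward along each $\xi$ and invoking Proposition~\ref{dejong}. You instead embed $W(R)[1/p]\hookrightarrow W(R[1/p])$ (using that $R[1/p]$ is a $\Q$-algebra so the ghost map is an isomorphism and $p$ is a unit), read off the Witt components $f_i\in R[1/p]$ of $f$, and apply Proposition~\ref{dejong} directly to each $f_i$. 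Your argument is a bit more elementary in that it bypasses the ``universal divisibility polynomials'' for Witt vectors; the paper's argument, on the other hand, stays inside $W(R)$ and works with ghost coordinates, which is the device used repeatedly later (e.g.\ in Proposition~\ref{lemmainf1}). The one bookkeeping point worth making explicit in your write-up is the commutativity of
\[
\begin{array}{ccc}
W(R)[1/p] & \longrightarrow & W(\O_F)[1/p]\\
\downarrow & & \downarrow\\
W(R[1/p]) & \longrightarrow & W(F)
\end{array}
\]
(which follows from naturality of the ghost map), so that the hypothesis $\xi(f)\in W(\O_F)$ really does force $\xi(f_i)\in\O_F$ componentwise; you allude to this but it is the hinge of the argument.
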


  \begin{proof}
 Under our assumption, $W(R)$ is $\Z_p$-flat. Consider $f\in W(R)[1/p]$ so that $g=p^af\in W(R)$ for some $a\geq 0$. 
 Assume that $\xi(g)$ is divisible by $p^a$ in $W(\O_F)$, for all $\xi: R\to \O_F$.
 We would like to show that $g$ is divisible by $p^a$ in $W(R)$. This will be the case when 
 certain universal polynomials in the ghost coordinates ${\rm gh}_i(g)$ which have coefficients in $\Z[1/p]$, take values in $R$. By Proposition \ref{dejong},  this is equivalent to asking that the same 
 polynomials in ${\rm gh}_i(\xi(g))$ take values in $\O_F$, for all $\xi$. This is true by our assumption.
 \end{proof}

\begin{prop}\label{corInter2}  Suppose $R$ satisfies (CN). We have
  \[
  W(R)\cap (\prod_{\xi: R\to \O_F}\hat W(\O_F))=\hat W(R).
  \]
  Here the product is over all finite extensions $F$ of $W(k)[1/p]$ and all $W(k)$-algebra homomorphisms $\xi: R\to \O_F$.
  The intersection takes place in  $\prod_{\xi: R\to \O_F} W(\O_F)$.
  \end{prop}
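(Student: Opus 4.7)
The plan is to argue the two inclusions separately, reducing the nontrivial direction to a detectability statement for powers of $\mathfrak{m}_R$.

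\textbf{The easy inclusion.} The containment $\hat W(R) \subseteq W(R) \cap \prod_{\xi} \hat W(\O_F)$ is by functoriality. Every $W(k)$-algebra map $\xi: R \to \O_F$ induces a map of residue fields $k \to k_F$, hence is local with $\xi(\mathfrak{m}_R^n) \subseteq \mathfrak{m}_F^n$ for all $n$. Consequently $\xi$ maps $\hat W(\mathfrak{m}_R)$ into $\hat W(\mathfrak{m}_F)$, and combined with $\xi(W(k)) \subseteq W(k_F) \subseteq \hat W(\O_F)$ this gives the inclusion.

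\textbf{Reduction.} For the reverse, I would use the decompositions $W(R) = W(k) \oplus W(\mathfrak{m}_R)$ and $\hat W(\O_F) = W(k_F) \oplus \hat W(\mathfrak{m}_F)$, each of which is respected by every $\xi$. Writing $f = f_0 + f_1$ with $f_0 \in W(k)$ and $f_1 \in W(\mathfrak{m}_R)$, the assumption $\xi(f) \in \hat W(\O_F)$ for every $\xi$ becomes the assertion $\xi(f_1) \in \hat W(\mathfrak{m}_F)$ for every $\xi$ (since $\xi(f_0) \in W(k_F)$ automatically). So the problem reduces to showing: if $(r_n)_{n \ge 1}$ is a sequence in $\mathfrak{m}_R$ with $(\xi(r_n)) \to 0$ in the $\mathfrak{m}_F$-adic topology for every $\xi: R \to \O_F$, then $(r_n) \to 0$ in the $\mathfrak{m}_R$-adic topology.

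\textbf{Main step.} I would prove the reduced statement by contradiction: otherwise, some $N \ge 1$ and infinite subset $S \subseteq \mathbb{N}$ admit $r_n \notin \mathfrak{m}_R^N$ for $n \in S$. The crucial intermediate fact is a detectability statement: there exist finitely many $\xi_1, \ldots, \xi_m$ (with $\xi_i : R \to \O_{F_i}$) and integers $M_1, \ldots, M_m$ such that
\[
R/\mathfrak{m}_R^N \hookrightarrow \prod_{i=1}^{m} \O_{F_i}/\mathfrak{m}_{F_i}^{M_i}
\]
is injective. Granted this, the hypothesis $\xi_i(r_n) \to 0$ in $\mathfrak{m}_{F_i}$-topology gives $\xi_i(r_n) \in \mathfrak{m}_{F_i}^{M_i}$ for all $i$ and $n$ sufficiently large, whence $r_n \in \mathfrak{m}_R^N$ for such $n$, contradicting $n \in S$.

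\textbf{The obstacle.} The main difficulty is establishing the detectability statement. Since $\mathfrak{m}_R^N \supseteq p^N R$, for $r \notin \mathfrak{m}_R^N$ one has $r \notin p^N R$, so Proposition~\ref{dejong} applied to $r/p^N \in R[1/p] \setminus R$ supplies some $\xi$ with $\xi(r) \notin p^N \O_F$. Upgrading this to a statement about $\mathfrak{m}_F^{M}$-non-membership with $M$ uniformly bounded in terms of $N$, and passing from the infinite collection of all $\xi$'s to a finite detecting subfamily, uses that $R/\mathfrak{m}_R^N$ has finite length as a $W(k)$-module: the descending chain of intersections $\bigcap \ker(\xi \bmod \mathfrak{m}_F^{M})$ over growing finite collections of $\xi$'s stabilizes at a finite stage, yielding the required finite family.
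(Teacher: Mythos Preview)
Your easy inclusion and reduction to the sequence statement are exactly what the paper does: it too reduces to showing that if $(r_n)\subset\mathfrak m_R$ has $\xi(r_n)\to 0$ for every $\xi$, then $r_n\to 0$ $\mathfrak m_R$-adically.

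The gap is in your ``Main step''. Your detectability statement would finish the argument, but your justification does not establish it. From Proposition~\ref{dejong} you obtain, for $r\notin\mathfrak m_R^N$, only some $\xi$ with $\xi(r)\notin p^N\O_F=\mathfrak m_F^{e_FN}$; since $e_F$ is unbounded, this does not give $\xi(r)\notin\mathfrak m_F^M$ with $M$ controlled by $N$. More seriously, your Artinian descent needs the intersection of the images $\bar I_{\xi,M}:=(\xi^{-1}(\mathfrak m_F^M)+\mathfrak m_R^N)/\mathfrak m_R^N$ in $R/\mathfrak m_R^N$ to be zero. But knowing $\bigcap_\xi\ker\xi=0$ (or even $\bigcap_\xi\xi^{-1}(p^N\O_F)=p^NR$) does not give this: intersection does not commute with adding $\mathfrak m_R^N$, and the well-defined quotient map $R/p^NR\hookrightarrow\prod_\xi\O_F/p^N\O_F$ lives over a ring that is not Artinian. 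So the finite-length argument does not close as written; the detectability claim, while plausible, is essentially as hard as the proposition itself.

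The paper proceeds quite differently. It first reduces to the regular case $R_0=W(k)\lps t_1,\ldots,t_r\rps$ via a finite injective $R_0\hookrightarrow R$: for $f_n\in R$ one takes a degree-$d$ characteristic equation $P(T;f_n)=0$ with coefficients in $R_0$, observes the Galois conjugates satisfy the same pointwise-convergence hypothesis, and deduces the symmetric functions (hence $f_n^d$, hence $f_n$, via Chevalley) tend to $0$. For $R_0$ the argument is geometric: after passing to a subsequence with $f_n\in\mathfrak m^N\setminus\mathfrak m^{N+1}$, blow up the closed point and note the proper transforms $Z(f_n)$ meet the exceptional $\mathbb P^r_k$ in hypersurfaces of fixed degree $N$; a pigeonhole on degrees produces a $\bar k$-point $x$ of $\mathbb P^r_k$ avoiding all $Z(f_n)$, and any lift $\tilde x\in\tilde Y(\O_F)$ gives a single $\xi$ for which the valuation of $\xi(f_n)$ is bounded (it equals the multiplicity along the exceptional divisor), contradicting $\xi(f_n)\to 0$. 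The key conceptual difference is that the paper produces one $\xi$ witnessing infinitely many $f_n$ directly, rather than trying to detect $\mathfrak m_R^N$ by a fixed finite family of $\xi$'s.
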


\begin{proof}
This follows from the definitions and:

\begin{prop}\label{limitProp} Suppose that $(f_n)_n$ is a sequence of elements of the maximal ideal $\frakm_R$ such that, for every finite extension $F$ of $W(k)[1/p]$ and every $W(k)$-algebra homomorphism $\xi: R\to \O_F$, the sequence
$(\xi(f_n))_n$ converges to $0$ in the $p$-adic topology of $F$. Then $(f_n)_n$ converges to $0$ in the $\frakm_R$-topology.
\end{prop}

\begin{proof}
Under our assumption on $R$, there is a finite injective ring homomorphism 
\[
\phi^*: R_0=W(k)\lps t_1,\ldots, t_r\rps\hookrightarrow R.
\]
(See \cite[p. 212]{Mats}, also \cite[proof of Lemma 7.3.5]{deJongCrys}.) We will use this to reduce the proof to the case $R=R_0$. Let $d$ be the degree of the  extension ${\rm Frac}(R)/{\rm Frac}(R_0)$ of fraction fields. For $f\in R$, we let $P(T; f)$ be the irreducible polynomial of $f$ over ${\rm Frac}(R_0)$;  this has degree $d'\leq d$. Since $R$, $R_0$ are both normal and $\phi^*$ is integral, we see that $P(T;  f)$ has coefficients in $R_0$.

Assume that $(f_n)_n$ is a sequence of elements in $\frakm_R$ that satisfies the assumption of the proposition. Fix a finite Galois extension $F'$ of ${\rm Frac}(R_0)$ that contains ${\rm Frac}(R)$ and  let $R'$ be the integral closure of $R_0$ in $F'$ so that $R_0\subset R\subset R'$. Then $R'$ is finite over $R_0$ (\cite[(31.B)]{Mats}).
For each $n$ we can write
\[
P(T; f_n)=\prod_{i=1}^{d'_n}(T-f_{n, i})
\]
with $f_{n, i}\in R'$ and $f_{n, 0}=f_n$. The elements $f_{n, i}$ are Galois conjugates of $f_n$. Every  $\xi: R\to \O_F$ as in 
the statement of the proposition extends to $\xi': R'\to \O_{F'}$, where $F'/F$ is finite, and the valuations of $\xi'(f_{n, i})$ agree with that of $\xi(f_n)$. It follows that for every $\xi': R'\to \O_{F'}$, the total sequence $(\xi'(f_{n, i}))_{n, i}$ goes to $0$.
Hence, the assumption of the proposition 
is satisfied for the sequence of the symmetric functions $(a_i(f_n))_n$ in $R_0$, for each $i$. These give the coefficients of the $P(T; f_n)$ and $f_n$ is a root 
of $T^{d-d'_n}P(T; f_{n})$ which we can write
\[
T^{d-d'_n}P(T; f_{n})=T^d+b_{1, n}T^{d-1}+\cdots +b_{d-1, n}T+b_{d, n}, \qquad b_{i, n} \in R_0.
\]
 Suppose now that we know
that the proposition is true for $R_0$. Then, we obtain that $b_{i, n}$ converges to $0$ in the $\frakm_{R_0}$-topology.
The identity $f^d_n+b_{1, n}f^{d-1}_n+\cdots +b_{d-1, n}f_n+b_{d, n}=0$ implies that $f_n^d$ converges to $0$ in the $\frak m_R$-topology. For $f\in R$, consider the sequence of ideals 
\[
\cdots \subset I_{a+1}=(\frakm_R^{a+1}; f)\subset I_a=(\frakm_R^a; f)\subset \cdots 
\]
of $R$. Krull's intersection theorem implies $\cap_{a=0}^{\infty}I_a=(0)$ and so, by Chevalley's lemma, the ideals $(I_a)_a$ also define the 
$\frakm_{R}$-topology of $R$. Since $f_n^d\in \frakm_{R}^a$ implies $f_n^{d-1}\in (\frakm_R^a; f_n)$, we quickly obtain, by decreasing induction on $d$, that $f_n$ converges to $0$ in the $\frak m_R$-topology. 

It remains to prove the proposition for the power series ring $R_0$:

 Set $Y=\Spec(R_0)$, and let $h: \ti Y\to Y$ be the blowup of $Y$ at the maximal ideal $\frakm=(p, t_1,\ldots, t_r)$. The exceptional divisor $E$ can be identified with ${\mathbb P}(\frakm/\frakm^2)\cong {\mathbb P}^r_k$. We argue by contradiction: Assume that $(f_n)_n$ is a sequence of elements in $\frakm$ that satisfies the assumption of the proposition but is such that $(f_n)_n$ does not converge to $0$ in the $\frakm$-topology. Then, by replacing $(f_n)_n$ by a subsequence, we can assume that, there is an integer $N\geq 1$, such that  $f_n\in \frakm^N-\frakm^{N+1}$, for all $n$. 
 Write
 \[
 f_n=\sum_{i\geq N} f_{n, i}, \qquad f_{n, i}=\sum_{a_0+\cdots +a_r=i}[c_{a_0a_1\cdots a_r}(n)]p^{a_0}t_1^{a_1}\cdots t^{a_r}_r,
 \]
 with $c_{a_0a_1\cdots a_r}(n)\in k$.
 %as a series of its homogenous polynomial parts $f_{n, i}\in W(k)[t_1,\ldots ,t_r]$ of degree $i$.
 Then the proper transform $Z(f_n)\subset \ti Y$ of $f_n=0$ intersects the exceptional divisor $E={\mathbb P}^r_k$ along the hypersurface $S_n\subset {\mathbb P}^r_k={\rm Proj}_k(k[u_0,\ldots, u_r])$ of degree $N$ defined by the homogeneous equation
 \begin{equation}\label{HypersurfaceEq}
\sum_{a_0+\cdots +a_r=N} c_{a_0a_1\cdots a_r}(n)u_0^{a_0}u_1^{a_1}\cdots u^{a_r}_r=0.
 \end{equation}
\begin{lemma}  After replacing $(f_n)_n$  by a subsequence,  we can find a $\bar k$-valued point $x$ of the exceptional divisor ${\mathbb P}^r_k$  which does not lie on any of the proper transforms $Z(f_n)$ of $f_n=0$, for all $n$.
\end{lemma}

\begin{proof}  We argue by contradiction: Assume that for any given point $x\in  {\mathbb P}^r(\bar k)$ and almost all $n$, $Z(f_n)$ contains $x$. Then, also for every finite set of points $A(m)=\{x_1,\ldots , x_m\}$, we have $A(m)\subset Z(f_n)$, for almost all $n$.
Since $Z(f_n)\cap {\mathbb P}^r$ is, for each $n$, a hypersurface $S_n$ of fixed degree $N$, when $m\gg N$ this is not possible.
\end{proof}
\smallskip

Now choose $x$ as given by the lemma and lift it 
to a point $\ti x\in \ti Y(\O_{F})$, where $F$ is some finite extension of $
W(k)_\Q$; this induces an $O_F$-point of $Y$ by $\ti Y(O_F)\to Y(O_F)$ and we also denote this by $\ti x$.
 By assumption, $\ti x^*(f_n)\to 0$ in $F$. By our choice, the image of $\ti x: \Spec(O_F)\to \ti Y$ intersects the exceptional divisor $E={\mathbb P}^r_k$ away from the hypersurface $S_n$.
Using this and equation (\ref{HypersurfaceEq}) which cuts out $S_n$, we obtain 
\[
v(\ti x^*(f_{n, N}))=N\cdot {\rm min}_{i=0}^r \{\ti x^*(t_i)\},
\]
where $v$ is the $p$-adic valuation and, for uniformity, we denote $p$ by $t_0$. 
Since $v(\ti x^*(f_{n, i}))\geq (N+1)\cdot {\rm min}_{i=0}^r \{\ti x^*(t_i)\}$ for $i\geq N+1$, 
we also have $v(\ti x^*(f_{n}))=N\cdot {\rm min}_{i=0}^r \{\ti x^*(t_i)\}$, which contradicts $v(\ti x^*(f_n))\to +\infty$.
%this implies that the valuation  of $h^*(f_n)$ along the exceptional 
%divisor grows without bound. 
%This contradicts $f_n\not\in \frakm^{N+1}$.
\end{proof}\end{proof}
\end{para}
    
   \subsection{Some perfectoid rings}\label{appCompl}
We  assume that the $\Z_p$-algebra $R$ satisfies (CN) with $k=\bar{\mathbb F}_p$. 

  \begin{para} Fix an algebraic closure $\overline{ F(R)}$ of the fraction field $F(R)={\rm Frac}(R)$. Denote by $\ti R$
  the union of all finite normal $R$-algebras $R'$ such that:
  
\begin{itemize}
\item[1)] $R\subset R'\subset \overline{ F(R)}$, and

\item[2)] $R'[1/p]$ is finite \'etale over $R[1/p]$.
\end{itemize}

Note that all such $R'$ are local and complete. We will denote by $\bar R$ the integral closure
of $R$ in $\overline{ F(R)}$, so that $\bar R$ is the union of all $R'$ as in (1).

Let us set
\[
\Gamma_R={\rm Gal}(\ti R[1/p]/R[1/p]),
\]
which acts on $\ti R$.
 Also denote by
  \[
  \ti R^\wedge=\varprojlim\nolimits_n \ti R/p^n\ti R,\quad  \ti R^\wedge=\varprojlim\nolimits_n \bar R/p^n\bar R,
  \]
   the $p$-adic completions. 
   
    \end{para}
  
  \begin{para}  When $R=W=W(k)$, we denote ${\bar W}^\wedge={\ti W}^\wedge$ by $\O$.

 \begin{prop}\label{prop131}
 The natural maps $\bar R\to \bar R^\wedge$, $\ti R\to \ti R^\wedge$, are injections and induce
  isomorphisms $\bar R/p^n\bar R\simeq \bar R^\wedge/p^n\bar R^\wedge$, $\ti R/p^n\ti R\simeq \ti R^\wedge/p^n\ti R^\wedge$, 
 for all $n\geq 1$.
 \end{prop}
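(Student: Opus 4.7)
The plan is to prove the two assertions—injectivity and the mod-$p^n$ isomorphism—independently; in each case the argument for $\bar R$ applies verbatim to $\ti R \subset \bar R$.

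For injectivity, I need to show $\bigcap_n p^n \bar R = 0$. Pick a height-one prime $\frakp$ of $R$ containing $p$; since $R$ is normal, $R_\frakp$ is a DVR, and its valuation $v$ on $F(R)$ satisfies $v(p) > 0$. Extend $v$ arbitrarily to a valuation $\ti v$ on $\overline{F(R)} = F(\bar R)$. Elements of $\bar R$ are integral over $R$, so standard valuation theory gives $\ti v(y) \geq 0$ for all $y \in \bar R$. If $x \in \bigcap_n p^n \bar R$, writing $x = p^n y_n$ with $y_n \in \bar R$ yields $\ti v(x) \geq n\,\ti v(p)$ for all $n$. The value group of $\ti v$ sits inside the divisible hull of $v(F(R)^\times) = \Z$, hence inside $\Q$, so $\ti v(p) > 0$ forces $\ti v(x) = \infty$ and thus $x = 0$.

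For the mod-$p^n$ isomorphism, I would invoke the general algebraic fact that for any $p$-torsion-free ring $A$ the natural map $A/p^n A \to A^\wedge/p^n A^\wedge$ is an isomorphism, where $A^\wedge := \varprojlim_m A/p^m A$. Both $\bar R$ and $\ti R$ are domains of characteristic zero, hence $p$-torsion free. Surjectivity is immediate from the projection $A^\wedge \to A/p^n A$. For injectivity one must show $\ker(A^\wedge \to A/p^n A) = p^n A^\wedge$: given a compatible sequence $(a_m) \in A^\wedge$ with $a_n = 0$, the coherence conditions force $a_m \in p^n A / p^m A$ for all $m \geq n$. Since $p$-torsion-freeness makes $p^n \colon A/p^l A \to p^n A/p^{l+n} A$ an isomorphism, there is a unique $b_l \in A/p^l A$ with $p^n b_l = a_{l+n}$ for each $l$, and coherence of $(b_l)$ follows by reducing the defining relation modulo $p^{l+n}$ and using injectivity of multiplication by $p^n$ once more. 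Hence $p^n (b_l) = (a_m)$ in $A^\wedge$.

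The one delicate step is the valuation-theoretic part of the first half: producing an extension of the DVR valuation on $F(R)$ to $\overline{F(R)}$ whose value group lies in $\Q$, is nonnegative on $\bar R$, and strictly positive on $p$. This is standard: any valuation ring of $F(R)$ extends by going-up to a valuation ring of any algebraic extension, and the value group of such an extension is contained in the divisible hull of the original value group. Everything else in the argument is formal manipulation of inverse systems.
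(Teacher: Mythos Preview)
Your proof is correct. The paper itself gives no argument at all: it simply cites \cite[Prop.~2.0.3]{Brinon} for $\ti R$ and remarks that $\bar R$ is similar. So you have supplied a direct, self-contained proof where the paper defers to the literature. The valuation-theoretic argument for $\bigcap_n p^n\bar R=0$ is a clean way to handle the non-Noetherian situation, and the mod-$p^n$ statement is indeed the standard fact you identify.

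One small expository point: in the second half, your labels ``surjectivity'' and ``injectivity'' appear to refer to the induced map $A^\wedge/p^nA^\wedge\to A/p^nA$ coming from the projection $\pi:A^\wedge\to A/p^nA$, rather than to the stated map $f:A/p^nA\to A^\wedge/p^nA^\wedge$. Surjectivity of the former is what follows immediately from $\pi$, and the kernel computation $\ker\pi=p^nA^\wedge$ gives its injectivity; since $f$ is a section of this map, both are isomorphisms. The mathematics is fine, but a reader may be momentarily confused by the phrasing. (As an aside, the isomorphism $A/p^nA\simeq A^\wedge/p^nA^\wedge$ actually holds for any ring $A$ and any finitely generated ideal, without $p$-torsion-freeness; but your argument via torsion-freeness is perfectly valid for the rings at hand.)
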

 
 \begin{proof}
This is given by the argument in \cite[Prop. 2.0.3]{Brinon} which deals with the case of $\ti R$ and the case of $\bar R$ is similar.
 \end{proof}
 
 \begin{prop}\label{prop132} Let $S$ be $\ti R^\wedge$ or $\bar R^\wedge$.
 
 a)  $S$ is $p$-adically complete and separated and  is flat over $\Z_p$.

 b)   $S$ is an integral perfectoid algebra (in the sense of \cite[3.2]{BMS}). 
 
 c) $S$ is local and strict henselian.
 \end{prop}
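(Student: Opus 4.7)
My plan is to prove (a), (b), (c) in turn, using the identification $S/p^n S \simeq \bar R/p^n \bar R$ (resp.\ $\ti R/p^n \ti R$) from Proposition~\ref{prop131} as the main input. For (a), $p$-adic completeness and separatedness of $S$ are immediate from its construction as a $p$-adic completion. For $\Z_p$-flatness, I observe that $\bar R$ and $\ti R$, being subrings of $\overline{F(R)}$, are integral domains of characteristic zero and hence $p$-torsion free. To transfer this to $S$, I would argue directly: if $x=(x_n)_n\in\varprojlim_n \bar R/p^n\bar R$ satisfies $px=0$, then any lift $\tilde x_n\in\bar R$ of $x_n$ has $p\tilde x_n\in p^n\bar R$; $p$-torsion freeness of $\bar R$ yields $\tilde x_n\in p^{n-1}\bar R$, and the compatibility $x_n\equiv x_{n+1}\pmod{p^n}$ then propagates to give $x_n=0$ for all $n$.

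For (b), my plan is to verify the characterization of integral perfectoid algebras from \cite[\S 3.2]{BMS}: $p$-adic completeness (done), the existence of $\pi\in S$ with $\pi^p$ dividing $p$, surjectivity of Frobenius on $S/pS$, and principality of $\ker\theta$ for Fontaine's map $\theta\colon A_{\inf}(S)\to S$. In the case $S=\bar R^\wedge$, the element $p^{1/p}\in\overline{F(R)}$ is integral over $R$ and so lies in $\bar R$; likewise any class $\bar x\in \bar R/p\bar R$ lifts to $x\in\bar R$ whose $p$-th root in $\overline{F(R)}$ is integral over $\bar R$, hence lies in $\bar R$. This makes $\bar R/p\bar R$ semiperfect and supplies the required pseudo-uniformizer; principality of $\ker\theta$ then follows from a compatible system of $p$-power roots of $p$ via the standard criterion in \cite{BMS}. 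For $S=\ti R^\wedge$ the same argument requires that the necessary $p$-power root extensions already lie inside $\ti R$, i.e.\ are \'etale over $R[1/p]$; I would either exhibit the required towers explicitly in the spirit of Faltings' almost purity arguments, following \cite{Brinon}, or reduce to the $\bar R^\wedge$ case by a suitable (almost) faithfully flat descent. I expect this case to be the main obstacle.

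For (c), I note that $\bar R$ is the filtered union of finite normal local $R$-algebras $R'$: each such $R'$ is complete local (being finite over the complete local $R$), with local transition maps, so $\bar R$ is itself local, as is $\ti R$. Since $S/pS\simeq \bar R/p\bar R$ is local, $S$ is local, with residue field equal to that of $\bar R$. The residue field of every finite extension $R'\supset R$ is algebraic over $k=\bar{\mathbb F}_p$ and hence equal to $k$, so $S$ has residue field $\bar{\mathbb F}_p$, which is separably closed. Finally, $(S,(p))$ is a henselian pair by $p$-adic completeness of $S$, while $S/pS$ is a filtered colimit of henselian local rings with local transition maps and is therefore henselian; by the standard lifting of henselianness from a quotient along a henselian pair, $S$ is henselian, and thus strictly henselian.
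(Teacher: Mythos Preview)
Your arguments for (a) and (c) are correct and essentially match the paper's (the paper defers (a) to \cite[Prop.~2.0.3]{Brinon}, and for (c) reduces to showing that $S/pS\simeq\ti R/p\ti R$ is local strictly henselian, which holds since $\ti R$ is). For (b) with $S=\bar R^\wedge$ your argument is also fine.

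The gap is in (b) for $S=\ti R^\wedge$. You correctly identify the obstacle: to get Frobenius surjective on $\ti R/p\ti R$ one must, for each $a\in\ti R$, produce $b\in\ti R$ with $b^p\equiv a\pmod p$, and one cannot simply adjoin $a^{1/p}$ since $R'[a^{1/p}]$ need not be \'etale over $R'[1/p]$ when $a$ is not a unit. But you do not actually close this gap: gesturing at ``Faltings' almost purity'' or ``faithfully flat descent from $\bar R^\wedge$'' is too vague here, and neither route obviously delivers the statement without further work. The paper resolves this with a short explicit trick. For $a\in R'\subset\ti R$ set
\[
R''=R'[X]/(X^{p^2}-pX-a).
\]
The derivative $p(pX^{p^2-1}-1)$ is a unit in $R''[1/p]$, so $R''[1/p]$ is \'etale over $R'[1/p]$; hence the image $b$ of $X$ in $\overline{F(R)}$ lies in $\ti R$ and satisfies $b^{p^2}\equiv a\pmod{p\ti R}$, which gives the required surjectivity. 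The paper then invokes \cite[Lemma~3.10]{BMS} in the form ``$\pi$ a non-zero-divisor and $\phi:S/\pi S\to S/pS$ bijective'' (injectivity coming from normality of the $R'$: if $x^p=py$ then $(x/\pi)^p=y$ lies in $\ti R$), rather than verifying principality of $\ker\theta$ directly as you propose. This explicit polynomial is the concrete idea missing from your plan.
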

  
  \begin{proof}
  
  Part (a) is also given by \cite[Prop. 2.0.3]{Brinon}. 
   Let us show (b) for $S=\ti R^\wedge$. The argument for $\bar R^\wedge$ is similar and actually simpler. 
  We see that $\ti R$ and so $S$, contains an element $\pi$ with $\pi^p=p$. Then $S$ is   $\pi$-adically complete. Using
 \cite[Lemma 3.10]{BMS}, it is now enough to show that the Frobenius $\phi: S/\pi S\to S/p S$ is an isomorphism and that $\pi$ is not a zero divisor in $S$. Since $S$ is $\Z_p$-flat, $\pi$ is not a zero divisor. By Proposition \ref{prop131}, we
  have $\ti R/p\ti R\simeq S/pS$ and similarly $\ti R/\pi\ti R\simeq S/\pi S$. Hence, it is enough to show that
  $\phi: \ti R/\pi \ti R\to \ti R/p \ti R$ is an isomorphism. Suppose now $x\in \ti R$ satisfies $x^p=p y$ with $y\in \ti R$. Then $(x/\pi)^p=y$ and since $\ti R$ is a union of normal domains, we have $x/\pi=z\in \ti R$. This shows injectivity. To show surjectivity, consider $a\in R'\subset \ti R$ and consider 
  \[
  R''=R'[X]/(X^{p^2}-p X-a).
   \] 
  This is a finite $R$-algebra, and so also $p$-adically complete. It is \'etale over $R'[1/p]$ since, by $p$-adic completeness, the derivative  $p(pX^{p^2-1}-1)$ is a unit in $R''[1/p]$. Now there is $R''\to \overline{ F(R)}$ that extends $R'\subset \overline{ F(R)}$ and the image $b$ of $X$  in $\overline{ F(R)}$ is contained in a finite $R'$-algebra which is also \'etale over $R'[1/p]$. This gives $b\in \ti R$ with $b^{p^2}\equiv a\ {\rm mod\ } p\ti R$ which implies surjectivity. 
   
For part (c), since $S=\ti R^\wedge$ is $p$-adically complete, it is enough to show that these properties are true for $S/pS\simeq \ti R/p\ti R$. We can see that $\ti R$ is both local and strict henselian, and then so is the quotient $\ti R/p\ti R$. The argument for $\bar R^\wedge$ is similar.
\end{proof}

 \begin{thm}\label{FaltingsAlmost}
The action of $\Gamma_R$ on $\ti R$ extends to a
$p$-adically continuous action on $\ti R^\wedge$ and we have
\[
(\ti R^\wedge)^{\Gamma_R}=R.
\]
\end{thm}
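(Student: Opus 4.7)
I would organize the proof in three parts: continuity of the extended action, the easy inclusion, and the hard inclusion.

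\emph{Continuity and easy inclusion.} Since $\Gamma_R$ acts by $\Z_p$-algebra automorphisms of $\ti R$, it preserves the ideals $p^n\ti R$ and descends to compatible actions on the quotients $\ti R/p^n\ti R$. Passing to the inverse limit, which by Proposition \ref{prop131} is $\ti R^\wedge$, yields a $p$-adically continuous action of $\Gamma_R$ on $\ti R^\wedge$. The inclusion $R \subseteq (\ti R^\wedge)^{\Gamma_R}$ is then immediate: $R \subseteq \ti R^{\Gamma_R}$, and $R$ is already $p$-adically complete and separated (since $p\in\calA$ and $R$ is $\calA$-adically complete), so $R$ injects into $\ti R^\wedge$ with image in the invariants.

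\emph{Reverse inclusion, Step 1.} I first aim to establish the rational version
\[
(\ti R^\wedge[1/p])^{\Gamma_R} = R[1/p].
\]
Write $\ti R$ as the filtered union of finite normal $R$-subalgebras $R'\subseteq\overline{F(R)}$ such that $R'[1/p]/R[1/p]$ is finite Galois \'etale with group $H_{R'}$. On the generic fiber, $R'[1/p]^{H_{R'}}=R[1/p]$, and the normalized trace $\tfrac{1}{|H_{R'}|}\Tr_{R'/R}$ splits the inclusion $R[1/p]\hookrightarrow R'[1/p]$. Given a $\Gamma_R$-invariant $f\in\ti R^\wedge[1/p]$, I would apply such normalized traces to mod-$p^n$ approximations of $f$ coming from finite levels $R'$; because $R'[1/p]/R[1/p]$ is \'etale, the trace is almost integral in the sense of Faltings, and the $p$-power denominators can be controlled uniformly using the integral perfectoid structure of $\ti R^\wedge$ (Proposition \ref{prop132}). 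Passing to the limit over the tower produces an element of $R[1/p]$ equal to $f$.

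\emph{Reverse inclusion, Step 2.} Now let $f \in (\ti R^\wedge)^{\Gamma_R}$, so $f \in R[1/p]$ by Step 1. I verify the criterion of Proposition \ref{dejong}: for every $W(k)$-algebra homomorphism $\xi: R \to \O_F$ with $F/W(k)[1/p]$ finite, one needs $\xi(f) \in \O_F$. Since $\ti R[1/p]$ is an ind-\'etale $R[1/p]$-algebra and $\O_{\bar F}$ is strictly henselian, $\xi$ lifts to an $R$-algebra map $\ti\xi: \ti R \to \O_{\bar F}$, which extends by $p$-adic completion to $\ti\xi: \ti R^\wedge \to \O_{\bbC_p}$. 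For $\sigma \in G_F := \Gal(\bar F/F)$, the composition $\sigma\circ\ti\xi$ is another lift of $\xi$, hence equals $\ti\xi\circ\gamma$ for some $\gamma \in \Gamma_R$; invariance of $f$ yields
\[
\sigma(\ti\xi(f)) = \ti\xi(\gamma(f)) = \ti\xi(f).
\]
Therefore $\ti\xi(f) \in \O_{\bbC_p}^{G_F} = \O_F$ by the integral Ax--Sen--Tate theorem. Identifying $\ti\xi(f)$ with $\xi(f) \in F$ under $F\hookrightarrow\bbC_p$, Proposition \ref{dejong} gives $f\in R$.

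\emph{Main obstacle.} The hard part is Step 1: the trace splitting is transparent at each finite level, but making the projections converge through $p$-adic completion and pass to the limit over the Galois tower requires controlling $p$-power denominators uniformly. This is precisely where the integral perfectoid property of $\ti R^\wedge$ (Proposition \ref{prop132}) and Faltings-style almost purity arguments enter; once Step 1 is in hand, Step 2 is a formal consequence of Proposition \ref{dejong} combined with Tate's theorem for the finite extensions $F/W(k)[1/p]$.
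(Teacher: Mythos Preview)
Your approach matches the paper's: first establish the rational statement $(\ti R^\wedge)^{\Gamma_R}\subset R[1/p]$, then upgrade to integrality via Proposition~\ref{dejong} and specialization to $\O_F$-points. Two remarks. First, the paper does not carry out your Step~1 but simply cites the inclusion $R\subset(\ti R^\wedge)^{\Gamma_R}\subset R[1/p]$ from Faltings \cite{Fa} or Brinon \cite[Prop.~3.1.8]{Brinon}; your sketch gestures at the right almost-purity input but is not itself a proof, so you may as well cite it too. Second, your Step~2 is correct but can be shortened: once $f\in R[1/p]\cap\ti R^\wedge$, for any $\xi:R\to\O_F$ one already has $\xi(f)\in F$, and the completed extension $\bar\xi^\wedge:\ti R^\wedge\to\O$ sends $f$ into $\O$, so $\xi(f)\in F\cap\O=\O_F$ directly---no Galois equivariance or Ax--Sen--Tate is needed.
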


\begin{proof}
 By Faltings \cite{Fa} or \cite[Prop. 3.1.8]{Brinon}, we have
 \[
 R\subset (\ti R^\wedge)^{\Gamma_R}\subset R[1/p].
 \]
 Using this, we see that it remains to show that $\ti R^\wedge\cap R[1/p]=R$, with the intersection in $\ti R^\wedge[1/p]$.
 Suppose $f\in \ti R^\wedge\cap R[1/p]$. By applying Proposition \ref{dejong}, we see that it is enough to show 
 that $\xi(f)\in \O_F$, for all $W$-algebra homomorphisms $\xi: R\to \O_F$ with $F$ a finite extension of 
 $W[1/p]$. Choose such a $\xi: R\to \O_F$. We can extend $\xi$ to $\bar \xi: \ti R\to \bar\O_F=\O_{\bar F}$ and then by $p$-adic completion to 
 \[
 {\bar \xi}^\wedge: \ti R^\wedge\to \O.
 \]
  This gives  ${\bar \xi}^\wedge: \ti R^\wedge[1/p]\to \O[1/p]$. But then
 $\bar\xi^\wedge(f)\in F\cap \O=\O_F$.
 \end{proof}
 
  \end{para}

 \subsection{Period rings}\label{appAinf}
 
 We  continue with the same assumptions and notations. In particular, $R$ satisfies (CN) with $k=\bar{\mathbb F}_p$. 
 
 \begin{para}
We now restrict to the case $S=\ti R^\wedge$.
 We will use the notations of \cite[\S 3]{BMS}. Consider the tilt 
 \[
 S^\flat=\varprojlim\nolimits_\phi S/pS=\varprojlim\nolimits_\phi S
 \]
 and similarly for $\O^\flat$. 
 
 \begin{lemma}\label{localS}
  The ring $S^\flat$ is local strict henselian with residue field $k$. 
 \end{lemma}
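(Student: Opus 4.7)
The plan is to deduce locality with residue field $k$ and strict henselian-ness of $S^\flat$ from the corresponding properties of $S=\ti R^\wedge$ established in Proposition \ref{prop132}, via the standard tilting dictionary.

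For locality, I consider the zeroth-coordinate projection $S^\flat\to S/pS$, $(x_0,x_1,\ldots)\mapsto x_0$. Since $S$ is integral perfectoid, Frobenius is surjective on $S/pS$, so this projection is itself surjective. The ring $S/pS\cong\ti R/p\ti R$ is local with residue field $k$, directly inherited from the local strict henselian structure of $\ti R$ established in the proof of Proposition \ref{prop132}(c). Composing with the residue map yields a surjection $S^\flat\twoheadrightarrow k$; call its kernel $\mathfrak m^\flat$. If $(x_n)_n\notin\mathfrak m^\flat$ then $\bar x_0\neq 0$, so $x_0\in S/pS$ is a unit; since $x_n^{p^n}=x_0$, each $x_n$ is a unit in $S/pS$, and the sequence $(x_n^{-1})_n\in S^\flat$ inverts $(x_n)_n$. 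Hence $S^\flat$ is local with maximal ideal $\mathfrak m^\flat$ and residue field $k$.

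For strict henselian-ness, I would use the element $\pi\in\ti R$ with $\pi^p=p$ produced in the proof of Proposition \ref{prop132} via the étale extension $R'[X]/(X^{p^2}-p)$, and iterate the same construction to obtain a compatible system $\pi=\pi_0,\pi_1,\pi_2,\ldots\in\ti R$ with $\pi_{n+1}^p=\pi_n$. The resulting element $\pi^\flat:=(\bar\pi,\bar\pi_1,\bar\pi_2,\ldots)\in S^\flat$ is a pseudo-uniformizer with $(\pi^\flat)^\#=\pi$, and by \cite[Lemma~3.10]{BMS} the tilt $S^\flat$ is $\pi^\flat$-adically complete with $S^\flat/\pi^\flat S^\flat\cong S/\pi S$. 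Since $S$ is strict henselian (Proposition \ref{prop132}(c)) and $\pi\in\mathfrak m_S$, the quotient $S/\pi S$ inherits the strict henselian property: a coprime factorization over $k$ of a monic polynomial lifts to $S$ by Hensel, and then reduces modulo $\pi$. Thus $S^\flat/\pi^\flat S^\flat$ is strict henselian, and a coprime factorization over $k$ of a monic $f\in S^\flat[T]$ lifts first to $(S^\flat/\pi^\flat S^\flat)[T]$ by that strict henselian property, and then to $S^\flat[T]$ by $\pi^\flat$-adic Hensel's lemma, using the completeness of $S^\flat$. The main subtlety is the perfectoid input $S^\flat/\pi^\flat\cong S/\pi$ together with the $\pi^\flat$-adic completeness of $S^\flat$; both rely on the compatible system of $p$-power roots of $\pi$ within $\ti R$, not merely on $p$-adic completeness of $S$.
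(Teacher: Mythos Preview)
Your argument is correct, and the locality paragraph is essentially identical to the paper's. For the henselian property, however, the paper takes a different and more elementary route: it works directly with the presentation $S^\flat=\varprojlim_\phi S/pS$. Given a monic $f\in S^\flat[T]$ with a simple root $\kappa=(\kappa_0,\kappa_1,\ldots)$ in $k=\varprojlim_\phi k$, one projects $f$ to each coordinate to obtain $f_i\in (S/pS)[T]$, whose reduction has $\kappa_i=\kappa_0^{1/p^i}$ as a simple root. Since $S/pS$ is henselian, each $\kappa_i$ lifts uniquely to a root $a_i\in S/pS$; applying Frobenius to $f_{i+1}(a_{i+1})=0$ gives $f_i(a_{i+1}^p)=0$, so uniqueness forces $a_{i+1}^p=a_i$, and $(a_i)_i\in S^\flat$ is the desired root. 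This avoids constructing the compatible system $(\pi_n)$, the pseudo-uniformizer $\pi^\flat$, the tilting isomorphism $S^\flat/\pi^\flat\cong S/\pi$, and the $\pi^\flat$-adic completeness of $S^\flat$---all of which are true but require more perfectoid input than the bare definition. Your approach has the virtue of being a clean instance of the general principle that henselianness passes through adic completions and tilting, while the paper's is self-contained and uses nothing beyond the inverse-limit description of $S^\flat$ and Hensel for $S/pS$.
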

 
\begin{proof} As we have seen, the rings $S$ and $S/pS=\ti R/p\ti R$ are local and strict henselian with residue field $k$. Denote by $x\mapsto \bar x$
the map $S/pS\to k$.
The Frobenius $S/pS\to S/pS$ is surjective and, hence, $S^\flat\to S/pS$ is surjective. If 
$
x=(x_0, x_1, x_2, \ldots )\in S^\flat
$
with $x_i\in S/pS$, $x_{i+1}^p=x_i$, has $x_0$ a unit, then all $x_i$ and also $x$ are units. Hence, $S^\flat$ is local with residue field $k$ and $(x_0, x_1, x_2, \ldots )\mapsto (\bar x_0, \bar x_1, \bar x_2, \ldots )$ is the residue field map $S^\flat\to k$.
Now consider $f(T)\in S^\flat[T]$ with a simple root $\kappa=(\kappa_0, \kappa_1, \kappa_2, \ldots )$ in $k$, with $\kappa_i=\sqrt[p^i]{\kappa_0}$. Since $S/pS$ is local henselian, the simple root $\kappa_i\in k$ of $f_i(T)$ lifts uniquely to a root $a_i\in S/pS$.
By uniqueness, we have $a_{i+1}^p=a_i$ and so $a=(a_0, a_1,\ldots )$ is a root in $S^\flat$ that lifts $\kappa$. Hence, $S^\flat$ satisfies Hensel's lemma.
\end{proof}
\end{para}

\begin{para}\label{thetanotations}
 
 We set $A_{\rm inf}(S)=W(S^\flat)$ for Fontaine's ring. By \cite[Lemma 3.2]{BMS}, we have
\[
A_{\inf}(S)\cong\varprojlim\nolimits_{\phi} W_r(S).
\]
 This gives corresponding homomorphisms 
 \[
 \tilde \theta_r: A_\inf(S)\to W_r(S).
 \]
 We also have the  standard  homomorphism of $p$-adic Hodge theory
 \[
 \theta:  A_\inf(S)\to S,
 \]
 given by
 \[
 \theta(\sum\nolimits_{n\geq 0} [x_n]p^n)=\sum\nolimits_{n\geq 0} x_n^{(0)}p^n.
 \]
 Here, we write $x=(x^{(0)}, x^{(1)},\ldots )\in S^\flat=\varprojlim\nolimits_\phi S$.
 As in \cite[\S 3]{BMS}, the homomorphism $\theta$ lifts to 
 \[
 \theta_\infty:  A_\inf(S)\to W(S),
 \]
 given by
 \[
 \theta_\infty(\sum\nolimits_{n\geq 0} [x_n]p^n)=\sum\nolimits_{n\geq 0}[x^{(0)}_n]p^n.
 \]

 \end{para}
 
 \begin{para}\label{par143}
 
 In the following, $\alpha$ runs over all rings homomorphisms $\alpha: S\to \O$ which are obtained from some $W$-homomorphism
 $ \ti R\to \ti W=\bar W$ by $p$-adic completion. There is a corresponding $A_\inf(S)\to A_\inf(\O)$   given by applying
 the Witt vector functor to $\alpha^\flat: S^\flat\to \O^\flat$.
 Note that if $F$ is a finite extension of $W[1/p]$, then any homomorphism $ \xi: R \to \O_F$ extends to such an $\alpha: S\to \O$.

  \begin{lemma}\label{lemmainf0}
a)  The homomorphism $A_\inf(S)\to \prod_{\alpha} A_\inf(\O)$ is injective.

b) The ring $A_\inf(S)$ is $p$-adically complete, local strict henselian and $\Z_p$-flat.
 \end{lemma}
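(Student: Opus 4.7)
The plan is to treat (a) and (b) separately, using the preceding results of this section.

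For (a), I will reduce the injectivity of $A_\inf(S)\to \prod_\alpha A_\inf(\O)$ to a statement modulo $p$. Since $A_\inf(-)=W((-)^\flat)$, since the Witt vector functor commutes with products and preserves injections, and since tilting (as a limit under Frobenius) also preserves injections, it suffices to show that the reduction mod $p$, namely $S/pS\to \prod_\alpha \O/p\O$, is injective. By Proposition \ref{prop131} this is the map $\ti R/p\ti R\to \prod_\alpha \bar W/p\bar W$ induced by the underlying $\alpha_0\colon \ti R\to \bar W$. An element of $\ti R/p\ti R$ is represented by $r\in R'$ for some finite normal $R$-algebra $R'\subset \ti R$ with $R'[1/p]$ \'etale over $R[1/p]$, and such $R'$ again satisfies (CN). If every $\alpha(r)$ lies in $p\bar W$, then for each $W$-algebra homomorphism $\xi\colon R'\to \O_F$ one has $\xi(r)\in p\O_F$, because every such $\xi$ extends to an $\alpha_0\colon \ti R\to \bar W$: the tower $\ti R[1/p]/R'[1/p]$ is a filtered union of finite \'etale $R'[1/p]$-algebras, so $\xi[1/p]$ can be propagated along this tower inside $\bar\Q_p$, and integrality sends the extension into $\bar W$. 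Proposition \ref{dejong} applied to $R'$ then forces $r/p\in R'$, i.e.\ $r\in pR'$, which is the desired injectivity.

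For (b), I will read off the four properties from $A_\inf(S)=W(S^\flat)$ using Lemma \ref{localS}. Since $S^\flat$ is perfect of characteristic $p$, the ring $W(S^\flat)$ is $\Z_p$-flat and $p$-adically complete and separated ($pW(S^\flat)=V(W(S^\flat))$, and $W(S^\flat)$ is $V$-adically complete by construction). For locality and strict henselianity I use that $W(S^\flat)/pW(S^\flat)\simeq S^\flat$ is local strict henselian with residue field $k$ by Lemma \ref{localS}: since $W(S^\flat)$ is $p$-adically complete, $p$ lies in its Jacobson radical, so maximal ideals of $W(S^\flat)$ correspond bijectively to those of $S^\flat$, forcing $W(S^\flat)$ to be local with residue field $k$; and Hensel lifting along the $p$-adic filtration (using $p$-adic completeness together with the strict henselianity of $S^\flat$) upgrades strict henselianity from $S^\flat$ to $W(S^\flat)$.

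The main obstacle, as I see it, is in (a): realizing an arbitrary $\xi\colon R'\to \O_F$ as the restriction of some $\alpha_0\colon \ti R\to \bar W$, so that the hypothesis $\alpha(r)\in p\bar W$ can be fed into Proposition \ref{dejong} on $R'$. Once this extension is in hand, the reduction from $A_\inf$ to $(-)/p$ is a formal consequence of how $A_\inf$ is built, and part (b) then becomes a direct application of Lemma \ref{localS} together with standard Witt-vector facts.
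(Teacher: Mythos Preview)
Your proposal is correct and follows essentially the same route as the paper: for (a) you reduce to the injectivity of $\ti R/p\ti R\hookrightarrow \prod_\alpha \bar W/p\bar W$ via Proposition \ref{dejong} applied to the intermediate $R'$, and for (b) you deduce everything from $W(S^\flat)/p\simeq S^\flat$ together with Lemma \ref{localS}. The only difference is that you spell out more explicitly the reduction steps (Witt vectors and tilts preserving injections and commuting with products) and the extension of $\xi\colon R'\to \O_F$ to some $\alpha_0\colon \ti R\to \bar W$, points the paper leaves implicit.
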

 
 \begin{proof} We first note that $\ti R\to \prod_\alpha \bar W$ is injective
 (reduce to the case $R$ is a formal power series ring by an argument as in the proof of Proposition \ref{limitProp}).
Also, $\ti R\cap (\prod_\alpha p\bar W)=p\bar R$, as this easily follows by Proposition \ref{dejong} applied to the algebras $R'$. Therefore, 
 \[
 S/pS=\ti R/p\ti R\subset \prod\nolimits_{\alpha} \bar W/p\bar W= \prod\nolimits_{\alpha}\O/p\O
 \]
 is injective. Hence, $S^\flat\to \prod_\alpha\O^\flat$ is injective and part (a) follows. 
 
 To show part (b), recall $A_\inf(S)=W(S^\flat)$.
 The ring $S^\flat$ is perfect, and so $pW(S^\flat)=I_{S^\flat}$, $W(S^\flat)/pW(S^\flat)=S^\flat$. It follows that $A_\inf(S)$ is $p$-adically complete and that $p$ is not a zero divisor. Lemma \ref{localS} now implies that $A_\inf(S)$ is local and strict henselian. \end{proof}

 \end{para}
 
 \begin{para}
 
 Now let us fix an embedding $\bar W\hookrightarrow \ti R$, which induces $\O\hookrightarrow  S$. 
 Let 
 \[
 \epsilon=(1, \zeta_p,   \zeta_{p^2},\ldots )\in \O^\flat=\varprojlim\nolimits_\phi \O
 \]
 be a system of primitive $p$-th power roots of unity. Set
  \[
 \mu=[\epsilon]-1\in A_{\inf}(\O)\subset A_\inf(S).
 \]

\begin{prop}\label{lemmainf1}
a) The element $\mu$ is not  a zero divisor in $A_\inf(S)$.

b) Suppose that $f$ in $A_\inf(S)$ is such that, for every $\alpha: S\to \O$ obtained from $\ti R\to \bar W$
as above, $\alpha(f)$ is in the ideal 
$(\mu)$ of $A_\inf(\O)$. Then, $f$ is in $(\mu)$.
\end{prop}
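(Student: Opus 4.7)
The plan is to handle both parts via the injection $A_{\inf}(S) \hookrightarrow \prod_\alpha A_{\inf}(\O)$ from Lemma~\ref{lemmainf0}(a), reducing everything to the corresponding statements over $A_{\inf}(\O)$ together with a descent argument in the tilt.

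For part (a), I observe that $A_{\inf}(\O) = W(\O^\flat)$ is the Witt ring of the perfect valuation domain $\O^\flat$, and is itself an integral domain: every element has a unique expansion $\sum_{n \geq 0}[a_n]p^n$, so $A_{\inf}(\O)/p = \O^\flat$ is a domain and $p$ is a non-zero divisor, whence $A_{\inf}(\O)$ is a domain. In particular $\mu = [\epsilon] - 1 \neq 0$ is a non-zero divisor in each factor, hence in the product, and hence in the subring $A_{\inf}(S)$.

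For part (b), I would argue by $p$-adic induction, showing $f \in \mu A_{\inf}(S) + p^n A_{\inf}(S)$ for every $n \geq 1$ and then passing to the limit. The inductive step is formal once the base case $n = 1$ is in hand. If $f = \mu g_n + p^n h_n$, then $p^n \alpha(h_n) = \alpha(f) - \mu\alpha(g_n) \in \mu A_{\inf}(\O)$, and one strips factors of $p$: if $p^k \alpha(h_n) = \mu\beta_k$, then $\mu \bar\beta_k = 0$ in $\O^\flat$, so $\bar\beta_k = 0$ (as $\epsilon - 1$ is a non-zero divisor in $\O^\flat$), hence $\beta_k = p\beta_{k-1}$, and then $p^{k-1}\alpha(h_n) = \mu\beta_{k-1}$ since $p$ is a non-zero divisor in the domain $A_{\inf}(\O)$. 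After $n$ iterations, $\alpha(h_n) \in \mu A_{\inf}(\O)$ for every $\alpha$, so $h_n$ satisfies the same hypothesis as $f$. Applying the base case, $h_n = \mu g_n' + p h_n'$, so $f = \mu(g_n + p^n g_n') + p^{n+1}h_n'$. The sequence $(g_n)$ is $p$-adically Cauchy, and since $A_{\inf}(S) = W(S^\flat)$ is $p$-adically complete (as $S^\flat$ is a perfect $\F_p$-algebra), it converges to some $g$ with $\mu g = \lim(f - p^n h_n) = f$.

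The main obstacle is the mod-$p$ base case: one must show that the image $\bar f \in S^\flat = A_{\inf}(S)/p$ of $f$ lies in $(\epsilon - 1)S^\flat$, i.e.\ that $(\epsilon - 1)S^\flat = S^\flat \cap \prod_\alpha (\epsilon - 1)\O^\flat$ inside $\prod_\alpha \O^\flat$. To prove this, the plan is to exploit the injection $S^\flat \hookrightarrow \prod_\alpha \O^\flat$ obtained from the mod-$p$ injection $\ti R/p\ti R \hookrightarrow \prod_\alpha \bar W/p\bar W$ of the proof of Lemma~\ref{lemmainf0}(a) by applying $\varprojlim_\phi$ (inverse limits commute with products), together with the perfect-ring identity $\epsilon - 1 = (\epsilon^{1/p^n} - 1)^{p^n}$, which allows one to replace divisibility by $\epsilon - 1$ with divisibility by arbitrarily small powers of the pseudo-uniformizer $\epsilon^{1/p^n} - 1$. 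Combined with the Galois-theoretic control given by Theorem~\ref{FaltingsAlmost}, which identifies $R$ with the $\Gamma_R$-fixed points of $\ti R^\wedge$ and provides a descent from the product indexed by $\alpha$ to the $\Gamma_R$-invariants and ultimately to $S^\flat$, one concludes that any $g \in S^\flat$ whose image in each $\O^\flat$ is divisible by $\epsilon - 1$ is in fact divisible by $\epsilon - 1$ in $S^\flat$.
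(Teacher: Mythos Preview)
Your argument for part (a) is correct and in fact cleaner than the paper's, which cites \cite[Prop.~3.17(ii)]{BMS}: you simply use that $A_{\inf}(S)$ embeds in a product of integral domains $A_{\inf}(\O)$, in each of which $\mu\neq 0$.

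For part (b), your $p$-adic approximation scheme and the inductive step (stripping powers of $p$ inside each $A_{\inf}(\O)$) are fine, as is the convergence. The gap is the base case. You correctly identify it as the statement
\[
(\epsilon-1)S^\flat \;=\; S^\flat \cap \prod_\alpha (\epsilon-1)\O^\flat,
\]
but you do not prove it. Theorem~\ref{FaltingsAlmost} is about $\Gamma_R$-invariants in $\ti R^\wedge$ in characteristic~$0$ and has no bearing here, and the perfect-ring identity $(\epsilon^{1/p^n}-1)^{p^n}=\epsilon-1$ by itself does not yield divisibility in $S^\flat$. Even if one shows, using Proposition~\ref{dejong} applied to each $R'\subset \ti R$, that the $n$-th component of $\bar f$ is divisible by $\zeta_{p^n}-1$ in $\ti R/p\ti R=S/pS$, one must still check that the resulting quotients are Frobenius-compatible and assemble to an element of $S^\flat$; since $\zeta_{p^n}-1$ is a zero divisor in $S/pS$ (already in $\Z_p[\zeta_{p^n}]/p$), this compatibility is not automatic and needs an argument.

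The paper avoids these mod-$p$ issues entirely by working instead with the presentation $A_{\inf}(S)=\varprojlim_\phi W_r(S)$ and ghost coordinates: for each $r$ one has ${\rm gh}(\tilde\theta_r(\mu))=(\zeta_{p^r}-1,\ldots,\zeta_p-1)$, so the hypothesis translates into the \emph{characteristic-$0$} divisibility $(\zeta_{p^i}-1)\mid {\rm gh}_i(\tilde\theta_r(f))$ in $S$. Here Proposition~\ref{dejong} (applied to the $R'$) gives both the divisibility in $S$ and the integrality of the universal polynomials needed to recognise the quotients as ghost coordinates of an element $\gamma_r\in W_r(S)$. Uniqueness of $\gamma_r$ (from $\zeta_{p^i}-1$ being a non-zero divisor in $S$) forces $\phi(\gamma_r)=\gamma_{r-1}$, so the $\gamma_r$ assemble to $\gamma\in A_{\inf}(S)$ with $f=\mu\gamma$. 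If you want to rescue your approach, the right input is Proposition~\ref{dejong}, not Theorem~\ref{FaltingsAlmost}; but the paper's route through $W_r(S)$ is more direct.
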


\begin{proof}
Part (a) follows from \cite[Prop. 3.17 (ii)]{BMS}. As in the proof of loc. cit., the ghost coordinate vectors of $\tilde\theta_r(\mu)$ are
\[
{\rm gh}(\tilde\theta_r(\mu))=(\zeta_{p^r}-1,\ldots, \zeta_p-1)\in S^r,
\]
and the result follows from this. 

Let us show (b). Recall
\[
A_\inf(S)=\varprojlim\nolimits_{\phi} W_r(S)\subset \prod\nolimits_{r\geq 1} W_r(S); \  a\mapsto (\tilde \theta_r(a))_r.
\]
Now suppose that $\alpha(f)=\mu\cdot b_\alpha$, for $b_\alpha\in A_\inf(\O)$. Apply $\tilde \theta_r$ to obtain
\[
\alpha(\tilde \theta_r(f))=\tilde \theta_r(\alpha(f))=\tilde\theta_r(\mu)\cdot \tilde\theta_r(b_\alpha).
\]
This implies that, for all $i=1,\ldots, r$, and all $\alpha$,
\[
\zeta_{p^i}-1\ |\ \alpha({\rm gh}_i(\tilde \theta_r(f)))
\]
in $\O$. The same argument as in the proof of Lemma \ref{lemmainf0} (a) above, gives
\[
S/(\zeta_{p^i}-1)S\hookrightarrow \prod\nolimits_\alpha \O/(\zeta_{p^i}-1)\O.
\]
This implies that $\zeta_{p^i}-1$ (uniquely) divides ${\rm gh}_i(\tilde \theta_r(f))$ in $S$. 

We claim that the quotients $g_{i, r}={\rm gh}_i(\tilde \theta_r(f))/(\zeta_{p^i}-1)$ in $S$ are the ghost coordinates
of an element $\gamma_r$ of $W_r(S)$, which is then the quotient $\tilde \theta_r(f)/\tilde \theta_r(\mu)$. To check this we have to show that certain universal polynomials in the $g_{i, r}$ with coefficients in $\Z[1/p]$ take values in $S$. This holds 
after evaluating by $\alpha: S\to \O$ and so the same argument using Proposition \ref{dejong} as before, shows that it is true. It follows that, for all $r$, $\ti\theta_r(\mu)$ uniquely divides $\ti\theta_r(f)$ in $W_r(S)$ and, in fact,
\[
\ti\theta_r(f)=\ti\theta_r(\mu)\cdot \gamma_r.
\]
Applying $\phi$ gives
\[
\ti\theta_{r-1}(f)=\ti\theta_{r-1}(\mu)\cdot \phi(\gamma_r).
\]
in $W_{r-1}(S)$. Therefore, $\phi(\gamma_r)=\gamma_{r-1}$. Hence, there is  $\gamma\in A_\inf(S)=\varprojlim\nolimits_{\phi} W_r(S)$ such that $\gamma_r=\ti \theta_r(\gamma)$. Then, 
$
f=\mu\cdot \gamma.
$
\end{proof}

\begin{cor}\label{corinf}
a) We have
$
A_\inf(S)=(A_\inf(S)[1/\mu])\cap \prod_\alpha A_\inf(\O).
$

b) Suppose that
$M_1$ and $M_2$ are two finite free $A_\inf(S)$-modules with $M_1[1/\mu]=M_2[1/\mu]$, and such that
$\alpha^*M_1=\alpha^*M_2$ as $A_\inf(\O)$-submodules of $\alpha^*M_1[1/\mu]=\alpha^*M_2[1/\mu]$, for all $\alpha$. Then $M_1=M_2$.
\end{cor}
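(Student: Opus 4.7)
The plan is to deduce both parts from Proposition~\ref{lemmainf1}(b), using the fact that $\mu$ is a non-zero-divisor in $A_\inf(S)$ (Proposition~\ref{lemmainf1}(a)) and the injectivity $A_\inf(S)\hookrightarrow \prod_\alpha A_\inf(\O)$ of Lemma~\ref{lemmainf0}(a).

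For part (a), the inclusion $A_\inf(S) \subseteq A_\inf(S)[1/\mu]\cap \prod_\alpha A_\inf(\O)$ is clear, so I only need the reverse. Take $f$ in the intersection and write $f = g/\mu^N$ with $g\in A_\inf(S)$ and $N\geq 0$ minimal; the goal is to show $N=0$. For each $\alpha$, we have $\alpha(g) = \mu^N\alpha(f) \in \mu^N A_\inf(\O)$, so in particular $\mu\mid \alpha(g)$ in $A_\inf(\O)$ whenever $N\geq 1$. By Proposition~\ref{lemmainf1}(b), $\mu\mid g$ in $A_\inf(S)$, so $g = \mu g'$ for some $g'\in A_\inf(S)$. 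Since $\mu$ is not a zero divisor in $A_\inf(S)[1/\mu]$, this gives $f = g'/\mu^{N-1}$, contradicting minimality unless $N=0$. Equivalently, one can argue by induction on $N$: using non-zero-divisor-ness of $\mu$ in $A_\inf(\O)$, one gets $\alpha(g')=\mu^{N-1}\alpha(f)\in \mu^{N-1}A_\inf(\O)$, and the inductive hypothesis applied to $g'$ finishes the step.

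For part (b), I will show $M_1\subseteq M_2$; the reverse inclusion follows by symmetry. Choose an $A_\inf(S)$-basis $e_1,\dots,e_r$ of $M_2$. Given $m\in M_1 \subseteq M_1[1/\mu]=M_2[1/\mu]$, write uniquely $m = \sum_i f_i\, e_i$ with $f_i\in A_\inf(S)[1/\mu]$. For each $\alpha$, the base change $\alpha(e_1),\dots,\alpha(e_r)$ is an $A_\inf(\O)$-basis of $\alpha^*M_2$, and by hypothesis $\alpha^*m \in \alpha^*M_1 = \alpha^*M_2$; expanding $\alpha^*m = \sum_i \alpha(f_i)\,\alpha(e_i)$ in this basis forces $\alpha(f_i)\in A_\inf(\O)$ for every $i$ and every $\alpha$. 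Part (a) then gives $f_i\in A_\inf(S)$ for all $i$, so $m\in M_2$.

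The only step requiring real input is the divisibility reduction in part~(a), and this is exactly what Proposition~\ref{lemmainf1}(b) provides; once that is in hand, the rest is formal manipulation with bases and localization, with no further obstacle.
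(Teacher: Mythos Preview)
Your proof is correct and follows essentially the same approach as the paper's own proof: part~(a) is a direct reduction to Proposition~\ref{lemmainf1}(b) via the non-zero-divisor property of~$\mu$, and part~(b) is obtained by applying~(a) to the coordinates of elements of~$M_1$ with respect to a basis of~$M_2$ (the paper phrases this equivalently as applying~(a) to the entries of the change-of-basis matrices).
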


\begin{proof}
Part (a) follows directly from the previous proposition. Part (b) follows by applying (a) to the entries of the matrices expressing 
a basis of $M_1$ as a combination of a basis of $M_2$, and vice versa.
\end{proof}
\end{para}
 
\begin{para}
As in \cite[\S 3]{BMS}, set $\xi=\mu/\phi^{-1}(\mu)$ which is a generator of the kernel of the homomorphism  $\theta: A_\inf(S)\to S$.
Let $A_{\rm cris}(S)$ be the $p$-adic completion of the divided power envelope of $A_\inf(S)$ along $(\xi)$. By \cite[App. to XVII]{Schber}, the natural homomorphism
\[
A_\inf(S)\rightarrow A_{\rm cris}(S)
\]
is injective.
\begin{comment}
Set $B^+_{\rm cris}(S)=A_{\rm cris}(S)[1/p]$ and $B_{\rm cris}(S)=A_{\rm cris}(S)[1/(p\mu)]=A_{\rm cris}(S)[1/\mu]$.

\begin{lemma}\label{lemmainf2}
We have $A_\inf(S)\subset A_{\rm cris}(S)$ and 
\[
A_{\rm inf}(S)\subset A_{\rm inf}(S)[1/(p\mu)]\subset A_{\rm cris}(S)[1/(p\mu)]=B_{\rm cris}(S).
\]
\end{lemma}

\begin{proof}
 See \cite[App. to XVII]{Schber} for the first statement. The rest follows from this and the definitions since, by the above lemmas, $p$ and $\mu$ are not zero divisors in $A_\inf(S)$.
\end{proof}
\end{comment}
We also record:

\begin{lemma}\label{lemmainf3}
$A_\inf(S)^{\phi=1}=\Z_p$.
\end{lemma}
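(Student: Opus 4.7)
The plan is to reduce the statement about $A_\inf(S) = W(S^\flat)$ to the analogous statement about $S^\flat$ itself, using Teichmüller expansions, and then to the already-understood case of $\O^\flat$ via the embedding machinery that has already been set up.

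Because $S^\flat$ is perfect of characteristic $p$, every element of $W(S^\flat)$ has a unique Teichmüller expansion $\sum_{n\geq 0}[a_n]p^n$ with $a_n\in S^\flat$, and the Witt vector Frobenius acts by $\phi(\sum[a_n]p^n)=\sum[a_n^p]p^n$. By uniqueness of the expansion, $\phi$-fixed elements of $W(S^\flat)$ correspond precisely to sequences with each $a_n$ lying in $(S^\flat)^{\phi=1}=\{y\in S^\flat : y^p=y\}$. It therefore suffices to prove $(S^\flat)^{\phi=1}=\mathbb{F}_p$, for then $A_\inf(S)^{\phi=1}=W(\mathbb{F}_p)=\Z_p$.

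For the reduction, let $x\in S^\flat$ satisfy $x^p=x$. Its image in the residue field $k=\bar{\mathbb F}_p$ of $S^\flat$ (Lemma \ref{localS}) satisfies the same equation, and so equals some $\zeta\in\mathbb{F}_p$. Setting $y=x-\zeta$, we have $y^p=y$ and $y\in\mathfrak{m}^\flat$, and it remains to show $y=0$. For this I would use the injection
\[
S^\flat=\varprojlim\nolimits_\phi S/pS\hookrightarrow \varprojlim\nolimits_\phi \prod\nolimits_\alpha \O/p\O=\prod\nolimits_\alpha \O^\flat,
\]
obtained by taking the inverse limit along Frobenius of the injection $S/pS=\ti R/p\ti R\hookrightarrow\prod_\alpha \O/p\O$ established in the proof of Lemma \ref{lemmainf0}(a). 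Each $\alpha^\flat:S^\flat\to \O^\flat$ is local (it is induced from the local $W$-algebra map $\alpha:S\to\O$, and residue fields match via the identity on $k$), so $\alpha^\flat(y)$ lies in the maximal ideal of $\O^\flat$.

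The key input at the terminal step is that $\O^\flat$ is a perfect domain (indeed a valuation ring with fraction field $\mathbb{C}_p^\flat$): the polynomial $T^p-T=\prod_{\zeta\in\mathbb{F}_p}(T-\zeta)$ then forces $(\O^\flat)^{\phi=1}=\mathbb{F}_p$. So $\alpha^\flat(y)\in\mathbb{F}_p\cap\mathfrak{m}^\flat_\O=\{0\}$, and this holds for every $\alpha$; the injection above then gives $y=0$, as needed. The only mildly subtle point is the locality of $\alpha^\flat$ (equivalently, that $\alpha$ respects maximal ideals), which follows from compatibility of residue fields, both being $k$, under the $W$-algebra structure; everything else is essentially formal once the Teichmüller reduction and the product embedding are in place.
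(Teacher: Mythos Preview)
Your proof is correct, but takes a genuinely different route from the paper after the common first reduction to $(S^\flat)^{\phi=1}=\mathbb{F}_p$.

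The paper further identifies $(S^\flat)^{\phi=1}$ with $(\tilde R/p\tilde R)^{\phi=1}$ and argues at finite level: given $a\in\tilde R$ with $a^p\equiv a\bmod p\tilde R$, one finds $a\in R'$ for some finite normal $R'/R$ with $R'[1/p]$ \'etale over $R[1/p]$; since $R'$ is $p$-adically complete, Hensel's lemma produces a genuine root $b\in R'$ of $T^p-T$ with $b\equiv a\bmod pR'$, and since $R'$ is an integral domain this forces $b\in\Z_p$, hence $a\bmod p\in\mathbb{F}_p$. Your argument instead stays in $S^\flat$ and exploits the injection $S^\flat\hookrightarrow\prod_\alpha\O^\flat$ already established in the proof of Lemma~\ref{lemmainf0}(a), together with $(\O^\flat)^{\phi=1}=\mathbb{F}_p$ (as $\O^\flat$ is a domain). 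The paper's approach is more self-contained, needing only Hensel and elementary facts about the $R'$; yours is more in the spirit of the surrounding section, which repeatedly reduces questions about $S$ to the $\O$-case via the product embedding. The one point you flag---that $\alpha^\flat$ is local---is indeed easy: $\alpha^{-1}(\frakm_\O)$ contains $p$, and $S/\alpha^{-1}(\frakm_\O)$ embeds in $k=\bar{\mathbb F}_p$, hence is a field (every element is algebraic over $\mathbb{F}_p$), so $\alpha^{-1}(\frakm_\O)=\frakm_S$.
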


\begin{proof}
It is enough to show that $(S^\flat)^{\phi=1}={\mathbb F}_p$, i.e. that $(\ti R/p\ti R)^{\phi=1}=(S/pS)^{\phi=1}={\mathbb F}_p$. 
Now argue as in \cite[6.2.19]{Brinon}:
Suppose $a\in \ti R$ is such that $a^p=a\, {\rm mod}\, p\ti R$. Then $a\in R'$, for some $R'/R$ finite normal with $R'[1/p]$ \'etale over $R[1/p]$ and $a^p=a\, {\rm mod}\, pR'$.
 By Hensel's lemma for the $p$-adically complete $R'$,
the equation $x^p-x=0$ has a root $b$ in $R'$ which is congruent to $a\, {\rm mod}\, pR'$. But $R'$ is an integral domain, so any such root is one of the standard roots in $\Z_p$, so $b$ is in $\Z_p$ and $b=a\, {\rm mod}\, pR'$ in ${\mathbb F}_p$.
\end{proof}

\end{para}

\ve
 
\section{Shimura pairs and $\Gg$-torsors}
 
 \subsection{Shimura pairs}\label{ssShimura}
 
 We first set up some notation for (integral) Shimura pairs and then define the notion of a local Hodge embedding.
 
  \begin{para}
  
  Let $G$ be a connected reductive algebraic group over $\Q_p$
  and $\{\mu\}$ the $G(\bar \Q_p)$-conjugacy class of a
  minuscule cocharacter $\mu: \Gm_{,\bar\Q_p}\to G_{\bar \Q_p}$. 
  
  To such a pair $(G, \{\mu\})$, we associate:
  
   \begin{itemize}
   
  \item The reflex field $E\subset \bar\Q_p$. As usual, $E$ is the field of definition of the conjugacy class $\{\mu\}$ (i.e. the finite extension of $\Q_p$ which corresponds to the subgroup of $\sigma\in \Gal(\bar\Q_p/\Q_p)$  such that $\sigma(\mu)$ is $G(\bar\Q_p)$-conjugate to $\mu$.)

  \item The $G$-homogeneous variety $X_\mu=X_\mu(G)$ of parabolic subgroups of $G$ of type $\mu$. This is a projective smooth $G$-variety over $E$ with $X_\mu(\bar\Q_p)=G(\bar\Q_p)/P_{\mu}(\bar\Q_p)$.
  
  \end{itemize}
  
  \end{para}
  
  \begin{para}\label{par113}
  
  An \emph{integral local Shimura pair} is $(\Gg, \calM)$ where: 
  
  \begin{itemize}
   
  \item[i)] $\Gg$ is a parahoric group scheme
  over $\Z_p$ with generic fiber $G$.
  
 \item[ii)]  $\calM$ is a normal flat and projective $\O_E$-scheme $\calM$
  with $\Gg$-action which is a model of $X_\mu$, in the sense that there is a 
  $G$-equivariant isomorphism
  \[
 \calM\otimes_{\O_E}E\simeq X_\mu.
  \]
     \end{itemize}
     \end{para}
     
     \begin{para}\label{LMtheory}
     The theory of local models suggests that there should be a canonical choice of a scheme $\calM$ as in (ii) which depends only on
     $(\Gg, \{\mu\})$, up to unique $\Gg$-equivariant isomorphism.  
     
     More precisely, Scholze conjectures \cite[Conjecture 21.4.1]{Schber}, the existence of such a scheme
     $\Mloc=\Mloc{(\Gg, \{\mu\})}$ (the \emph{local model}), which has, in addition, reduced special fiber (and hence is normal), and which is uniquely characterized by its corresponding $v$-sheaf (see loc. cit.  for details). Local models $\Mloc$ as in \cite[Conjecture 21.4.1]{Schber}, have been constructed in many cases. We list some  results:

 1) If $G$ splits over a tamely ramified extension of $\Q_p$, there is a construction of local models in 
\cite[\S 7]{PZ}  (which was adjusted as in \cite[\S 2.6]{HPR} when $p$ divides $|\pi_1(G_{\rm der}(\bar \Q_p)|$). Conjecturally, these satisfy the conditions of Scholze's  \cite[Conjecture 21.4.1]{Schber} (\cite[Conjecture 2.16]{HPR}). This has been shown in almost all cases that $(G, \{\mu\})$ is of local abelian type (see \S\ref{RemarkLHT}) 
\cite[Theorem 2.15]{HPR}, see also \cite{LourencoThesis}.

 2) If $(G, \{\mu\})$ is of local abelian type and $p$ is odd, the local models $\Mloc$ have been constructed by Louren\c co \cite[4.22, 4.24]{LourencoThesis}.

 In our application to Shimura varieties, we would like to choose $\calM=\Mloc$. 
However, it is convenient to develop the set-up for a more general $\calM$.
     
     \end{para}
     
 \begin{para}
 
 Let $(\Gg, \calM)$ be an integral local Shimura pair. We consider the following conditions:
 
(H1) There is a group scheme homomorphism
\[
\iota: \Gg\hookrightarrow \GL_n
\]
which is a closed immersion such that $\{\iota(\mu)\}$ is the conjugacy class
of one the standard minuscule cocharacters $\mu_d(a)={\rm diag}(a^{(d)}, 1^{(n-d)})$ of $\GL_n$ for some $1\leq d\leq n-1$, and $\iota(\Gg)$ contains the scalars $\Gm$.

Note that the corresponding $\GL_n$-homogeneous space $X_{\mu_d}(\GL_n)$ is the Grassmannian ${\rm Gr}(d, n)$ of $d$-spaces in $\Q_p^n$. Under the assumption (H1), $\iota$ gives an equivariant closed embedding $X_\mu\subset {\rm Gr}(d, n)_E$. Set $\Lambda=\Z_p^n$. The Grassmannian ${\rm Gr}(d, n) $  has a natural model over $\Z_p$ which we will denote by ${\rm Gr}(d, \Lambda)$. 
  
 (H2)  The normalization of the Zariski closure of $X_\mu$ in ${\rm Gr}(d, \Lambda)_{\O_E}$ is $\Gg$-equivariantly isomorphic to $\calM$. Hence, there is a $\Gg$-equivariant finite morphism
\[
\iota_*: \M\to {\rm Gr}(d, \Lambda)_{\O_E}
\]
which is $X_\mu \subset {\rm Gr}(d, n)_{E}$ on the generic fibers.

We call an $\iota$, that satisfies (H1) and (H2), a \emph{integral local Hodge embedding} for the pair $(\Gg, \calM)$.
When such an integral local Hodge embedding exists, we say that $(\Gg, \calM)$ is of \emph{integral local Hodge type.}
\smallskip

We often need the following stronger version of (H2):
\smallskip

(H2*)  The Zariski closure of $X_\mu$ in ${\rm Gr}(d, \Lambda)_{\O_E}$ is $\Gg$-equivariantly isomorphic to $\calM$.
Hence, $X_\mu \subset {\rm Gr}(d, n)_{E}$ extends to a $\Gg$-equivariant closed immersion
\[
\iota_*: \M\hookrightarrow {\rm Gr}(d, \Lambda)_{\O_E}.
\]

We call   an $\iota$, that satisfies (H1) and (H2*), a \emph{strongly integral local Hodge embedding} for the pair $(\Gg, \calM)$.
When such  an embedding exists, we say that $(\Gg, \calM)$ is of \emph{strongly integral local Hodge type.}
\end{para}

\begin{para}\label{RemarkLHT}
 This notion should also be compared to the often used weaker notion of \emph{local Hodge type} which refers to the (rational) local Shimura pair $(G, \{\mu\})$: 
 
 We say that $(G, \{\mu\})$ is of local Hodge type if there is a group scheme homomorphism
$
\iota: G\hookrightarrow \GL_n
$
which is a closed immersion such that $\{\iota(\mu)\}$ is the conjugacy class
of one of the standard minuscule cocharacters $\mu_d$.  
(There is also the following related notion: $(G, \{\mu\})$ is ``of abelian type" means that there is a central lift $(G_1,\{\mu_1\})$
of $(G_{\rm ad}, \{\mu_{\rm ad}\})$ which is of local Hodge type.)

The following statement that  relates the two notions when $\calM=\Mloc$ can  be extracted from the proof of \cite[Theorem 2.15]{HPR}:

\begin{prop}\label{propLHT}
Suppose that $(G, \{\mu\})$ is of local Hodge type with $\iota: G\hookrightarrow \GL_n$ as above such that $\Gm\subset\iota( G)$, and $\Gg$ is a parahoric stabilizer with $G=\Gg\otimes_{\Z_p}\Q_p$. Assume also that $p$ is odd, $p\nmid |\pi_1(G_{\rm der}(\bar\Q_p))|$, and that $G$ splits over a tamely ramified extension of $\Q_p$.
Then $(\Gg, \Mloc)$ is of strongly integral local Hodge type.\endproof
\end{prop}

\end{para}

\subsection{Torsors}\label{ss:torsors}

In this paragraph, $\Gg$ is a smooth connected affine flat group scheme
  over $\Z_p$ with generic fiber $G$. We will collect some general statements about $\Gg$-torsors.
We denote by ${\rm Rep}_{\Z_p}(\Gg)$ the exact tensor category of representations of $\Gg$ on finite free $\Z_p$-modules, i.e. of group scheme homomorphisms $\rho: \Gg\to \GL(\Lambda')$   with $\Lambda'$ a finite free $\Z_p$-module.

\begin{para}\label{para321}
Suppose $\iota:\Gg\hookrightarrow \GL(\Lambda)\simeq \GL_n$ is a closed group scheme immersion such that $\iota(\Gg)$ contains the scalars $\Gm$. Here $\Lambda$ is a free $\Z_p$-module of rank $n$.
Denote by $\Lambda^\otimes =\oplus_{m, n\geq 0}\, \Lambda^{\otimes m}\otimes_{\Z_p}(\Lambda^\vee)^{\otimes n}$ the total tensor algebra of $\Lambda$, where $\Lambda^\vee={\rm Hom}_{\Z_p}(\Lambda, \Z_p)$.
By using the improved\footnote{in the sense that one does not need symmetric and alternating tensors, as in \cite[Prop. 1.3.2]{KisinJAMS}.} version of \cite[Prop. 1.3.2]{KisinJAMS} given in \cite{DeligneLetter}, we can 
realize $\Gg$ as the scheme theoretic stabilizer of a finite list of tensors $(s_a)\subset   \Lambda^\otimes$: For any $\Z_p$-algebra 
$R$ we have
\[
\Gg(R)=\{g\in \GL(\Lambda\otimes_{\Z_p}R)\ |\ g\cdot (s_a\otimes 1)=(s_a\otimes 1), \forall\alpha\}.
\]
Since we assume that $\iota(\Gg)$ contains the scalars $\Gm$ and $a\in \Gm$ acts on $\Lambda^{\otimes m}\otimes_{\Z_p}(\Lambda^\vee)^{\otimes n}$ via $a^{m-n}$, we see that the $s_a$ are contained in the part of the tensor algebra with $n=m$.
In particular, we can assume that every tensor $s_a$ is given by a $\Z_p$-linear map $\Lambda^{\otimes n}\to \Lambda^{\otimes n}$, for some $n=n_a$.

  \end{para}

\begin{para}\label{gen12}
  Let $A$ be a $\Z_p$-algebra. Set $S=\Spec(A)$. Suppose that
  $T\to S$ is a $\Gg$-torsor. By definition, this means that $T$ supports a (left) $\Gg$-action  $\Gg\times T\to T$ such that
  $\Gg\times T \xrightarrow{\sim}   T\times_S T $ given by $(g, t)\mapsto (gt, t)$ is an isomorphism, and $T\to S$ is faithfully flat and quasi-compact (fpqc). By descent, $T$ is affine, so $T=\Spec(B)$ with 
  $A\to B$  faithfully flat. 
  
  If $\rho: \Gg\to \GL(\Lambda')$ is in ${\rm Rep}_{\Z_p}(\Gg)$,   we can consider the vector bundle over $S$ which is attached to $T$ and $\rho$: 
  \[
  T(\rho)=T\times^\Gg_{\Spec(\Z_p)} \bbA({\Lambda'}) =(T\times_{\Spec(\Z_p)}  \bbA({\Lambda'}))/\sim
  \]
 where $(g^{-1}t, \lambda)\sim (t, \rho(g)\lambda)$. Here, $\bbA({\Lambda'})$ is the affine space
 $\Spec({\rm Symm}_{\Z_p}(\Lambda'^\vee))$ over $\Spec(\Z_p)$. 
 In what follows, we often abuse notation, and also denote 
 by $T(\rho)$ the corresponding $A$-module of global sections of the bundle $T(\rho)$.
 
By \cite{Broshi}, see also \cite[19.5.1]{Schber}, this construction gives an equivalence   between the category of $\Gg$-torsors $T\to S$ and the category of exact tensor functors 
\[
T:  {\rm Rep}_{\Z_p}(\Gg)\to {\rm Bun}(S); \ \ \rho\mapsto T(\rho),
\]
 into the  category of vector bundles ${\rm Bun}(S)$ on $S$.
 
 Assume now that $T$ is a $\Gg$-torsor and $\iota: \Gg\hookrightarrow \GL(\Lambda)$ is as in \S\ref{para321}.
    
  \begin{prop}\label{torsorRep} The $A$-module $M=T(\iota)$ is locally free of rank $n$ and comes equipped  with tensors $(m_a)\subset M^\otimes$ such that there is a $\Gg$-equivariant isomorphism 
  \[
  T\simeq \underline{\rm Isom}_{(m_a), (s_a)}(T(\iota), \Lambda\otimes_{\Z_p}A).
  \]
  \end{prop}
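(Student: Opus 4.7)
The plan is to exploit both pieces of the setup: the description of $\Gg$ as the scheme-theoretic stabilizer of the tensors $(s_a)\subset \Lambda^\otimes$, and the equivalence between $\Gg$-torsors and exact tensor functors $\rho \mapsto T(\rho)$ from ${\rm Rep}_{\Z_p}(\Gg)$ into ${\rm Bun}(S)$ recalled just above the proposition. Since $\iota$ is a representation on the finite free $\Z_p$-module $\Lambda$ of rank $n$, applying the tensor functor gives a vector bundle $M = T(\iota)$ on $S = \Spec(A)$ of rank $n$. The first assertion is therefore immediate.

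Next I construct the tensors $m_a$. As noted in the excerpt, because $\Gg$ contains the scalars, each $s_a$ lies in the summand of $\Lambda^\otimes$ of type $(n_a,n_a)$, so it can be viewed as a $\Gg$-equivariant endomorphism $s_a: \Lambda^{\otimes n_a} \to \Lambda^{\otimes n_a}$, i.e.\ as a morphism $\iota^{\otimes n_a}\to \iota^{\otimes n_a}$ in ${\rm Rep}_{\Z_p}(\Gg)$. Since $T$ is a tensor functor, $T(\iota^{\otimes n_a}) = M^{\otimes n_a}$, and I set $m_a := T(s_a)\in \End_A(M^{\otimes n_a}) \subset M^\otimes$. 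This is functorial in $T$.

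For the isomorphism, I define a $\Gg$-equivariant morphism
\[
\beta: T \longrightarrow \underline{\rm Isom}_{(m_a),(s_a)}(T(\iota), \Lambda\otimes_{\Z_p}A)
\]
as follows: on an $A$-algebra $A'$, a point $t\in T(A')$ trivializes the $\Gg$-torsor $T_{A'}$, and, through the tensor functor, this trivialization yields an isomorphism of $A'$-modules $M\otimes_A A' \xrightarrow{\sim} \Lambda\otimes_{\Z_p}A'$ which, by functoriality of the construction $m_a = T(s_a)$, carries $m_a\otimes 1$ to $s_a\otimes 1$. To show $\beta$ is an isomorphism, I pass to the fpqc cover $T\to S$, along which $T$ becomes the trivial torsor $\Gg_T$. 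Over $T = \Spec(B)$ the chosen trivialization identifies $M\otimes_A B$ with $\Lambda\otimes_{\Z_p}B$ sending $m_a\otimes 1$ to $s_a\otimes 1$, so the Isom sheaf on the right becomes $\underline{\rm Isom}_{(s_a),(s_a)}(\Lambda\otimes_{\Z_p}B, \Lambda\otimes_{\Z_p}B)$, which by the very definition of $\Gg$ as the stabilizer of the tensors $(s_a)$ is $\Gg_B$. Under this identification $\beta$ is the identity on $\Gg_B$, hence is an isomorphism after base change, and therefore an isomorphism by fpqc descent.

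The main point where care is needed is the last identification: one must check that the sheaf of tensor-preserving isomorphisms, viewed as a functor on $B$-algebras, really is represented by $\Gg_B$. This is exactly the tensorial description of $\Gg$ quoted from \cite{KisinJAMS}, so the essential input is that $\Gg$ is cut out, scheme-theoretically and not merely on points, by the tensors $(s_a)$; the representability and smoothness of the Isom scheme give no additional trouble since both sides of $\beta$ are affine of finite type over $S$.
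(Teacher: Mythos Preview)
Your proof is correct and follows essentially the same approach as the paper's: both construct $m_a$ by applying the tensor functor $T(-)$ to the $\Gg$-invariant tensor $s_a$ (the paper phrases this as pushing the $\Gg$-invariant map $s_a:\Z_p\to\Lambda^\otimes$ through $T$, you phrase it as applying $T$ to the endomorphism $s_a$ of $\iota^{\otimes n_a}$), and both verify the isomorphism by pulling back along the fpqc cover $T\to S$, where each side becomes $\Gg_T$ by the scheme-theoretic description of $\Gg$ as the stabilizer of $(s_a)$.
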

    
    \begin{proof}   This is quite standard, see for example  \cite[Cor. 1.3]{Broshi} for a similar statement. We sketch the argument: By the above, $M=T(\iota)$ is a locally free $A$-module of rank $n={\rm rank}_{\Z_p}(\Lambda)$.  Since the construction of $T(\rho)$ commutes with tensor operations (i.e. $\rho\mapsto T(\rho)$ gives a tensor functor)   we have 
    \[
    M^\otimes\simeq T\times^\Gg_{\Spec(\Z_p)} \bbA( \Lambda^\otimes).
    \]
     We can think of $s_a\in \Lambda^\otimes$ as $\Gg$-invariant linear maps $s_a: \Z_p\to \Lambda^\otimes$ which give $1\times s_a: S=\Gg\backslash T\to M^\otimes$, i.e. tensors $m_a=1\times s_a\in M^\otimes$. Set 
     $T'= \underline{\rm Isom}_{(m_a), (s_a)}(M, \Lambda\otimes_{\Z_p}A)$ with its natural left $\Gg$-action.
    The base change $T'\times_{S}T$ is equivariantly identified with $ \Gg\times T \simeq T\times_{S}T$ and the proof follows.
    \end{proof}
    
\begin{Remark}\label{indIota}
{\rm 
Suppose that $\iota':\Gg\hookrightarrow {\rm GL}(\Lambda')$ is another closed group scheme immersion that realizes $\Gg$ as the subgroup scheme that fixes $(s_b')\subset (\Lambda')^\otimes$. It follows that there is a $\Gg$-equivariant isomorphism
\[
\underline{\rm Isom}_{(m'_b), (s'_b)}(M', \Lambda'\otimes_{\Z_p}A)\xrightarrow{\sim} \underline{\rm Isom}_{(m_a), (s_a)}(M, \Lambda\otimes_{\Z_p}A).
\]}
\end{Remark}

   For the following, we assume in addition that $A$ is local and henselian.
   
    \begin{prop}\label{RaynaudGruson} 
    Suppose that $M$ is a finite free $A$-module and let $(m_a)\subset M^\otimes$.
    Consider the   $A$-scheme
     \[
  T= \underline{\rm Isom}_{(m_a), (s_a)}(M, \Lambda\otimes_{\Z_p}A)
  \]
  which supports a natural $\Gg$-action. Suppose that there exists a set of local $\Z_p$-algebra homomorphisms $\xi: A\to R_\xi$, with $\cap _{\xi}{\rm ker(\xi)}=(0)$, and such that, for every $\xi: A\to R_\xi$, the base change $\xi^*T:=T\times_{S}\Spec(R_\xi)$ is a $\Gg$-torsor over $\Spec(R_\xi)$. 
  Then, $T\to S$ is also a $\Gg$-torsor.
    \end{prop}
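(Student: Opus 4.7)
The plan is to separately verify the two defining properties of a $\Gg$-torsor for $T \to \Spec(A)$: (a) the action map $\alpha: \Gg \times_A T \to T \times_A T$, $(g, \phi) \mapsto (g\phi, \phi)$, is an isomorphism, and (b) the structural map is faithfully flat. Observe preliminarily that, since $A$ is local, $M$ is free of rank $n$, and $T$ is a closed subscheme of $\underline{\rm Isom}(M, \Lambda \otimes_{\Z_p} A)\simeq \GL(\Lambda)\times\Spec(A)$ cut out by the polynomial equations $\phi^{\otimes}(m_a)=s_a$; in particular $T$ is affine and of finite presentation over $A$.

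For (a), which is independent of the hypothesis on the $\xi$'s, I would proceed by Yoneda. Given any $A$-algebra $A'$ and any two $A'$-points $\phi_1,\phi_2\in T(A')$, the element $g:=\phi_1\circ\phi_2^{-1}\in\GL(\Lambda\otimes_{\Z_p}A')$ fixes every $s_a\otimes 1$ (since both $\phi_i$ carry $m_a\otimes 1$ to $s_a\otimes 1$), so $g\in\Gg(A')$ by the defining property of $\Gg$ as the scheme-theoretic fixer of $(s_a)$. The assignment $(\phi_1,\phi_2)\mapsto (g,\phi_2)$ is a two-sided functorial inverse to $\alpha$ on $A'$-points, so $\alpha$ is an isomorphism of $A$-schemes by Yoneda.

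For (b), I would first extract closed-fiber data: since each $\xi$ is local, the residue field $k$ of $A$ embeds in the residue field $k_\xi$ of $R_\xi$, and the closed fiber of the torsor $T\times_A R_\xi\to\Spec(R_\xi)$ is $T_k\otimes_k k_\xi$, itself a $\Gg_{k_\xi}$-torsor. Descent along the faithfully flat field extension $k\hookrightarrow k_\xi$ then shows $T_k$ is a $\Gg_k$-torsor, so it is smooth and nonempty of pure dimension $\dim\Gg$. The remaining task is flatness of $T\to\Spec(A)$. Since $T$ is of finite presentation, the flat locus $U\subset T$ is open; the faithful flatness of each $T\times_A R_\xi\to\Spec(R_\xi)$ gives control of $U$ at the points of $T$ lying above $\Spec(R_\xi)$, and the combination of the $\Gg$-equivariance of $U$ (immediate from (a), since the action is transitive on geometric fibers and flatness is \'etale-local) together with the injectivity $A\hookrightarrow\prod_\xi R_\xi$ from $\cap_\xi\ker(\xi)=(0)$ and the identification $T_k\subset U$ should force $U=T$. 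Once flatness is established, $T\to\Spec(A)$ is open, its image contains the closed point of the local scheme $\Spec(A)$, and therefore equals $\Spec(A)$, so faithful flatness follows.

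The principal obstacle is the flatness step: the family $\{\xi\}$ is only jointly injective on $A$, not a faithfully flat cover, so one cannot invoke fpqc descent of flatness directly. The argument must instead combine openness of the flat locus (a Raynaud--Gruson input, whence the proposition's label), the $\Gg$-equivariant structure on $T$ from (a), and the local henselian nature of $A$ together with the closed-fiber bootstrap, to rule out any non-flat component of $T$.
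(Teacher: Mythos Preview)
Your part (a) is correct and matches what the paper does at the end (phrased there via the tautological section of $T\times_S T\to T$). The gap is precisely where you locate it: flatness of $T\to S$. The paper does not attempt your ad hoc route; it simply invokes \cite[Thm.~(4.1.2)]{RaynaudGruson}. That result is a flatness criterion of roughly the following shape: if $T$ is a closed subscheme of finite presentation of an $S$-flat scheme of finite presentation (here $\GL(\Lambda)_A$), and $\{\Spec(R_\xi)\to S\}$ is a schematically dominant family---which is exactly what $\cap_\xi\ker(\xi)=(0)$ encodes---such that each base change $T\times_S\Spec(R_\xi)$ is $R_\xi$-flat, then $T$ is $S$-flat. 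With flatness in hand, non-emptiness of the closed fiber gives faithful flatness, and (a) finishes the proof, just as you say.

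Your sketch for flatness cannot be completed as written. The claim ``$T_k\subset U$'' is unjustified and essentially assumes the conclusion: knowing that $T_k$ is a smooth $\Gg_k$-torsor over $k$ says nothing about flatness of $T$ over $A$ at those points. Likewise, flatness of $T\times_A R_\xi$ over $R_\xi$ does not feed back into the $A$-flat locus $U\subset T$, because the maps $\Spec(R_\xi)\to\Spec(A)$ are not themselves assumed flat. Openness of $U$ and $\Gg$-equivariance alone do not convert the jointly-injective hypothesis into flatness; that conversion is the actual content of the Raynaud--Gruson theorem, and it is not elementary.
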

    
    \begin{proof}
  The scheme $T$ is affine and $T\to S$ is of finite presentation. The essential difficulty is in showing that $T\to S$ is flat but under our assumptions, this follows from   \cite[Thm. (4.1.2)]{RaynaudGruson}. The fiber of $T\to S$ over the closed point of $S$ is not empty, hence $T\to S$ is also faithfully flat. Now the base change $T\times_ST$ admits a tautological section which gives a
  $\Gg$-isomorphism $T\times_S T\simeq \Gg\times T$. This completes the proof.
    \end{proof}

We will now allow some more general $\Z_p$-algebras $A$:

    \begin{cor}\label{CorRaynaudGruson}
    Set $A=W(R)$, where $R$ satisfies (N).   Suppose that $M$ is a finite projective $A$-module, 
    $(m_a)\subset M^\otimes$, and   $T=\underline{\rm Isom}_{(m_a), (s_a)}(M, \Lambda\otimes_{\Z_p}A)$. Assume that for all $W(k)$-algebra homomorphisms $\ti x: R\to \O_F$, where $F$ runs over all finite extensions of $W(k)_\Q$, the pull-back $T\otimes_A W(\O_F)$ is a $\Gg$-torsor over $W(\O_F)$. Then $T$ is a $\Gg$-torsor over $A$.
    \end{cor}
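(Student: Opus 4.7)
The plan is to apply Proposition \ref{RaynaudGruson} after localizing and henselizing $A = W(R)$ at each maximal ideal. For an affine morphism of finite presentation such as $T \to S = \Spec W(R)$, the property of being a $\Gg$-torsor can be tested maximal-ideal by maximal-ideal: flatness is local, the morphism $\Gg \times T \to T \times_S T$ is an isomorphism iff it is so stalkwise, and for a flat morphism of finite presentation, non-emptiness of the fiber over every maximal ideal forces non-emptiness over every prime that specializes to it. Hence it suffices to show that $T \otimes_A W(R)^h_\frakM$ is a $\Gg$-torsor for every maximal ideal $\frakM \subset W(R)$.

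Fix such $\frakM$, and let $\frakm = \frakM/I_R$ be the corresponding maximal ideal of $R$ (recall $I_R \subset \frakM$ for every maximal ideal of $W(R)$). Let $\hat R_\frakm$ denote the $\frakm$-adic completion. Since $R$ is formally of finite type over $W(k)$, it is excellent, so $\hat R_\frakm$ remains a normal $\Z_p$-flat complete local domain and therefore satisfies (CN). By Lemma \ref{A1lemma} we have a chain of injections $W(R) \hookrightarrow W(R)^h_\frakM \hookrightarrow W(\hat R_\frakm)$, with $W(R)^h_\frakM$ local henselian. Every $W(k)$-algebra homomorphism $\tilde x: \hat R_\frakm \to \O_F$ is induced from a $W(k)$-algebra map $R \to \O_F$ that sends $\frakm$ into the maximal ideal of $\O_F$, and Proposition \ref{dejong} applied to the normal ring $\hat R_\frakm$ shows that the joint map $\hat R_\frakm \hookrightarrow \prod_{\tilde x} \O_F$ is injective. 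Applying the Witt vector functor yields $W(\hat R_\frakm) \hookrightarrow \prod_{\tilde x} W(\O_F)$, and restricting to $W(R)^h_\frakM$ gives a family of local $\Z_p$-algebra homomorphisms $\xi: W(R)^h_\frakM \to W(\O_F)$ with $\bigcap_\xi \ker(\xi) = (0)$. Each base change $\xi^* T$ is a $\Gg$-torsor by hypothesis, so Proposition \ref{RaynaudGruson} applied to the local henselian ring $W(R)^h_\frakM$ shows that $T \otimes_A W(R)^h_\frakM$ is a $\Gg$-torsor. Faithfully flat descent along $W(R)_\frakM \to W(R)^h_\frakM$ concludes the argument at $\frakM$.

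The main obstacle is bridging the local-henselian hypothesis of Proposition \ref{RaynaudGruson} with the fact that $A = W(R)$ is typically neither local nor henselian. The remedy is the chain of injections $W(R)^h_\frakM \hookrightarrow W(\hat R_\frakm) \hookrightarrow \prod_{\tilde x} W(\O_F)$, the first coming from Lemma \ref{A1lemma} and the second built from the preservation of normality under completion of excellent rings together with Proposition \ref{dejong}. Once these are in place, the hypothesis of Proposition \ref{RaynaudGruson} is verified for $W(R)^h_\frakM$, and the passage from the individual local-henselian conclusions to the global statement over $W(R)$ is routine.
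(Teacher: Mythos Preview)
Your proof is correct and follows essentially the same strategy as the paper's: localize at each maximal ideal $\frakM$, embed the henselization into $W(\hat R_\frakm)$ via Lemma \ref{A1lemma}, and then verify the hypotheses of Proposition \ref{RaynaudGruson} using the family of maps to the various $W(\O_F)$. The paper works with the \emph{strict} henselization $W(R)^{\rm sh}_\frakM$ and accordingly base-changes the residue field to $\bar k'$ before completing, whereas you use the ordinary henselization, but this is a cosmetic difference; one small quibble is that the injectivity $\hat R_\frakm \hookrightarrow \prod_{\tilde x} \O_F$ is not literally the content of Proposition \ref{dejong} (which characterizes integrality inside $R[1/p]$) but rather the standard density of $\O_F$-points in a complete local normal $\Z_p$-flat domain---the paper also uses this fact without further comment.
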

    
  \begin{proof} We first show the statement when $R$ is in addition complete and local, i.e. it satisfies (CN). Then $W(R)$ is local henselian and the result follows from Proposition \ref{RaynaudGruson} 
  applied to the set of homomorphisms $\xi: A=W(R)\to R_\xi=W(\O_F)$
  given as $\xi=W(\ti x)$. 
  
  We now deal with the general case. Under our assumptions, $A=W(R)$ is flat over $\Z_p$.
  Let $\frakM\subset W(R)$ be a maximal ideal with residue field $k'$. We have $I_R\subset \frakM$, since $W(R)$ is $I_R$-adically complete and separated (\cite[Prop. 3]{Zink}). Let 
$\frakm_R=\frakM/I_R\subset W(R)/I_R=R$ be the corresponding maximal ideal of $R$. Our assumptions on $R$ imply that the residue field $k'$ is a finite extension of $k$.
Suppose that $\hat R$ is the completion of $R\otimes_{W}W(\bar k')$ at $\frakm_R\otimes_{W}W(\bar k')$.  Then $W(\hat R)$ is local and strictly henselian.
Denote by $W(R)^{\rm sh}_{\frakM}$ the strict henselization of the localization $W(R)_\frakM$. 
By Lemma \ref{A1lemma} we have
  \[
   W(R)^{\rm sh}_{\frakM}\subset W(\hat R).
  \]
We also have 
\[
\hat R\subset \prod\nolimits_{\xi: R\to \O_F} \O_F
\]
where the product is over all $\xi: R\to \O_F$ that factor through $\hat R$. 
By Proposition \ref{RaynaudGruson} applied to $R_\xi=W(\O_F)$,  the base change $T\otimes_{W( R)}W(R)^{\rm sh}_\frakM$   is 
  a $\Gg$-torsor.  By descent, so is the base change $T\otimes_{W(R)}W(R)_\frakM$ over 
$W(R)_\frakM$.\quash{We are also given that $T[1/p]$ is a $G=\Gg[1/p]$-torsor. 
Consider the ring $W(R)/pW(R)$; the natural map $W(R)/pW(R)\to R/pR$ is surjective 
with nilpotent kernel $I_R/(pW(R)\cap I_R)=I_R/pI_R=I_R/I_R^2$ (see \cite[(3)]{Zink} for the last equality). The spectrum of $W(R)/pW(R)$ agrees with that of $R/pR$.  Since $T[1/p]$ is flat over $W(R)[1/p]$, and}
Since this is true for all maximal ideals $\frakM\subset W(R)$, it follows that $T$ is flat over $W(R)$. The result now   follows as in the proof of Proposition \ref{RaynaudGruson}.
\end{proof}
 
 \begin{Remark}
 {\rm When $R$ satisfies (CN), Corollary \ref{CorRaynaudGruson} also holds with $W(R)$, $W(\O_F)$, replaced by $\hat W(R)$, $\hat W(\O_F)$ respectively.   
 
 }
 \end{Remark}
  \end{para}

\begin{para}
Set $D=\Spec(W(k)\lps u\rps)$, $D^\times=\Spec(W(k)\lps u\rps)-\{(p,u)\}$, with $k$ perfect.  We will use the following purity result of Ansch\"utz:

\begin{thm}\label{pure} (\cite[Theorem 8.4]{An})
Assume $\Gg$ is parahoric. Then, every $\Gg$-torsor over $D^\times$ is trivial.
\end{thm}

\begin{Remark}
{\rm This purity property was previously shown (\cite[Prop. 1.4.3]{KP}) for $k=\bar{\mathbb F}_p$ and all parahoric group schemes $\Gg$ with $G=\Gg\otimes_{\Z_p}\Q_p$ that splits over a tamely ramified extension of $\Q_p$ and has no factors of type $E_8$ (\cite[Prop. 1.4.3]{KP}). This is the only case needed for the proofs of Theorem \ref{introThm2} and Corollary \ref{indThm}. The result   fails for most smooth affine group schemes over $\Z_p$ with reductive generic fiber.

}
\end{Remark}
\end{para}

\ve

 \section{Displays with $\Gg$-structure}\label{s1}
 
 In this section, we define $(\Gg, \M)$-displays and give some basic properties. We also define and study the notion of a locally universal $(\Gg, \M)$-display. Recall $(\Gg, \M)$ is an integral Shimura pair; in particular $\Gg$ is parahoric.

 \subsection{The construction of the modification}

\begin{para}
This subsection contains the main construction needed for the definition of a $(\Gg, \M)$-display.
We assume $R$ is a $p$-adic flat $\O_E$-algebra.  Set $A=W(R)$. 
(If, in addition, $R$ is complete local Noetherian and $p\geq 3$, there is an obvious variant with $A=\hat W(R)$.)

\begin{propconstr}\label{MoPro} Assume that $(\Gg,\M)$ is of integral local Hodge type. 
There is a functor
\[
(\calP, q)\mapsto (\calP, \calQ, \alpha)
\]
from the groupoid of pairs $(\calP, q)$ of a $\Gg$-torsor $\P$ over $\Spec(A)$ together with a $\Gg$-equivariant morphism $q:\P\otimes_{A}R\to \M$, to the groupoid of triples $(\calQ', \calQ, \alpha)$ of two $\Gg$-torsors $\calQ'$, $\calQ$ over $\Spec(A)$ and a $G$-equivariant isomorphism 
\begin{equation}\label{eqModify}
\alpha: \calQ\times_{\Spec(A)}\Spec(A[1/p])\xrightarrow{\sim} (\phi^*\calQ')\times_{\Spec(A)}\Spec(A[1/p])
\end{equation}
over $\Spec(A[1/p])$.
\end{propconstr}

We will also see that there are natural base change transformations for $R\to R'$. Also, the functor is constructed using a choice of an integral local Hodge embedding,
but, up to natural isomorphism, is independent of this choice, see Remark \ref{rem139}.

The isomorphism $\alpha$ allows us to think of $\calQ$ as a ``modification" of $\phi^*\P$ along $p=0$; this modification
 is ``bounded by $\calM$". 
The construction of $(\calP, q)\mapsto (\calQ, \alpha)$ occupies most of this subsection. The main point is
the construction  of a functorial map
\[
 \fraki_\Gg(R): \M(R)\to {\rm Gr}_{\Gg }(R)
\]
(see Proposition \ref{ModifProp}), where ${\rm Gr}_{\Gg }(R)$ is as below. 
\end{para}

\begin{para}

If $X$ is a scheme over $\Spec(A)$, we will write, for simplicity, $X[1/p]$ instead of  $X\times_{\Spec(A)}\Spec(A[1/p])$.

For $A=W(R)$, we will consider the 
 set  
 \[
 {\rm Gr}_{\Gg }(R)=\{(\calQ, \alpha)\}
 \]
 of isomorphism classes of pairs $(\calQ, \alpha)$ of
 \begin{itemize}
 
 \item a $\Gg$-torsor $\calQ$ over $\Spec(A)$,
 
 \item a $\Gg$-trivialization $\alpha: \calQ[1/p]\xrightarrow{\sim} \Gg[1/p]$ of $\calQ[1/p]$ over $\Spec(A[1/p])$. 
 \end{itemize}

 The group $G(A[1/p])$ acts 
 on  ${\rm Gr}_{\Gg }(R)$ by $g\cdot (\calQ, \alpha)=(\calQ, g\cdot \alpha)$. 
 Since $p$ is not a zero divisor in $A$, pairs $(\calQ, \alpha)$ as above form a discrete groupoid.
 
 If, in addition, $R$ is complete local with algebraically closed residue field, then $W(R)$ is local strictly henselian, and
 \[
 {\rm Gr}_{\Gg }(R)\cong G(W(R)[1/p])/\Gg(W(R)).
 \]
The set ${\rm Gr}_{\Gg }(R)$ resembles the set of $R$-points
of an affine Grassmannian of some sort.

\end{para}

\begin{para}

Let $R$ be a $p$-adic flat $\Z_p$-algebra and set $A=W(R)$. 
Since $\phi\circ V=p\cdot {\rm id}_{W(R)}$, we have $\phi(I_R)W(R)= p W(R)$. Hence, we obtain a ring homomorphism
\[
\bar\phi: R= W(R)/ I_R\xrightarrow{\phi} A/pA
\]
induced by the Frobenius $\phi: A\to A$. 
\end{para}

\begin{para}\label{para416}
We first discuss the case $\Gg=\GL(\Lambda)$ and $\calP$ is trivial.

Set $M=\Lambda\otimes_{\Z_p} A$. Let
$\calF\subset \Lambda\otimes_{\Z_p} R=M \otimes_{A} R$ be the $R$-locally direct summand
which corresponds to an $R$-valued point in the Grassmannian ${\rm Gr}(d, \Lambda)$. Set 
 \[
 M_1=({\rm id}_M\otimes {\rm gh}_1)^{-1}(\calF)
 \]
  so that 
 \[
 I_RM\subset M_1\subset M,
 \]
  and take
 $\xU$ to be the image of the map $\phi^*M_1\to \phi^*M$ induced by $M_1\hookrightarrow M$. 

 The quotient $(\Lambda\otimes_{\Z_p}R)/\calF$ is $R$-projective and $W(R)$ is $I_R$-adically complete. By lifting idempotents we can see that, locally on $R$, we can write 
\[
M=\Lambda\otimes_{\Z_p}A=L\oplus T,
\]
with $L$ and $T$ finite projective $A=W(R)$-modules such that $\calF$ is the image of $L\oplus I_RT$
under ${\rm id}_{\Lambda}\otimes {\rm gh}_1: \Lambda\otimes_{\Z_p}A\to \Lambda\otimes_{\Z_p}R$.
(For more details, see the proof of \cite[Lemma 2]{Zink}.)  
Then  
\[
\xU=\phi^*(L)\oplus p\phi^*(T)\subset \phi^*(M)\cong\Lambda\otimes_{\Z_p}A
\]
so that
\[
p\Lambda\otimes_{\Z_p}A\subset \xU\subset \Lambda\otimes_{\Z_p}A.
\]
The module $\xU$ has also the following description:
Base change $\calF\subset \Lambda\otimes_{\Z_p} R$ via $\bar\phi:  R\to A/pA$ to obtain an $A/pA$-submodule
\[
\bar\phi^* \calF\subset \Lambda\otimes_{\Z_p} A/pA
\]
which is locally an $A/pA$-direct summand. Then $\xU$ is the inverse image of $\bar\phi^* \F$ under the reduction 
$\Lambda\otimes_{\Z_p}A\to \Lambda\otimes_{\Z_p}A/pA$.

The $A$-module $\xU$ gives a $\GL(\Lambda)$-torsor $\calQ_{\GL}=\underline{\rm Isom}(\xU, \Lambda_A)$
over $A$,  together with a trivialization $\alpha$ of $\calQ_\GL[1/p]$ over $A[1/p]$. 
Sending
\[
\calF\mapsto (\calQ_{\GL}, \alpha)
\]
 gives a functorial (in $R$) map
\[
\fraki_\GL: {\rm Gr}(d, \Lambda)(R)\to  {\rm Gr}_{\GL(\Lambda) }(R).
\]
This satisfies 
\[
\fraki_{\GL}({\rm gh}_1(g)\cdot x)= \phi(g)\cdot \fraki_{\GL}(x),
\]
 for $g\in \GL(\Lambda)(A)$, $x\in {\rm Gr}(d, \Lambda)(R)$.
 
\begin{para} We will now explain how to 
 extend the construction above from $\GL(\Lambda)$ to a parahoric $\Gg\subset \GL(\Lambda)$.
 We assume that $R$ is a $p$-adic flat $\O_E$-algebra. Recall $A=W(R)$.

\begin{prop}\label{ModifProp}
Suppose that $(\Gg, \calM)$ is of integral local Hodge type.
There are functorial (in $R$) maps
 \[
 \fraki_\Gg(R): \M(R)\to {\rm Gr}_{\Gg }(R)
  \]
which satisfy 
 \begin{equation}\label{conjphi}
\fraki_{\Gg}({\rm gh}_1(g)\cdot x)=\phi(g)\cdot \fraki_{\Gg}(x),
 \end{equation}
 for $g\in \Gg(A)$, $x\in \calM(R)$.  
 \end{prop}

\begin{proof} We choose a integral local Hodge embedding $\iota: \Gg\hookrightarrow \GL(\Lambda)$ which induces
$\iota_*: \calM\to {\rm Gr}(d, \Lambda)_{\O_E}$.
Let $a\in \calM(R)$ be an $R$-valued point of $\calM$.  It will be enough to give $ \fraki_\Gg(R)$ for $R=\hat B$, the $p$-adic completion of $B$, where
$\Spec(B)\subset \calM$ varies over affine charts of $\calM$ and $a$ the tautological point. Recall that $\calM$ is normal, flat and proper
over $\O_E$. So, we can assume that $R$ satisfies (N). The image $\iota_*(a)$ is an $R$-valued 
point of the Grassmannian ${\rm Gr}(d, \Lambda)_{\O_E}$ and is given by a locally direct summand
\[
\calF=\calF_a\subset \Lambda\otimes_{\Z_p} R
\]
In what follows, we will omit $a$ from the notation.
The construction above for $\GL(\Lambda)$, applied to $\F$, gives
$
p\Lambda\otimes_{\Z_p}A\subset \xU\subset \Lambda\otimes_{\Z_p}A.
$
Notice that $\xU^\otimes\subset \xU^{\otimes}[1/p]=\Lambda^\otimes_A[1/p]$. We have 
$s_a\otimes 1\in \Lambda^\otimes_A\subset \Lambda^\otimes_A[1/p]$.
%We have:

\begin{prop}\label{propQconstr}

a) The tensors $s_a\otimes 1 $  belong to $\xU^\otimes$.

b) The scheme of isomorphisms that respect the tensors
\[
\calQ:=\underline{\rm Isom}_{s_a\otimes 1, s_a\otimes 1 }(\xU, \Lambda_A)
\]
is a $\Gg$-torsor over $\Spec(A)$.

c) Since  $\xU[1/p]=\Lambda_A[1/p]$, we have a trivialization
\[
\alpha: \calQ[1/p]\xrightarrow{\ \simeq\ } G_{A[1/p]}.
\]

\end{prop}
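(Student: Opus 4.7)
The plan is to handle (c) first, then (a) and (b) together by a cocharacter-based local trivialization.

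For (c), the identity $U[1/p] = \Lambda_{A[1/p]}$ is immediate since $U = \phi^*(L) \oplus p\phi^*(T)$ and $\phi^*(L) \oplus \phi^*(T) = \Lambda_A$. Consequently $\calQ[1/p] = \underline{\rm Isom}_{(s_a)}(\Lambda_{A[1/p]}, \Lambda_{A[1/p]})$, which is identified with $G_{A[1/p]}$ via Proposition \ref{torsorRep} applied to the trivial $G$-torsor given by the identity section.

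For (a) and (b), both claims may be checked fpqc-locally on $\Spec(A)$; for (b), Corollary \ref{CorRaynaudGruson} further reduces the torsor property to the case $A = W(\O_F)$ with $F/W(k)[1/p]$ a finite extension. The strategy is to produce, fpqc-locally on $\Spec(R)$, a cocharacter $\mu_R : \Gm \to \Gg_R$ in the conjugacy class $\{\mu\}$ whose weight-zero filtration on $\Lambda_R$ (via $\iota$) coincides with $\calF$. Over the generic fiber $\Spec(R[1/p])$ this comes from $X_\mu = G/P_\mu$; extending it integrally across the special fiber of $\calM$ uses the definition of $\calM$ as the flat closure in $\Gr(d,\Lambda)_{\O_E}$, the smoothness of $\Gg$, and the purity hypothesis (P). A Hensel-type lifting, using smoothness of $\Gg$ and $I_R$-adic completeness of $A = W(R)$, then produces a cocharacter $\mu_A : \Gm \to \Gg_A$ compatible with $\mu_R$ via $\bar\phi$. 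Choosing the splitting $\Lambda_A = L \oplus T$ to coincide with the $\mu_A$-weight decomposition, a direct computation gives
\[
U = \phi^*(L) \oplus p\phi^*(T) = \mu_A(p) \cdot \Lambda_A,
\]
where $\mu_A(p) \in G(A[1/p])$ acts on $\Lambda_A$ via $\iota$.

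Granting this identity, the remainder is short. For (a): since $s_a$ is $\Gg$-invariant and $\mu_A$ factors through $\Gg_A$, we have $\mu_A(p)(s_a) = s_a$ in $\Lambda^\otimes_A$, so $s_a = \mu_A(p)(s_a) \in \mu_A(p)(\Lambda_A^\otimes) \subset U^\otimes$. For (b): the map $\mu_A(p)^{-1} : U \xrightarrow{\sim} \Lambda_A$ is an $A$-isomorphism preserving the $s_a$, hence defines a section of $\calQ$ over the fpqc cover; the stabilizer of such a tensor-preserving section in $\GL(\Lambda_A)$ is $\Gg_A$ by the defining property of $\Gg$, giving a local trivialization of $\calQ$ as a $\Gg$-torsor, and descent concludes the argument. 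The main obstacle in this approach is the integral existence of the cocharacter $\mu_R$ over the special fiber of $\calM$: generically it reduces to the homogeneity of $X_\mu$, but integrally one must appeal to the specific local-model construction of \cite{PZ} and the purity condition (P), which ensures that the relevant $\Gg$-torsors on the punctured spectrum $D^\times$ are trivial and hence that no obstruction to lifting the cocharacter occurs.
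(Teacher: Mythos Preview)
Your handling of (c) is fine, and your reduction of (b) to the DVR case via Corollary~\ref{CorRaynaudGruson} is correct and matches the paper. The gap is in the central step: the existence of the cocharacter $\mu_R:\Gm\to\Gg_R$ splitting $\calF$.

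Your justification for this existence does not hold up. The purity hypothesis~(P) concerns extending $\Gg$-torsors from $D^\times=\Spec(W(k)\lps u\rps)\setminus\{(p,u)\}$ to $D$; it says nothing about producing cocharacters that split a given point of $\calM$. Smoothness of $\Gg$ gives infinitesimal lifting, not extension of a cocharacter from $R[1/p]$ across the special fiber. And the description of $\calM$ as a flat closure in the Grassmannian does not make $\calM$ a $\Gg$-homogeneous space integrally: the special fiber of $\calM$ typically has several $\Gg_k$-orbits, and an $R$-point landing in a non-open orbit need not arise from any cocharacter of $\Gg_R$, even fpqc-locally. So the identity $U=\mu_A(p)\cdot\Lambda_A$ that you want is not available in general, and the rest of your argument rests on it.

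The paper proceeds differently. For (a) it offers a direct argument (spelled out in the proof of Lemma~\ref{Qconnection}): regard each $s_a$ as a $\Gg$-invariant endomorphism $\Lambda^{\otimes n}\to\Lambda^{\otimes n}$; over $R[1/p]$ the filtration comes from $X_\mu$, so $G$-invariance forces $s_a$ to preserve it, and by flatness this persists over $R$. Writing $s_a=\oplus_i s_a^i$ according to the decomposition $\Lambda_A=L\oplus T$ and tracking the explicit form $U^{\otimes n}=\oplus_i\, p^{n-i}\otimes\phi^*(L^{\otimes i}\otimes T^{\otimes(n-i)})$ shows $s_a\otimes 1\in U^\otimes$ without ever producing a cocharacter of $\Gg$ over $A$. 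For (b), after the Raynaud--Gruson reduction to $W(\O_F)$, the paper invokes \cite[Cor.~3.2.11]{KP}, whose argument uses the specific structure of the local model (its realization inside an affine Grassmannian and the resulting group-theoretic description of $\O_F$-points) rather than a naive extension of a generic-fiber cocharacter. That is the input you are missing.
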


\begin{proof}  Using purity for $\Gg$ (Theorem \ref{pure}), we see that the proof of \cite[Lemma 3.2.9]{KP} goes through in our situation and gives a) and b) after base-changing to ${\mathcal O}_K$, for all $\xi: R\to {\mathcal O}_K$. By Proposition \ref{RaynaudGruson} and Corollary \ref{CorRaynaudGruson}
this now implies parts a) and b), cf. the proof of \cite[Cor. 3.2.11]{KP}.
(This uses that $R$ satisfies (N), in particular that it is normal.) 
Part c) is easy.
 \end{proof}
 \smallskip
 
 The proof of Proposition \ref{ModifProp} now follows from
 Proposition \ref{propQconstr} above: Indeed, we set $\fraki_\Gg(a)=(\calQ, \alpha)$, with $\calQ$ and $\alpha$ as above.
 This gives the desired map.
 \end{proof}
 
 \end{para}
   
    \begin{Remark}\label{indpara}
{\rm The maps $\fraki_\Gg(R)$ are independent of the embedding  $\iota$. To see this suppose that $\iota': \Gg\hookrightarrow \GL(\Lambda')$ is another integral local Hodge embedding which gives $\iota'_*: \M\to {\rm Gr}(d', \Lambda')_{\O_E}$. We can consider the product 
 \[
 \iota''=\iota\times \iota': \Gg\xrightarrow{\Delta} \Gg\times \Gg\hookrightarrow \GL(\Lambda)\times  \GL(\Lambda')\subset  \GL(\Lambda\oplus \Lambda').
 \]
This induces
 \[
\iota''_*:  \M\xrightarrow{\Delta }\M\times_{\O_E}\M\to {\rm Gr} (d, \Lambda)\times_{\O_E} {\rm Gr}(d', \Lambda')\subset {\rm Gr}(d+d', \Lambda\oplus \Lambda')_{\O_E}.
 \]Consider $a\in \calM(R)$ and set $\F=\F_a$, $\F'=\F'_a$, $\F''=\F''_a$, for the submodules which correspond to the points $\iota_*(a)$, $\iota'_*(a)$, $\iota''_*(a)$, in the Grassmannians.
 By the construction, we have $\calF''=\calF\oplus \calF'$, $\xU''=\xU\oplus \xU'$, and the projections give $\xU''\to \xU$, $\xU'\to \xU$. These maps induce 
 $\Gg$-equivariant morphisms $\calQ\to \calQ''$ and $\calQ'\to \calQ''$ which identify these $\Gg$-torsors.}
 \end{Remark}

\end{para}

 \begin{para}\label{qMlocTor}
 
 {\sl Proof of \ref{MoPro}.}
 
 We can now give the construction of the modification. We assume that $(\Gg,\M)$ is of integral local Hodge type.
 We choose a integral local Hodge embedding $\iota: \Gg\hookrightarrow \GL(\Lambda)$ which induces
$\iota_*: \calM\to {\rm Gr}(d, \Lambda)_{\O_E}$. Suppose that  $\calP$ is a $\Gg$-torsor over $A=W(R)$ given together with a $\Gg$-equivariant morphism
\[
q: \calP\otimes_{A}R\to \M.
\]
The case that $\P$ is a trivial $\Gg$-torsor follows immediately from the proof of Proposition \ref{ModifProp}:
If $s$ is section of the $\Gg$-torsor $\calP$ then the composition $a(s)=q\cdot (s\otimes_AR)$ is an $R$-valued point of $\M$. 
The proof of Proposition \ref{ModifProp} gives a pair $\fraki_\Gg(a(s))$ of a 
new $\Gg$-torsor $\calQ_s$ with a trivialization $\alpha_s$ of $\calQ_s[1/p]$.

Let us discuss the general case: Note that $A=W(R)$ is complete and separated in the $I_R$-adic topology (\cite[Prop. 3]{Zink}), so $(W(R), I_R)$ is a henselian pair. Hence, since $\Gg$ is smooth, $\calP$ acquires a section over $W(R')$, where $R'$ is an \'etale cover of $R$ (cf. \cite[Prop. (B.0.2)]{BP}); we can make sure that $R'$ is also $p$-adic. The construction above and the equivariance property (\ref{conjphi}) combined with descent as in \cite[1.3]{Zink}, shows that $\calP$ together with $q$,
gives a $\Gg$-torsor $\calQ$ together with an isomorphism of $G$-torsors 
\begin{equation}\label{eqModify}
\alpha: \calQ[1/p]\xrightarrow{\sim} \phi^*\calP[1/p]
\end{equation}
over $A[1/p]$. 
%Using language from the theory of shtuka, we call $\calQ$   a ``modification of $\phi^*\calP$ along the divisor $p=0$,
 %bounded by $\M$". 
 Explicitly, if $\P(\iota)=M$ is the corresponding finite projective $W(R)$-module with tensors $(m_a)$
 (see Proposition \ref{torsorRep}), then $\iota_*\cdot q$ gives 
 \[
 \calF\subset M\otimes_{W(R)}R.
 \]
 Set $M_1=({\rm id}_M\otimes {\rm gh}_1)^{-1}(\calF)$ so that $I_RM\subset M_1\subset M$ and take
 $\xU$ to be the image of the map $\phi^*M_1\to \phi^*M$ induced by $M_1\hookrightarrow M$. As in \S\ref{para416}, we
 obtain $p\phi^*(M)\subset \xU\subset \phi^*(M)$. Then,  as in Proposition \ref{propQconstr}, we have $\phi^*m_a\in \xU^{\otimes}$ and 
 \[
 \calQ=\underline{\rm Isom}_{(\phi^* m_a ), (s_a)}(\xU, \Lambda_A).
 \]\endproof

\begin{Remark}\label{rem139}
{\rm Note that in the above, the pair $(\calQ, \alpha: \calQ[1/p]\xrightarrow{\sim} \phi^*\calP[1/p])$ only depends on $\calP$, $\M$ and $q$  and is independent of $\iota$, up to unique  isomorphism; this follows from \S\ref{indpara}.  In fact, the argument gives that the functor of \ref{MoPro} is, up to natural isomorphism, independent of the choice of the integral local Hodge embedding $\iota$.}
\end{Remark}
  
 \end{para}

 \begin{Remark}\label{conjEmb}
 {\rm a) The above applies to $\M=\Mloc$, where $\Mloc=\Mloc{(\Gg, \{\mu\})}$
 are the local models of \cite{PZ}, when $\Gg(\Z_p)$ is connected parahoric, $p\nmid |\pi_1(G_{\rm der}(\bar\Q_p))|$,
 and there is a local Hodge embedding $\iota: G\hookrightarrow \GL_n$ with $\Gm\subset \iota(G)$.
 This follows from Proposition \ref{propLHT}.

 b) We conjecture that, for the local models $\calM=\Mloc$,  the maps $\fraki_\Gg$ exist in general (without assuming 
any Hodge type condition) and that Proposition \ref{MoPro} still holds:
 
  More precisely, suppose that $\Mloc=\Mloc{(\Gg, \{\mu\})}$
 is the local model over $\Spec(\O_E)$ conjectured to exist by Scholze \cite[Conjecture 21.4.1]{Schber}.
 Then, we expect that there are 
canonical functorial injective maps
 \[
\fraki_{\Gg,\mu}(R): \Mloc(R)\to {\rm Gr}_{\Gg }(R),
 \]
  for $R$ $p$-adic flat over $\O_E$, which also satisfy the equivariance
 property (\ref{conjphi}). 
 
 (One can speculate that the maps $\fraki_{\Gg,\mu}$ come from natural maps
 \[
  \Mloc\to {\rm Gr}^\Prism_\Gg,
 \]
 where ${\rm Gr}^\Prism_\Gg$ is a ``prismatic affine Grassmannian" for $\Gg$.)
 }
 \end{Remark}

\subsection{$(\Gg, \calM)$-displays.} 

\begin{para}
We now give the definition of a $(\Gg, \M)$-display over $R$, where 
$R$ is a $p$-adic flat $\O_E$-algebra.  We assume that $(\Gg, \M)$ is of integral local Hodge type.

 \begin{Definition}\label{DEFDISPLAY}
 A $(\Gg, \M)$-display over $R$ is a triple $\calD=(\calP, q, \Psi)$ of:
  
  \begin{itemize}
  \item  A $\Gg$-torsor $\calP$  over $W(R)$,
  
  \item  a $\Gg$-equivariant morphism $q:\calP\otimes_{W(R)}R\to \M$ over $\O_E$,
     
  \item a $\Gg$-isomorphism $\Psi: \calQ\xrightarrow{\sim} \calP$  where $\calQ$ is the 
  $\Gg$-torsor over $W(R)$ which is the modification of $\phi^*\calP$ given by $(\calP, q)$ in \ref{MoPro}.
  \end{itemize}

 \end{Definition}
 
Recall that, by   \ref{MoPro}, the pair $(\calP, q)$ gives   $\calQ$ together with an isomorphism (\ref{eqModify})
\[
\alpha: \calQ[1/p]\xrightarrow{\sim}\phi^*\calP[1/p].
\]
Composing $\alpha^{-1}$ with $\Psi[1/p]: \calQ[1/p]\xrightarrow{\sim} \calP[1/p]$
gives an isomorphism of $\Gg$-torsors over $W(R)[1/p]$
\[
\Phi: \phi^*\calP[1/p]\xrightarrow{\sim} \calP[1/p]
\]
which is also attached to the $(\Gg, \calM)$-display $\calD=(\calP, q, \Psi)$.
\end{para}

\begin{para}\label{remark323}{
 Suppose $(\Gg, \{\mu\})$ is a pair of a parahoric group scheme and a conjugacy class of a minuscule cocharacter $\mu$ of
 $\Gg_{\bar\Q_p}$.  Assume Scholze's conjecture \cite[Conj. 21.4.1]{Schber} on the existence 
 of the local model $\Mloc=\Mloc{(\Gg, \{\mu\})}$.  
  
 Suppose that either $(\Gg, \Mloc)$ is of integral local Hodge type, or more generally, that  
 the conjecture of Remark \ref{conjEmb} (b) is true for  $(\Gg, \{\mu\})$. Then the construction of the modification $\calQ$ 
 from $(\calP, q)$ goes through and the definition of a $(\Gg, \Mloc)$-display makes sense. In this case, instead of  ``$(\Gg, \Mloc)$-display", we will simply say ``$(\Gg, \{\mu\})$-display".
}
 \end{para}
 
 \begin{para}\label{global}
 Assume now that $\frakX$ is a $p$-adic formal scheme which is flat and formally of finite type 
 over ${\rm Spf}(\O_E)$. By Zink's Witt vector descent \cite[\S1.3, Lemma 30]{Zink}, there is a sheaf of rings  
 $W(\O_{\frakX})$ over $ \frakX $ such that for every open affine formal subscheme ${\rm Spf}(R)\subset \frakX$, we have $\Gamma(\Spf(R), W(\O_{\frakX}))=W(R)$. It now makes sense to give the natural extension of the above definition:  
 A
$(\Gg, \M)$-display over $\frakX$ is a triple $\calD=(\calP, q, \Psi)$ with the data 
 $\calP$, $q$,  $\Psi$, as above given over $W(\O_{\frakX})$.
 \end{para}

\subsection{Dieudonn\'e $(\Gg, \M)$-displays.}
  \begin{para} 
  
We now assume that $p$ is odd and that $R$ is in addition complete local Noetherian. We continue to suppose that $(\Gg, \M)$ is of integral local Hodge type.
 
 \begin{Definition}
 A Dieudonn\'e $(\Gg, \M)$-display over $R$ is a triple $\calD=(\calP, q, \Psi)$ of
 a $\Gg$-torsor $\calP$  over $\hat W(R)$, a $\Gg$-equivariant morphism \[
 q:\calP\otimes_{\hat W(R)}R\to \M
 \] over $\O_E$, and
  a $\Gg$-isomorphism $\Psi: \calQ\xrightarrow{\sim} \calP$  where $\calQ$ is the 
  $\Gg$-torsor over $\hat W(R)$ induced by $q$ in \ref{MoPro} (applied to $A=\hat W(R)$).
  \end{Definition}
  
    \end{para}
    
    \begin{para}\label{WvsWhat}
  Note that a Dieudonn\'e $(\Gg, \M)$-display over $R$, produces a $(\Gg, \M)$-display over $R$
  by base change along the inclusion $\hat W(R)\hookrightarrow W(R)$. Proposition \ref{Qconnection} holds for Dieudonn\'e $(\Gg, \M)$-displays over $R$ with $W(R)$ and $W(R/\fraka_R)$ replaced by $\hat W(R)$ and $\hat W(R/\fraka_R)$.
Most of the notions defined for $(\Gg, \M)$-displays, for example, the notion of rigid section and of locally universal, have obvious analogues for Dieudonn\'e $(\Gg, \M)$-displays. The obvious variant of Proposition \ref{locally universalProp2} for Dieudonn\'e displays holds. We will sometimes refer to these statements when we are really using 
their $\hat W$-variants instead, without explicitly alerting the reader.   
   \end{para}

   \subsection{Relation with Zink's displays.}\label{compaZink}
   
   \begin{para}
   
    In the next paragraph, we relate our notion of a (Dieudonn\'e) $(\Gg, \M)$-display over $R$ to the classical notion of Zink
   (\cite{Zink}, \cite{ZinkCFT}). This involves the use of the integral local Hodge embedding. To fix ideas, we only discuss Dieudonn\'e displays and for that we assume $p$ is odd and $R$ is also complete local Noetherian. 
   \end{para}
   
   \begin{para}\label{compaZink2}
    
 Suppose that $\calD=(\calP, q, \Psi)$ is a Dieudonn\'e $(\Gg, \M)$-display over $R$, and that $\iota: \Gg\hookrightarrow \GL(\Lambda)$ is an integral local Hodge embedding. 
 
 We set $M=\calP(\iota)$ which is a finite projective $\hat W(R)$-module of rank equal to ${\rm rank}_{\Z_p}(\Lambda)$. Then $q$ composed with $\iota$ produces $\calP\otimes_{\hat W(R)}R\to {\rm Gr}(d, \Lambda)_{\O_E}$. This morphism gives a locally direct summand
 $\calF\subset M\otimes_{\hat W(R)}R$. Let us denote by $M_1$  the inverse image of $\calF$ under the reduction homomorphism $M\to M\otimes_{\hat W(R)}R$. This is a $\hat W(R)$-module with $\hat I_R M\subset M_1\subset M$. 
 As in \cite[3.1.4]{KP}, we denote by $\tilde M_1$ the image of $\phi^*M_1$ under the $\hat W(R)$-homomorphism
 \[
 \phi^*M_1\to \phi^*M
 \]
 which is induced by the inclusion $M_1\hookrightarrow M$.  We have
 \[
p\phi^*M\subset \tilde M_1\subset \phi^*M.
 \]
 Then, our construction of $\calQ$ from $(\calP, q)$ 
 implies an identification $\tilde M_1=\calQ(\iota)$. (See \S\ref{qMlocTor},  $\tilde M_1$ corresponds to $U$ there.) The isomorphism $\Psi$ gives an isomorphism 
 \[
 \Psi(\iota): \tilde M_1\xrightarrow{\sim} M.
 \]
 We denote $\Phi_1: \phi^*M_1\to \tilde M_1\xrightarrow{\sim} M$ the composition.
 Note here that we also have $\Phi=\Phi(\iota): \phi^*M[1/p]\xrightarrow{\sim} M[1/p]$ given by $\phi^*M[1/p]=\tilde M_1[1/p]$ composed with $\Psi(\iota)[1/p]$. In fact, we see that
 \[
 M\subset \Phi(\phi^*M) \subset \frac{1}{p}M.
 \]
  We can now consider the triple $(M, M_1, \Phi_1)$.  By \cite[3.1.3, Lemma 3.1.5]{KP}, this triple defines a Dieudonn\'e display over $R$ in the sense of Zink \cite{ZinkCFT}.
 (Recall that we assume that $R$ is flat over $\Z_p$.) 
 
 It is useful to compare the notations here to those in display theory (e.g. \cite{ZinkCFT}): $\Phi_1$ here corresponds  to the linear map $F_1^\#$ induced by the semi-linear map denoted there by $F_1=V^{-1}$. The 
 linear map $F^\#$ that corresponds to the (semi-linear) Frobenius $F$
 of Zink's display is given here as\footnote{Note $\Phi=p^{-1}\cdot F^\sharp$ says that the Frobenius of the classical theory is here scaled by $p^{-1}$,  cf. \cite[p. 158]{Schber}.}
 \[
F^\#=p\cdot \Phi: \phi^*M\to M,
 \]
 so $\Phi=p^{-1}\cdot F^\#$, $\Phi_1=F^\#$.

  We will denote the Dieudonn\'e display 
  \[
  (M, M_1, F_1, F)
  \]
   by $\calD(\iota)$, since it is derived from $\calD$ and $\iota: \Gg\hookrightarrow \GL(\Lambda)$. By \cite{ZinkCFT}, there is a corresponding $p$-divisible group $\sG_R={\rm BT}(\calD(\iota))$ over $R$. By \cite{ZinkCFT}, \cite[Theorem B]{Lau},  
  \[
 (M, M_1, F_1, F)= \DD(\sG_R)(\hat W(R))
  \]
  where $\DD(\sG_R)$ denotes the (filtered) \emph{covariant} Dieudonn\'e crystal of $\sG_R$
  (the Frobenius is given by $F$.)
  Then, the tangent space of $\sG_R$ is canonically identified with the $R$-module $M/M_1$.
  Since $F$ is determined by $F_1$, we will write this as
  \[
  (M, M_1, F_1)
  \]
  in what follows.
  
    \end{para}

 \subsection{Rigidity and locally universal displays.}
 
  \begin{para}
  In this subsection, we assume until further notice that $R$ satisfies (CN), in particular it is normal and complete local Noetherian.
   
 We also continue to assume that $(\Gg, \calM)$ is of integral local Hodge type  and that
  $\calD=(\calP, q, \Psi)$ is a $(\Gg, \M)$-display over $R$.
  Under our assumptions on $R$, the $\Gg$-torsors $\calP$, $\calQ$ over $W(R)$ are trivial.
  We denote by $\calD_0=(\calP_0, q_0, \Psi_0)$ the display over $k$ obtained by 
  reduction of $\calD=(\calP, q, \Psi)$ 
 modulo $\frakm_R$.  
 \end{para}
    
 \begin{para}

Denote by $\frak m_R$ the maximal ideal of $R$. Set $\fraka_R=\frakm_R^2+(\pi_E)$,
where $\pi_E$ is a uniformizer of $\O_E$.
Observe that the Frobenius $\phi$ factors as 
\[
W(R/\fraka_R)\to W(k)\xrightarrow{\phi} W(k)\to W(R/\fraka_R).
\]

\begin{prop}\label{Qconnection}
There is a canonical isomorphism of $\Gg$-torsors
\begin{equation}\label{canonicaliso}
c: \calQ\otimes_{ W(R)}W(R/\fraka_R)\xrightarrow{\sim} \calQ_0\otimes_{W(k)}W(R/\fraka_R),
\end{equation}
where $\calQ$, resp. $\calQ_0$, is the $\Gg$-torsor for the display $\calD$, resp. $\calD_0$, as in
Definition \ref{DEFDISPLAY}. 
\end{prop}
\begin{proof}
Note that \cite[Lemma 3.1.9]{KP} gives the corresponding statement for classical displays and $\O_E=\Z_p$. Recall that 
the $\Gg$-torsor $\calQ$ is given as in the paragraph \ref{qMlocTor}, using the corresponding $\F\subset M\otimes_{W(R)}R$ and $\xU$.   We denote by $M_0$, $M_{1,0}$, $\xUo$, the modules associated to the display $\calD_0$ over $k$ obtained from $\calD$ by base change, as above.  Let us write $M=L\oplus T$, with $L$ and $T$ free $W(R)$-modules, such that $\F$ is given by $L$ modulo $I_{R}$.
Then $M_1=L\oplus I_RT$, so $\F$ gives the filtration  $I_RM\subset M_1\subset M$.
Then, as in the proof of \cite[Lemma 3.1.9]{KP}
\[
\xU\otimes_{W(R)}W(R/\fraka_R)=\phi^*(L)\oplus (p\otimes \phi^*(T))
\]
and $\phi^*(L)\simeq \phi^*(L_0)\otimes_{W(k)}W(R/\fraka_R)$, $\phi^*(T)\simeq \phi^*(T_0)\otimes_{W(k)}W(R/\fraka_R)$. Here, 
 we write $p\otimes -$ for $p\Z_p\otimes_{\Z_p} -$, and we have $L_0=L\otimes_{W(R)}W(k)$, $T_0=T\otimes_{W(R)}W(k)$.
This gives the isomorphism of \cite[Lemma 3.1.9]{KP}
\begin{equation}\label{connU}
c: \xU\otimes_{W(R)}W(R/\fraka_R)\xrightarrow{\simeq} {\xUo}\otimes_{W(k)}W(R/\fraka_R)
\end{equation}
which is independent of the choice of the normal decomposition $L\oplus T$. 
Using Proposition
\ref{torsorRep} we see that it is enough to show that $c$ preserves the tensors that correspond to $s_a$. As in \S\ref{para321},  we can assume these are of
 the form $s_a:\Lambda^{\otimes n}\to \Lambda^{\otimes n}$, with $n\geq 1$. These induce $m_a: M^{\otimes n} \to M^{\otimes n}$ which induce $u_a:=\phi^*(m_a) : \xU^{\otimes n}\to \xU^{\otimes n}$. In this situation, we have  to show that $c$ is compatible with $u_a$ in the sense that
the obvious diagram 
\begin{equation*}
\xymatrix{ \xU^{\otimes n}\otimes_{W(R)}W(R/\fraka_R)\ar[r]^{\ \ u_a\otimes 1\ \ }\ar[d]_{c}^{\simeq} & \xU^{\otimes n}\otimes_{W(R)}W(R/\fraka_R)\ar[d]^{\simeq}_{c}\\
   \xUo^{\otimes n}\otimes_{W(k)}W(R/\fraka_R)\ar[r]^{\ \ (u_a)_0\otimes 1\ \ } &\xUo^{\otimes n}\otimes_{W(k)}W(R/\fraka_R).}
\end{equation*}
is commutative. We start by giving a description of $\xU^{\otimes n}$. 

The filtration $I_RM\subset M_1\subset M$ induces a filtration on $M^{\otimes n}$ and we are interested in the $W(R)$-submodule
\begin{equation}\label{Ndecomp}
N:=\oplus_{j=0}^n (L^{\otimes n-j}\oplus I^j_R T^{\otimes j})\subset M^{\otimes n}
\end{equation}
which is the image of $M_1^{\otimes n}\to M^{\otimes n}$ (and so is independent of the normal decomposition $M=L\oplus T$.)
 (Note that when $n\geq 2$, the map $M_1^{\otimes n}\to M^{\otimes n}$ is rarely injective. Also note that   $I_R^m=p^{m-1}I_R$, if $m\geq 1$ (\cite[(7)]{Zink}), and $\phi(I_R^m)=(p^m)$.) The image of the map $\phi^*N\to \phi^*M^{\otimes n}$ induced by the inclusion $N\hookrightarrow M^{\otimes n}$, is $\xU^{\otimes n}$.

Next, we show that $m_a :  M^{\otimes n}\to M^{\otimes n}$   preserves $N$, i.e.  restricts to   $m_a: N\to N$. The rough idea is that this should hold because the point in the Grassmannian corresponding to $\F$ is in the closure of the $G$-orbit of the cocharacter $\mu$ and the tensors $m_a$ are fixed by the group $G$. More precisely, we show  $m_a(N)\subset N$ using ``restriction to $\O_F$-points"  
as follows: Suppose $\xi: R\to \O_F$ is a local $\O_E$-homomorphism. 
Denote by  $M_{\O_F}$ and $N_{\O_F}$    the corresponding
$W(\O_F)$-modules for the display over $\O_F$ obtained by base change of $\calD$ by $\xi$. As in the proof of \cite[Lemma 3.2.6]{KP},  we see,
using that $\xi^*\F$ is given by a $G$-cocharacter conjugate to $\mu$,  that $\xi^*m_a$ preserves
$N_{\O_F}\subset M_{\O_F}^{\otimes n}$. We can now deduce that 
$m_a: M^{\otimes n} \to M^{\otimes n}$ preserves $N$:  It is enough to check that
certain elements of $W(R)$ which are given as the coefficients of images of $m_a$ in a basis given by 
the decomposition (\ref{Ndecomp}) lie in $I^m_R=p^{m-1}I_R$, while, by the above, we know that their images in $W(\O_F)$ lie in 
$p^{m-1}I_{\O_F}$, for all such $\xi: R\to \O_F$. But this is true by a simple extension of the argument in the proof of Lemma \ref{dejonglemma}.

Now consider the commutative diagram
\begin{equation*}
\xymatrix{\phi^*N\otimes_{W(R)}W(R/\fraka_R)\ar[r]^\alpha\ar[d]_{c'}^{\simeq} &\xU^{\otimes n}\otimes_{W(R)}W(R/\fraka_R)\ar[r]\ar[d]_{c}^{\simeq} & \phi^*M^{\otimes n}\otimes_{W(R)}W(R/\fraka_R)\ar[d]^{\simeq}_{c''}\\
\phi^*N_0\otimes_{W(k)}W(R/\fraka_R)\ar[r]^{\alpha_0} &\xUo^{\otimes n}\otimes_{W(k)}W(R/\fraka_R)\ar[r] &\phi^*M^{\otimes n}_0\otimes_{W(k)}W(R/\fraka_R).}
\end{equation*}
Here $c'$, $c''$ are the canonical isomorphisms obtained by the factorization $W(R/\fraka_R)\to W(k)\to W(k)\to W(R/\fraka_R)$
of the Frobenius $\phi$ above. We have $\alpha(\phi^*(m_a))=u_a$. The fact that $c$ is compatible with the tensor $u_a$ now follows from the above, the functoriality of $c'$ and the fact that $\alpha$ is surjective.
\end{proof}

\end{para}

   \begin{para} \label{paraPsig} 

Continuing with the same assumptions, we have:
 
 \begin{Definition} 
 A section of $s$ of the $\Gg$-torsor $\calP$ is called rigid in the first order at $\frakm_R$ when, under the isomorphism
 of Proposition \ref{Qconnection},
 \[
 \Psi^{-1}(s)\, {\rm mod}\, W(\fraka_R)=\Psi^{-1}_0(s_0)\otimes 1,
 \] 
 where, again, the subscript $0$   signifies reduction   
 modulo $\frakm_R$.  
 \end{Definition}
 
In other words, we are asking that  the diagram
 \begin{equation*}
\xymatrix{ \calQ\otimes_{W(R)}W(R/\fraka_R)\ar[r]^{\ \Psi\ }_{\simeq}\ar[d]_{c}^{\simeq} & 
\P\otimes_{W(R)}W(R/\fraka_R)\ar[d]_{\Psi_0\cdot c\cdot \Psi^{-1}}^{\simeq}\ar[r]^{(s_{\fraka_R})^{-1}} &\Gg\otimes_{\Z_p}W(R/\fraka_R)\ar[d]_{\rm id}\\
   \calQ_0\otimes_{W(k)}W(R/\fraka_R)\ar[r]^{\ \Psi_0\ }_{\simeq} &\P_0\otimes_{W(k)}W(R/\fraka_R)\ar[r]^{s^{-1}_0} &\Gg\otimes_{\Z_p}W(R/\fraka_R)}
\end{equation*}
commutes. (In this,  we write $s_{\fraka_R}=s\otimes_{W(R)}W(R/\fraka_R)$ for simplicity.)

Given any section $s: \Gg\otimes_{\Z_p}W(R )\to \P$, the composition 
\[
g=s_0^{-1}\cdot (\Psi_0\cdot c\cdot \Psi^{-1})\cdot s_{\fraka_R}: \Gg\otimes_{\Z_p}W(R/\fraka_R)\to \Gg\otimes_{\Z_p}W(R/\fraka_R)
\]
is given by an element $g\in \Gg(W(R/\fraka_R))$ which reduces to the identity in $\Gg(W(k))$, i.e. with $g_0=1$.
Since  $\Gg$ is smooth, $\Gg(W(R))\to \Gg(W(R/\fraka_R))$ is surjective, and we can always find $h\in \Gg(W(R))$
with $h\,{\rm mod}\, W(\fraka_R)=g^{-1}$. Then $h\cdot s$ is rigid in the first order at $\frakm_R$. Hence,
 if for example $k$ is algebraically closed, there is always some section which is rigid in the first order at $\frakm_R$.

 \begin{Remark}\label{rigidCrystal}
{\rm  This notion of ``rigid in the first order" is comparable to a corresponding notion 
for Dieudonn\'e crystals that appears in \cite[Def. 3.31]{R-Z}, see Proposition \ref{Psiconstant} (a) below. In fact, the isomorphism
\[
\theta:= \Psi\cdot c^{-1}\cdot \Psi_0^{-1}: \P_0\otimes_{W(k)}W(R/\fraka_R)\xrightarrow{\sim} \P\otimes_{W(R)}W(R/\fraka_R)
\]
should correspond, in the case of Zink displays, to the trivialization given by the crystalline structure. (Note
that $(\frakm_R/\fraka_R)^2=0$.) Let us remark here that $s$ is rigid in the first order at $\frakm_R$
when we have $\theta(s_0)=s_{\fraka_R}$, a condition we might think of as saying ``$s$ is horizontal with respect to $\theta$ at the closed point of $R$".

}
\end{Remark}

  \begin{Definition}\label{Def145}
  A $(\Gg, \M)$-display $\calD$ over $R$ is \emph{locally universal}, if there is a section $s$ of $\calP$ which is rigid in the first order,
 such that the composition 
 \[
 q\cdot (s\otimes 1): \Spec(R)\to \calP\otimes_{W(R)}R\to \M
 \]
  gives an isomorphism between $R$ and the completion of the local 
  ring of $\M\otimes_{\Z_p}W(k)$ at the image of the closed point of $\Spec(R)$. 
  \end{Definition}

\begin{prop}\label{locally universalProp2}
Suppose that the $(\Gg, \calM)$-display $\calD=(\calP, q, \Psi)$ over $R$ is locally universal. Then $q: \calP\otimes_{W(R)}R\to \calM$ is formally smooth.
\end{prop}

\begin{proof} The action morphism $m: \Gg\times_{\Spec(\Z_p)}\M\to \M$ is smooth, since $\Gg$ is smooth.
Let $s: \Spec(R)\to \calP\otimes_{W(R)}R$ be a section which is rigid in the first order and is such that
$q\cdot (s\otimes 1)$ identifies $R$ with the completion  of the strict Henselization of $\calM$. Since $q$ is $\Gg$-equivariant, 
$\Gg\times _{\Spec(\Z_p)}\Spec(R)\simeq \calP\otimes_{W(R)}R$  given by $s$ identifies $q: \calP\otimes_{W(R)}R\to \calM$ with 
\[
 \Gg\times_{\Spec(\Z_p)}\Spec(R)\xrightarrow{{\rm id}_{\Gg}\times q\cdot (s\otimes 1) } \Gg\times_{\Spec(\Z_p)}\calM\xrightarrow{m} \M.
 \]
This is the composition of formally smooth morphisms, so also formally smooth.
\end{proof}
 \end{para}

    \begin{para}\label{paradeform} We return momentarily to Zink displays.
     We continue with the same assumptions on $\calD=(\P, q, \Psi)$ and $\iota$ as in \S \ref{compaZink2}. 
     In particular, $\calD$ is a Dieudonn\'e display.

  Recall that (e.g. \cite[2.2]{Zink}, \cite[Thm 3]{ZinkCFT}) if $(M, M_1, F_1)$ is a Dieudonn\'e display, then $M$ gives a crystal. In fact, we only need the following consequence: For $\fraka_R=\frakm_R^2+(\pi_E)$
  as above, there is a canonical isomorphism
  \begin{equation}\label{varthetamap}
 \vartheta:  M_0\otimes_{W(k)}\hat W(R/\fraka_R)\xrightarrow{\sim } M\otimes_{\hat W(R)}\hat W(R/\fraka_R).
  \end{equation}
  Using this (together with the main theorem of \cite{ZinkCFT}) one can understand the deformations of the $p$-divisible group $\sG_0$ given by $\calD_0(\iota)=\calD(\iota)\otimes_{\hat W(R)}W(k)$ (\cite[Thm 4]{ZinkCFT}): 
  
  Fix an identification $M_0= \Lambda\otimes_{\Z_p}W(k)$. The $p$-divisible group $\sG_{R/\fraka_R}$ which is given by $\calD(\iota)\otimes_{\hat W(R)}\hat W(R/\fraka_R)$ produces a $\Spec(R/\fraka_R)$-valued point of the Grassmannian ${\rm Gr}(d, \Lambda)_k$. This point is given by the submodule of $\Lambda\otimes_{\Z_p}R/\fraka_R$ which is the reduction of
  \[
 \Lambda\otimes_{\Z_p}\hat I_{R/\fraka_R}\subset \vartheta^{-1}(M_{1, R/\fraka_R})\subset \Lambda\otimes_{\Z_p}\hat W(R/\fraka_R).
  \]
  modulo $\hat I_{R/\fraka_R}$. 
  Conversely, every $\Spec(R/\fraka_R)$-valued point of the Grassmannian
 which lifts the $k$-point corresponding to $M_{1, 0}$ comes as above from a unique
 deformation $\sG_{R/\fraka_R}$ of $\sG_0$ over $R/\fraka_R$. This way we can identify
 the tangent space $\frakT_0{\rm Gr}(d, \Lambda)_k$ of ${\rm Gr}(d, \Lambda)_k$ at $M_{1, 0}$ with the tangent space $\frakT_0$
 of the formal deformation space  of the $p$-divisible group $\sG_0$.
 Here we need a more precise statement about the deformations that lift 
 over $R$ and the corresponding Dieudonn\'e $\Gg$-displays which we will give next.
 \end{para}
    
   \begin{para}
 We continue with the same assumptions on $\calD=(\P, q, \Psi)$ and $\iota$ as above.
 If $s$ is a section of $\calP$, then $s(\iota)$ is the corresponding frame, i.e. the isomorphism $s(\iota): \Lambda\otimes_{\Z_p}\hat W(R)\xrightarrow{\sim} M=\calP(\iota)$.

\begin{prop}\label{Psiconstant}
a) A section $s$ of $\P$ is rigid in the first order at $\frakm_R$ if and only if 
$\vartheta(s_0(\iota))=s_{\fraka_R}(\iota)$, where $\vartheta$ is the map (\ref{varthetamap}).

b) Suppose $s$ is a section of $\P$ which is rigid in the first order at $\frakm_R$. Let 
\[
\Spec(R/\fraka_R)\xrightarrow{\ } \frakT_0 
\]
be the classifying morphism of $\Spec(R/\fraka_R)$ into the tangent space $\frakT_0$
of the deformation space of $\sG_0$, which is given by the deformation $\sG_{R/\fraka_R}$. Then there is an isomorphism
$ \frakT_0{\rm Gr}(d, n)_k\xrightarrow{\sim} \frakT_0$ over $k$ such that
 \[
  \Spec(R/\fraka_R)\xrightarrow{s_{\frak a_R}\otimes 1} \calP\otimes_{\hat W(R)}R/\fraka_R\xrightarrow{q\otimes_R 1} \M\to {\rm Gr}(d, n)_{\O_E}
 \]
gives, after composing with $ \frakT_0{\rm Gr}(d, n)_k\xrightarrow{\sim} \frakT_0$, the classifying morphism above.
\end{prop}

\begin{proof} It follows directly from the definitions that if $s$ is rigid in the first order at $\frakm_R$, then the trivialization $s(\iota): \Lambda\otimes_{\Z_p}\hat W(R)\xrightarrow{\sim} M$ makes $\Psi_{R}(\iota): \tilde M_1\xrightarrow{\sim} M$ ``constant modulo $\fraka_R$" in the sense of \cite[Definition (3.1.11)]{KP}. (By this we mean that 
we take the identification $\Lambda\otimes_{\Z_p}\hat W(R)\cong M$ which is used in \cite[Definition (3.1.11)]{KP} 
 to be $s(\iota)$.) Both (a) and (b) now follow from the definition of $c$, the construction of the map $\vartheta$ in \cite[2.2]{Zink}, \cite{ZinkCFT}, and the argument in the proof  of \cite[Lemma 3.1.12]{KP}. (This lemma gives part (b) for a universal $(M, M_1, F_1)$.)
\end{proof}

\end{para}
 
 \ve

\section{Crystalline $\Gg$-representations}

In this section, we describe ``$\Gg$-versions'' of objects of integral $p$-adic Hodge theory 
 which can be attached to a $\Gg(\Z_p)$-valued crystalline representation.

  \subsection{}\label{BKrep}
  
  Fix $(\Gg, \M)$ as in \S\ref{ssShimura} of integral local Hodge type. Fix also an integral local Hodge embedding 
 \[
 \iota:\Gg\hookrightarrow \GL(\Lambda) 
  \]
with $\M\to \overline{\iota_*(X_\mu)}\subset {\rm Gr}(g,\Lambda)_{\O_E}$.

Let $F$ be a finite extension of $E$ or of $\breve E$ with residue field $k$. Let 
  \[
  \rho: {\rm Gal}(\bar F/F)\to \Gg(\Z_p) 
  \]
   be a Galois representation. We assume that $\iota\cdot \rho: {\rm Gal}(\bar F/F)\to   \GL(\Lambda[1/p])$ is crystalline. We give three flavors of ``$\Gg$-versions'' of Frobenius modules 
which can be attached to $\rho$ by integral $p$-adic Hodge theory.

\subsection{The Breuil-Kisin $\Gg$-module}

\begin{para}\label{BKtorsor}

Choose a uniformizer $\pi_F$ of $F$ and let $E(u)\in W(k)[u]$ be the Eisenstein polynomial 
 with $E(\pi_F)=0$. Choose also a compatible system of roots $\sqrt[p^n]{\pi_F}$ in $\bar F$.
 The Breuil-Kisin $\Gg$-module attached to $\rho$, is 
by definition, a pair $(\calP_{\rm BK}, \phi_{\calP_{\rm BK}})$ where
\begin{itemize}
\item[$\bullet$] $\calP_{\rm BK}$ is a $\Gg$-torsor over $\frakS=W(k)\lps u\rps$,

\item[$\bullet$] $\phi_{\calP_{\rm BK}}$ is an isomorphism of $\Gg$-torsors
\[
\phi_{\calP_{\rm BK}}:  \phi^*\calP_{\rm BK}[1/E(u)]\xrightarrow{\sim } \calP_{\rm BK}[1/E(u)].
\]
\end{itemize}
(Here, $\phi: \frakS\to \frakS$ is the ring homomorphism which extends the Frobenius 
on $W(k)$ and satisfies $\phi(u)=u^p$.)

It is constructed as follows. (It does depend on the choice of   $\sqrt[p^n]{\pi_F}$, $n\geq 0$.) 

 As in the proof of \cite[Lemma 3.3.5]{KP}, we  write $\O_{\calG}=\varinjlim_{i\in J} \Lambda_i$ with $\Lambda_i\subset \O_{\calG}$ of finite $\Z_p$-rank and $\calG$-stable. The Galois action on $\Lambda$ gives
 actions on $\Lambda_i$ and on $\O_{\Gg}$. We  apply the Breuil-Kisin functor 
 \[
 \frakM: {\rm Rep}_{K}^{{\rm cris}, \circ}\to {\rm Mod}^\phi_{/\frakS}
 \]
 (see   \cite[\S 1]{KisinJAMS}, \cite[Theorem 3.3.2]{KP} for notations and details of its properties.
 This depends on the choice of   $\sqrt[p^n]{\pi_F}$, $n\geq 0$, in $\bar F$). 
   Let 
  \[
  \frakM(\O_{\calG}):=\varinjlim\nolimits_{i\in J} \frakM(\Lambda_i).
  \]
  By \cite[Theorem 3.3.2]{KP}, the composition of $\frakM$ with restriction to $D^\times$  is an exact faithful tensor functor.
 Hence, we obtain that $\frakM(\O_{\calG})_{|D^\times}$ is a sheaf of algebras over $D^\times$ and that 
 \[
 \calP^\times_{\rm BK}:=\underline{\Spec}(\frakM(\O_{\calG})_{|D^\times})
 \]
  is a $\calG$-torsor over $D^\times$. Using purity, we can extend  $\calP^\times_{\rm BK}$ to a $\Gg$-torsor $\calP_{\rm BK}$ 
  over $D=\Spec(W(k)\lps u\rps)$ as follows: 
  
  Let us consider the scheme 
 \[
 \calP':= \underline {\rm Isom}_{\sbk, s_a\otimes 1}(\frakM(\Lambda), \Lambda\otimes_{\Z_p}W\lps u\rps)\subset  \underline {\rm Hom}(\frakM(\Lambda), \Lambda\otimes_{\Z_p}W\lps u\rps)
 \]
  of isomorphisms taking 
 $\sbk$ to  $s_a\otimes 1$. Here, as in loc. cit., 
 \[
 \sbk:=\tilde s_a\in \frakM(\Lambda)^\otimes
 \] 
 are the tensors obtained by applying the functor $\frakM(-)$ to the Galois invariant tensors $s_{a}\in \Lambda^\otimes$. 
 By \cite[Lemma 3.3.5]{KP}, the scheme $\calP'$ is naturally a $\calG$-torsor over $D$, which, in fact, is trivial.
 As in the proof of \cite[Lemma 3.3.5]{KP}, we  see that there is a natural isomorphism
 $\calP'_{|D^\times}\simeq \calP^\times_{\rm BK}$ as $\calG$-torsors over $D^\times$.
 Hence,   the 
$\calG$-torsor $\calP'$ over $D$ gives the desired extension;  we denote it by $\calP_{\rm BK} $.
We can see that the $\calG$-torsor $\calP_{\rm BK} $ over $D$ is uniquely determined (up to unique isomorphism) and is independent of the choice of $\Lambda$.  This follows from the fact that there is a bijection between sections of $\calP'$ over $W\lps u\rps$ and sections of $\calP'_{|D^\times }=\calP^\times_{\rm BK}$ over $D^\times$. 
The isomorphism $\phi_{\calP_{\rm BK}}$ comes directly from the construction $\calP_{\rm BK} =\calP'$
and is also independent  of choices.

Note here that we can view the Breuil-Kisin $\Gg$-module attached to $\rho$ as an exact  tensor functor
\[
{\rm Rep}_{\Z_p}(\Gg)\to {\rm Mod}^\phi_{/\frakS}.
\]
\end{para}

  \subsection{The Dieudonn\'e $\Gg$-display.}
  
  \begin{para}\label{BKdisplay}
   
 Assume here that $\iota\cdot \rho$ has Hodge-Tate weights in $\{0, 1\}$ and that in fact,
 the deRham filtration on $D_{\rm dR}(\Lambda[1/p])$ is given by a $G$-cocharacter conjugate to $\mu$. 
 Then, there is also a
  Dieudonn\'e $(\Gg, \M)$-display 
  \[
  \calD_{\rho}=(\calP, q, \Psi)
  \]
   over $\O_F$ which is attached to $\rho$. This is constructed as follows:
   
    Consider the Breuil-Kisin module $\frakM=\frakM(\Lambda)$  attached to $\Lambda$. It comes with the Frobenius $\phi_\frakM: \phi^*\frakM[1/E(u)]\xrightarrow{\sim} \frakM[1/E(u)]$. The condition on 
 the weights implies that
 \[
 \frakM\subset \phi_\frakM(\phi^*\frakM)\subset E(u)^{-1}\frakM.
 \]
  
  Let  $\frakS=W(k)\lps u\rps\to \hat W(\O_F)$ be the unique Frobenius equivariant map lifting the identity on $\O_F$ which is given by $u\mapsto [\pi_F]$.  We set
 \[
 \calP:=\calP_{\rm BK}\otimes_{\frakS, \phi}\hat W(\O_F)=\underline{\rm Isom}_{(\sad), (s_a\otimes 1)}(\phi^*\frakM(\Lambda), \Lambda\otimes_{\Z_p}\hat W(\O_F)).
 \] 
 Here, we set
 \[
\sad:= \phi^*(\sbk)\in \phi^*\frakM(\Lambda).
 \]
  To obtain the rest of the data of the Dieudonn\'e $\Gg$-display we proceed as follows:

 We can  write $\frakM=L\oplus T$, with $L$ and $T$ free $\frakS$-modules such that
 \[
 \phi_\frakM(\phi^*\frakM)=L\oplus {E(u)}^{-1}T.
 \]
 Denote by $\frakM_1\subset \phi^*\frakM$  the largest $\frakS$-submodule such that $\phi_\frakM(\frakM_1)  \subset \frakM$.
 We have
 \[
 E(u)\phi^*\frakM\subset \frakM_1\subset \phi^*\frakM.
 \]
  Then $\phi_\frakM(\frakM_1) = \frakM$ and we have an isomorphism
 \[
 \phi_\frakM: \frakM_1\xrightarrow{\sim} \frakM.
 \]
 The corresponding filtration
 \[
 \overline \frakM_1\subset (\phi^*\frakM)/E(u)(\phi^*\frakM)
 \]
 gives  an $\O_F$-valued point of a Grassmannian. Over $F$, this filtration is 
 the deRham filtration of $D_{\rm dR}(\Lambda[1/p])$ by \cite[Theorem 3.3.2 (1)]{KP}.
The condition that the deRham 
 filtration on $D_{\rm dR}(\Lambda[1/p])$ is given by a $G$-cocharacter conjugate to $\mu$
 now implies that this point is in the closure of the 
 $G$-orbit of $\mu$, hence gives an $\O_F$-point of $\M$.  This produces a $\Gg$-equivariant 
 morphism
 \[
 q: \phi^*\calP_{\rm BK}\otimes_\frakS \O_F\to \M.
 \]
 Since $\calP\otimes_{\hat W(\O_F)}\O_F=\phi^*\calP_{\rm BK}\otimes_{\frakS}\O_F$ we obtain
 \[
 q: \calP\otimes_{\hat W(\O_F)} \O_F\to \M.
 \]
 This gives $\calQ$ and $\Psi$ is then determined by $\phi_{\calP_{\rm BK}}$.
 To give these more explicitly, set $M=\frakM\otimes_{\frakS, \phi}\hat W(\O_F)=\phi^*\frakM\otimes_{\frakS}\hat W(\O_F)$
 which acquires the tensors $\sad=\phi^*(\sbk)\in M^{\otimes}$. 
We have
 \[
 \frakM_1\otimes_{\frakS}\hat W(\O_F)\subset M=\phi^*\frakM\otimes_{\frakS}\hat W(\O_F).
 \]
 Using that $\phi(E([\pi_F]))/p$ is a unit in $\hat W(\O_F)$, after applying $\phi$, we obtain a filtration 
 \[
 p(\phi^*M)\subset \tilde M_1:=\phi^*(\frakM_1\otimes_{\frakS}\hat W(\O_F))\subset \phi^*M.
 \]
As in the proof of \cite[Lemma 3.2.9]{KP}, the tensors $\phi^*(\sad)\in \phi^*M^{\otimes}$ lie in $\tilde M_1^\otimes$ and 
\[
\calQ=\underline{\rm Isom}_{(\phi^*\sad), (s_a\otimes 1)}(\tilde M_1, \Lambda\otimes_{\Z_p}\hat W(\O_F)).
\]
The ``divided Frobenius" $\tilde M_1\xrightarrow{\sim} M$ which is obtained by pulling back 
$\phi_\frakM: \frakM_1\xrightarrow{\sim} \frakM$ along $\phi: \frakS\to \hat W(\O_F)$ sends the tensors 
$\phi^*(\sad)$ to $\sad$ and gives the $\Gg$-isomorphism $\Psi:\calQ\xrightarrow{\sim}\calP$.

  \end{para}
  
  \subsection{The Breuil-Kisin-Fargues $\Gg$-module.}

    Here, we use the notations of \S\ref{appCompl}, \S\ref{appAinf}. In particular, $\O$ is the $p$-adic completion of the integral closure $\bar\O_F$ of $\O_F$ in $\bar F$ and $\O^\flat$ is its tilt. For simplicity, set $A_\inf=A_\inf(\O)$.

    \begin{para}\label{BKF}
     By definition, a (finite free) Breuil-Kisin-Fargues (BKF) module over $A_\inf$ is a finite free $A_\inf$-module
    $M$ together with an isomorphism
    \[
    \phi_M: (\phi^*M)[1/\phi(\xi)]\xrightarrow{\sim} M[1/\phi(\xi)]
    \]
    where $\xi$ is a generator of the kernel of $\theta$. (See \cite{Schber}, \cite{BMS}).
    
    Similarly, a Breuil-Kisin-Fargues $\Gg$-module over $A_\inf$ is, by definition, a pair $(\calD_\inf, \phi_{\calD_{\inf}})$, where $\calD_\inf$ is a $\Gg$-torsor over $A_\inf$ and
\[
\phi_{\calD_\inf}: (\phi^*\calD_{\inf})[1/\phi(\xi)]\xrightarrow{\sim} \calD_{\inf}[1/\phi(\xi)]
\]
 is a $\Gg$-equivariant isomorphism.
    \end{para}
       
 \begin{para}
 
    Now fix a uniformizer $\pi=\pi_F$ of $F$ and also a compatible system of roots $\pi^{1/p^n}$, for $n\geq 1$, giving an element $\pi^\flat=(\pi, \pi^{1/p},\ldots ) \in \O^\flat$. 
 These choices define a $\phi$-equivariant homomorphism
\[
f : \frakS=W\lps u\rps\to A_\inf
\]
given by $u\mapsto [\pi^\flat]^p$ and which is the Frobenius on $W=W(k)$. By \cite[Proposition 4.32]{BMS}, the association 
\[
\frakM\mapsto M=\frakM\otimes_{\frakS}A_\inf
\] 
defines an exact tensor functor from Breuil-Kisin modules over $\frakS$ to Breuil-Kisin-Fargues (BKF) modules over $A_\inf$.

We can compose the above functor with the tensor exact functor
\[
{\rm Rep}_{\Z_p}(\Gg)\to {\rm Mod}^\phi_{/\frakS}
\]
given by the Breuil-Kisin $\Gg$-module $\calP_{\rm BK}$  over $D=\Spec(\frakS)$ of \S\ref{BKtorsor}. We obtain a tensor exact functor
\[
{\rm Rep}_{\Z_p}(\Gg)\to {\rm Mod}^\phi_{/A_\inf}
\]
to the category ${\rm Mod}^\phi_{/A_\inf}$ of finite free BKF modules over $A_\inf$.
This functor gives a $\Gg$-torsor $\calD_{\inf}$ over $A_{\inf}$ which admits a $\Gg$-equivariant isomorphism
\[
\phi_{\calD_\inf}: (\phi^*\calD_{\inf})[1/\phi(\xi)]\xrightarrow{\sim} \calD_{\inf}[1/\phi(\xi)].
\]
(Here, $\phi(\xi)=f(E(u))$ for $E(u)\in W\lps u\rps$ an Eisenstein polynomial for $\pi$.
The element $\xi$ generates the kernel of $\theta: A_\inf\to \O$.)
Hence, $(\calD_{\inf}, \phi_{\calD_\inf})$ is a  Breuil-Kisin-Fargues $\Gg$-module
which is attached to $\rho$.

More explicitly, set
\[
M_\inf:=M_\inf(\Lambda)=\frakM(\Lambda)\otimes_{\frakS, f} A_\inf.
\]
The  tensors $s_{a}\in \Lambda^\otimes$ induce $\phi$-invariant tensors $s_{a, \inf}\in M^\otimes_\inf$. These are the base changes 
\[
s_{a, \inf}=f^*(\sbk)
\]
of $\sbk\in \frakM(\Lambda)^\otimes$. We have
\[
\calD_{\inf}\cong \underline{\rm Isom}_{(s_{a, \inf}), (s_a\otimes 1)}(M_\inf, \Lambda\otimes_{\Z_p}A_{\inf}(\O))
\] 
as $\Gg$-torsors.

 \end{para}

  \begin{para}
 
Assume  that $\Lambda$, acted on by $\rho$, is isomorphic to the Galois representation on the Tate module $T=T_p(\mathscr G)$ of a $p$-divisible group $\mathscr G$ over $\O_F$.
We have 
\begin{equation}\label{dualinf}
M_\inf\cong  M(\mathscr G).
\end{equation}
Here, $M(\mathscr G)$ is the BKF module associated (\cite[Theorem 17.5.2]{Schber}) to the base change over $\O$ of $\mathscr G$ of the $p$-divisible group $\mathscr G$. 
The corresponding $\phi$-linear Frobenius $\phi_{M_\inf}$ satisfies
\[
 M_\inf\subset \phi_{M_\inf}(M_\inf)\subset  \phi(\xi)^{-1} M_\inf.
\]
(Here, again, we denote a $\phi$-linear map and its linearization by the same symbol.)
\end{para}
       
 \begin{para}
 Choose $p$-th power roots of unity giving $\epsilon=(1, \zeta_p, \zeta_{p^2}, \ldots )\in \O^\flat$ and set $\mu=[\epsilon]-1\in A_\inf$. 
 
 Let  $\underline{\Q_p/\Z_p}$ be the constant $p$-divisible group. By \cite[theorem 17.5.2]{Schber}, there is a comparison map
\[
\Lambda\cong {\rm Hom}_\O(\underline {\Q_p/\Z_p}, \mathscr G_\O )
\xrightarrow{\sim} {\rm Hom}_{A_\inf, \phi}(A_\inf, M_\inf)=M_\inf^{\phi_{M_\inf}=1}.
\]
This induces the $\phi$-invariant
isomorphism
\[
\Lambda\otimes_{\Z_p} A_\inf[1/\mu]\cong M_\inf[1/\mu]\cong\frakM(\Lambda)\otimes_{\frakS,f} A_\inf[1/\mu].
\]
It follows from the constructions and \cite[4.26]{BMS} that under these isomorphisms the tensors 
$s_{a}\otimes 1$, $s_{a, \inf}$ 
and $\sbk\otimes 1$ correspond. 
\end{para}
       
 \begin{para}\label{BKFcompatible}
 
 The constructions of the previous paragraphs are compatible in the following sense. 
 Assume that $\rho$ is as in the beginning of \S\ref{BKdisplay}; then $\Lambda$ is the Tate module of a $p$-divisible group over $\O_F$.
  Fix   $\pi=\pi_F$ of $F$ and a compatible system of roots $\pi^{1/p^n}$, for $n\geq 1$, giving   $\pi^\flat=(\pi, \pi^{1/p},\ldots ) \in \O^\flat$ as above. Recall  the homomorphism 
 \[
 \theta_\infty: A_{\inf}(\O)=W(\O^\flat)\to W(\O), \quad \theta_\infty([(x^{(0)}, x^{(1)}, \ldots )])=[x^{(0)}].
 \]
 The diagram
 $$
 \begin{matrix}
  \frakS &  \xrightarrow{\ \ \ f\ \ \ } & A_{\inf}(\O)\\
\phi\downarrow\ \ \ \  &\ \ & \downarrow\theta_\infty\\
 \hat W(\O_F)&\xrightarrow{\ \ \ \ \ \ \ \ } & W(\O), \\
 \end{matrix}
 $$
 where the bottom horizontal map is given by the inclusion, commutes.
 We then have isomorphisms of $\Gg$-torsors
 \begin{equation}\label{compareEq}
 \calP\otimes_{\hat W(\O_F)}W(\O)\simeq \calP_{\rm BK}\otimes_{\frakS, \phi} W(\O)\simeq \calD_\inf\otimes_{A_\inf(\O)}W(\O)
 \end{equation}
which are compatible with the Frobenius structures: This can be seen by combining 
results of \cite[\S 4]{BMS}, \cite[\S 17]{Schber}, and the above constructions.
Similarly, we can see that both the $(\Gg, \calM)$-display $\calD$ and the Breuil-Kisin-Fargues $\Gg$-module $(\calD_{\inf}, \phi_{\calD_\inf})$ are, up to a canonical isomorphism, independent of the choice of $\pi_F$ and its roots $\sqrt[p^n]{\pi_F} $ in $\bar F\subset C$. 
 \end{para}
 
 \begin{Remark}
 {\rm The various compatibilities after (often confusing) Frobenius twists between these different objects, all attached to the same integral crystalline representation, can be explained via the theory of prisms and prismatic cohomology of Bhatt and Scholze \cite{BSPrism}. Indeed, the BK and BKF $\Gg$-modules should be ``facets" of a single object, a prismatic Frobenius crystal with $\Gg$-structure over ${\rm Spf}(\O_F)$.  On the other hand, the $\Gg$-display fits somewhat less directly into this and seems to be tied more closely to $p$-divisible groups by using the Hodge embedding.}
 \end{Remark}

  \ve
  
  \section{Associated systems}\label{s2}
  
  Here, we define the notion of an associated system and give several results. The main result says, roughly, that a pro-\'etale $\Gg(\Z_p)$-Galois cover which is given by the Tate module of a $p$-divisible group over a normal base with appropriate \'etale tensors, can be extended uniquely to an associated system (see Theorem \ref{Exists} for the precise statement).
 We also show how to use the existence of \emph{locally universal} associated systems to compare formal completions of normal schemes with the same generic fiber (Proposition \ref{MainProp}). Finally, we show that the definition of associated is independent of the choice of the local Hodge embedding (Proposition \ref{indassociated}).
 
  In all of \S \ref{s2} we assume, without further mention, that $(\Gg, \calM)$ is of {\sl strongly} integral local Hodge type. All the Hodge embeddings $\iota: \Gg\hookrightarrow \GL(\Lambda)$ we consider are strongly integral: they induce a closed immersion $\iota_*:\calM\hookrightarrow {\rm Gr}(d,\Lambda)_{\O_E}$.
    
  \subsection{Local systems and associated systems}
  
    Let us suppose that  $\calX$ is a flat $\O_E$-scheme of finite type, which is normal
 and has smooth generic fiber. Suppose that we are given  a Galois cover 
  of $X=\calX[1/p]$ with group $\Gg(\Z_p)=\varprojlim\nolimits_n \Gg(\Z/p^n\Z)$; this gives 
  a pro-\'etale $\Gg(\Z_p)$-cover $\Ls$ on $X$.

 \begin{para}\label{311} For any $\bar x\in \calX(k)$, let $\hat R_{\bar x}$ be the completion of the strict Henselization of the local ring $R_{\bar x}$  of $  \calX $ at
$\bar x$. 

Suppose we have a Dieudonn\'e  $(\Gg, \M)$-display $\calD_{\bar x}=(\calP_{\bar x}, q_{\bar x}, \Psi_{\bar x})$ over $\hat R_{\bar x}$. Choose a (strongly integral) local Hodge embedding $\iota$.

 In accordance with our notations in \S\ref{gen12}, \S \ref{compaZink}, we will denote by 
\[
\calD_{\bar x}(\iota)=(M_{\bar x}, M_{1, \bar x}, F_{1, \bar x})
\]
the Dieudonn\'e display over $\hat R_{\bar x}$ induced from $\calD_{\bar x}$ using $\iota$
and the construction of \cite[3.1.5]{KP}. As usual, Proposition \ref{torsorRep} gives tensors $m_{a, \bar x}\in M_{\bar x}^{\otimes}$ corresponding to $s_a\in \Lambda^{\otimes }$.

By \cite{ZinkCFT}, there is a corresponding $p$-divisible group $\mathscr G(\bar x)$    over $\hat R_{\bar x}$
\[
\mathscr G(\bar x)={\rm BT}( \calD_{\bar x}(\iota)),
\]
of height   $n={\rm rank}_{\Z_p}(\Lambda)$.
Recall (\S \ref{compaZink}), we have a canonical isomorphism
 \begin{equation}\label{Dieudonne}
 \calD_{\bar x}(\iota)\cong{\DD}(\mathscr G(\bar x))(\hat W(\hat R_{\bar x}))
\end{equation}
of  Dieudonn\'e displays, where on the right hand side,  ${\DD}$ denotes the evaluation of the \emph{covariant} Dieudonn\'e crystal.

 \smallskip

 Consider the following two conditions. The first is:

\medskip
   
  A1)  There is an isomorphism $\alpha=\alpha_{\bar x}$ of $\Z_p$-local systems over $\hat R_{\bar x}[1/p]$ between the local system given by the Tate module $T$ of the $p$-divisible group $\mathscr G({\bar x})={\rm BT}( \calD_{\bar x}(\iota))$ and the pull-back of $\Lr(\iota)$.  
  \medskip
  
 Before we state the second condition, we observe the following. Assuming (A1), for any $\ti x\in \calX(\O_F)$ that lifts $\bar x$, the Galois representation $\rho({x})$ obtained from $x^*\Ls$, is crystalline. By \cite[Theorem 3.3.2 (2)]{KP}, the isomorphism $\alpha$ in (A1) induces an isomorphism
 \begin{equation}\label{Dieudonne2}
\calD_{\rho(x)}(\iota)\cong\DD(\ti x^*\mathscr G(\bar x))(\hat W(\O_F)).
\end{equation}
Here, $\ti x^*\mathscr G(\bar x)$ is the $p$-divisible group over $\O_F$ obtained by base-changing $\mathscr G(\bar x)$ by $\ti x$ and $\calD_{\rho(x)}$ is the Dieudonn\'e $(\Gg, \calM)$-display attached to $\rho(x)$ by \S\ref{BKdisplay}.
Combining (\ref{Dieudonne}), (\ref{Dieudonne2}), and base change gives an isomorphism
 \begin{equation}\label{Dieudonne2a}
  \calD_{\rho(x)}(\iota)   \xrightarrow{\sim}     \calD_{\bar x}(\iota) \otimes_{\hat W(\hat R_{\bar x})}\hat W(\O_F)
\end{equation}
of Dieudonn\'e displays over $\O_F$.

\smallskip

We can now state the second condition (it only makes sense after we assume (A1)):
\smallskip

A2) For every $\ti x\in \calX(\O_F)$ lifting $\bar x$, there is an isomorphism
of Dieudonn\'e $(\Gg,\M)$-displays above
 $
 \calD_{\rho(x)}   \xrightarrow{\sim}  \calD_{\bar x} \otimes_{\hat W(\hat R_{\bar x})}\hat W(\O_F)
$
 over $\O_F$ which, after applying $\iota$, induces the isomorphism (\ref{Dieudonne2a})
\[
  \calD_{\rho(x)}(\iota)   \xrightarrow{\sim}     \calD_{\bar x}(\iota) \otimes_{\hat W(\hat R_{\bar x})}\hat W(\O_F).
\]
\smallskip

More concretely,  we see that condition (A2) is equivalent to the following:
\smallskip

A2')  For every $\ti x\in \calX(\O_F)$ lifting $\bar x$, and every $a$, the isomorphism (\ref{Dieudonne2a}) maps the tensor $\sad=s_{a, \scriptscriptstyle{\rm D}, \ti x}\in  M^{\otimes}$ attached to $\rho(x)$ in \S \ref{BKdisplay}
to the base change $m_{a, \bar x}\otimes 1\in M_{\bar x}^{\otimes }\otimes_{\hat W(\hat R_{\bar x})}\hat W(\O_F)$ of the tensor $m_{a,\bar x}\in M_{\bar x}^{\otimes }$.

 \begin{Definition}\label{defass}
 If (A1) and (A2)  hold for $\bar x\in \calX(k)$, we say that $\Ls$ and $\calD_{\bar x}$ are \emph{associated}.
 If (A1) and (A2) hold for all $\bar x\in \calX(k)$, with $\alpha_{\bar x}$ the isomorphism in (A1),
 we call $(\Ls, \{\alpha_{\bar x}, \calD_{\bar x}\}_{\bar x\in \calX(k)})$ an \emph{associated system}.
 \end{Definition}

\begin{Definition}\label{locally universal}
The associated system $(\Ls, \{\alpha_{\bar x}, \calD_{\bar x}\}_{\bar x\in \X(k)})$ is \emph{locally universal} over $\X$,
if for every $\bar x\in \calX(k)$, $\calD_{\bar x}$ is locally universal in the sense of Definition \ref{Def145}.
 \end{Definition}

 The definition of ``associated'' uses the local Hodge embedding $\iota$ which we, for now,  fix in our discussion. We will later show that it is independent of this choice, see  Proposition  \ref{indassociated}. Most of the time, we will omit the notation
 of the isomorphisms $\alpha_{\bar x}$ and write $(\Ls, \{\calD_{\bar x}\}_{\bar x\in \X(k)})$ for the associated system.

 \begin{prop}\label{depL}
 If $\Ls$ and $\calD_{\bar x}$, for $\bar x\in \calX(k)$, are associated,   
 then $\calD_{\bar x}$ is, up to isomorphism, 
 uniquely determined by $\Ls$. 
 \end{prop}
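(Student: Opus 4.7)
The proof proceeds in two stages. By Proposition~\ref{torsorRep}, the Dieudonn\'e $(\Gg,\calM)$-display $\calD_{\bar x}=(\calP_{\bar x}, q_{\bar x}, \Psi_{\bar x})$ is equivalent, via $\iota$, to the data of the underlying Dieudonn\'e display $\calD_{\bar x}(\iota)=(M_{\bar x}, M_{1,\bar x}, F_{1,\bar x})$ together with the tensors $\ti s_a \in M_{\bar x}^\otimes$ that cut out $\calP_{\bar x}$ as a sub-$\Gg$-torsor of $\underline{\rm Isom}(M_{\bar x}, \Lambda\otimes\hat W(\hat R_{\bar x}))$; the filtration and divided Frobenius encode $q_{\bar x}$ and $\Psi_{\bar x}$ respectively. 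So it is enough to recover both pieces from $\Lr$.

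For the first stage, I would use (A1) and (\ref{Dieudonne}) to identify $\calD_{\bar x}(\iota)$ with $\DD(\mathscr G(\bar x))(\hat W(\hat R_{\bar x}))$, and then show that the $p$-divisible group $\mathscr G(\bar x)$ itself is determined by $\Lr$. On the generic fiber $\hat R_{\bar x}[1/p]$, condition (A1) gives an isomorphism $T_p(\mathscr G(\bar x))\cong \Lr(\iota)^\vee$, so Tate's theorem pins down $\mathscr G(\bar x)\otimes_{\hat R_{\bar x}}\hat R_{\bar x}[1/p]$ up to unique isomorphism. Since $\hat R_{\bar x}$ is a complete Noetherian normal local domain flat over $\Z_p$, the restriction functor on $p$-divisible groups from $\hat R_{\bar x}$ to $\hat R_{\bar x}[1/p]$ is fully faithful (by Tate extended to higher-dimensional normal bases, as in de Jong/Vasiu), so $\mathscr G(\bar x)$ and hence $\calD_{\bar x}(\iota)$ are determined up to canonical isomorphism.

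For the second stage, suppose $\calD_{\bar x}'=(\calP_{\bar x}', q_{\bar x}', \Psi_{\bar x}')$ is another Dieudonn\'e $(\Gg,\calM)$-display associated to $\Lr$, and fix the isomorphism $\calD_{\bar x}(\iota)\simeq \calD_{\bar x}'(\iota)$ of underlying Dieudonn\'e displays produced by Stage~1. I would match the tensors $\ti s_a$ and $\ti s_a'$ in $M_{\bar x}^\otimes$ by evaluating at $\O_F$-points. For each $W$-algebra homomorphism $\ti x:\hat R_{\bar x}\to \O_F$ lifting $\bar x$, condition (A2) applied to both displays identifies $\ti x^*(\ti s_a)$ and $\ti x^*(\ti s_a')$ with the same tensor attached to $\calD_{\rho(x)}$ via the Breuil-Kisin construction of \S\ref{BKdisplay} (which depends only on $\rho(x)$, hence only on $\Lr$). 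So $\ti x^*(\ti s_a)=\ti x^*(\ti s_a')$ for every such $\ti x$. By Proposition~\ref{dejong}, $\hat R_{\bar x}\hookrightarrow \prod_{\ti x}\O_F$ is injective, and applying the Witt functor together with Corollary~\ref{dejonglemma} and Proposition~\ref{corInter2} gives $\hat W(\hat R_{\bar x})\hookrightarrow \prod_{\ti x}\hat W(\O_F)$. This forces $\ti s_a=\ti s_a'$, so the Stage~1 isomorphism automatically respects the $\Gg$-structures and is an isomorphism $\calD_{\bar x}\simeq \calD_{\bar x}'$.

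The main obstacle I expect is the Stage~1 full-faithfulness input: $\hat R_{\bar x}$ is a complete normal local ring of mixed characteristic but generally of dimension larger than one, so classical Tate over DVRs is not enough and one must invoke its extension to normal bases. An alternative that sidesteps $p$-divisible groups entirely would be to reconstruct $\calD_{\bar x}$ directly from the crystalline $\Gg$-representations $\rho(x)$ at each $\O_F$-point via the Breuil-Kisin-Fargues $\Gg$-module of \S\ref{BKF} together with the compatibility (\ref{compareEq}), and then descend from $\prod_{\ti x}\hat W(\O_F)$ to $\hat W(\hat R_{\bar x})$ using the same density input supplied by \S\ref{Algcond}.
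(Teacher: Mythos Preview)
Your proposal is correct and follows essentially the same route as the paper's proof: first use (A1) together with Tate's full-faithfulness over the normal Noetherian domain $\hat R_{\bar x}$ to match the underlying Dieudonn\'e displays $\calD_{\bar x}(\iota)\simeq\calD'_{\bar x}(\iota)$, then use (A2) at all $\O_F$-points together with the injectivity of $\hat W(\hat R_{\bar x})\hookrightarrow\prod_{\ti x}\hat W(\O_F)$ to match the tensors. The paper phrases Stage~2 as showing that the isomorphism $\delta:M\to M'$ lands in the closed subscheme $\underline{\rm Hom}_{(s_a),(s'_a)}(M,M')\subset\underline{\rm Hom}(M,M')$, which is exactly your tensor-matching argument reframed.
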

 
 \begin{proof} Suppose that $\Ls$ and $\calD'_{\bar x}$ are also associated. Then $\mathscr G({\bar x})[1/p]\simeq \mathscr G'({\bar x})[1/p]$ as $p$-divisible groups over $\hat R_{\bar x}[1/p]$, since they both have the same Tate module which is given by  the restriction of $\Ls(\iota)$ to $\hat R_{\bar x}[1/p]$. Tate's theorem applied to the normal Noetherian domain $\hat R_{\bar x}$, extends this to a unique isomorphism
 $\beta: \mathscr G({\bar x})\xrightarrow{\sim} \mathscr G'({\bar x})$.  Therefore, using \cite{ZinkCFT}, we obtain an isomorphism 
 of Dieudonn\'e displays $\delta: \calD_{\bar x}(\iota)\xrightarrow{\sim} \calD'_{\bar x}(\iota)$. 
 This amounts to an isomorphism
 \[
 (M, M_1, F_1)\xrightarrow{\delta} (M', M'_1, F'_1).
 \]
 Here both $M$, $M'$ are free $\hat W(\hat R_{\bar x})$-modules of rank $n$. The   
 $(\Gg, \M)$-displays $\calD_{\bar x}$ and $\calD'_{\bar x}$ have corresponding $\Gg$-torsors $\calP$, $\calP'$. By the construction of Proposition \ref{torsorRep}, these $\Gg$-torsors are given by $M$, $M'$ and tensors
  $m_{a}\in M^{\otimes}$, $m'_{a}\in M'^{\otimes}$, respectively. We would like to show that $\delta: M\to M'$ 
  satisfies $\delta^{\otimes}(m_a)=m'_a$, i.e. $\delta$ lies in 
 the $\hat W(\hat R_{\bar x})$-valued points of the closed subscheme
 \[
\underline{ {\rm Hom}}_{(m_a), (m'_a)}(M, M')=\Spec(B/I)\hookrightarrow \underline{ {\rm Hom} }(M, M')= \Spec(B).
 \]
 Here, $B\simeq  \hat W(\hat R_{\bar x})[(t_{ij})_{1\leq i, j\leq n}]$, non-canonically. Let us consider $f(t_{ij})\in I$. We would like to show that 
  $f(\delta_{ij})=0$ in $\hat W(\hat R_{\bar x})$, where $\delta_{ij}\in \hat W(\hat R_{\bar x})$ are the coordinates of the 
  $\hat W(\hat R_{\bar x})$-linear map $\delta$. 
  Condition (A2) implies that $\ti x^*\delta$ respects the tensors $\ti x^*m_a$, 
  $\ti x^*m'_a$, so  $(\ti x)^*(f(\delta_{ij}))=0$,
  for all $\ti x$ lifting $\bar x$. This implies that $f(\delta_{ij})=0$, so $\delta$ respects the tensors. It now follows that 
  $\delta$ respects the rest of the data that give the $(\Gg, \M)$-displays
  $\calD_{\bar x}$ and $\calD'_{\bar x}$.
 \end{proof}

 \subsection{Local systems and associated displays}

 Let $\Dd$ be a $(\Gg, \M)$-display over the $p$-adic 
 formal scheme $\frakX=\varprojlim_n \calX\otimes_{\O_E}\O_E/(p)^n$. 
 
 \begin{Definition}\label{defass2}
 We say that the $(\Gg, \M)$-display $\Dd$ over $\frakX$ is associated with $\Lr$ if, for all
 $\bar x\in \calX(k)$, there is 
 \begin{itemize}
 \item a Dieudonn\'e $(\Gg, \M)$-display $\Dd_{\bar x}$
 which is associated with $\Lr$,
 \item
  an isomorphism of  $(\Gg, \M)$-displays
 \[
 \Dd_{\bar x}\otimes_{\hat W(\hat R_{\bar x})}W(\hat R_{\bar x})\simeq \Dd\otimes_{W(\O_{\frakX})}W(\hat R_{\bar x}).
 \] 
 \end{itemize}
 \end{Definition}
 
 Note that, then, $(\Lr, \{\Dd_{\bar x}\}_{\bar x\in \X(k)})$ is an associated system.

  \begin{Definition}
   We say that the $(\Gg, \Mloc)$-display $\Dd$ over $\frakX$ which is associated with $\Lr$, is locally universal over $\frakX$,
  if the associated system $(\Lr, \{\Dd_{\bar x}\}_{\bar x\in \X(k)})$ is \emph{locally universal} over $\X$.
  \end{Definition}

 \subsection{Rigidity and uniqueness}  
 
   Assume now that $\X$ and $\X'$ are two flat $\O_E$-schemes of finite type,  normal with the \emph{same} smooth generic fiber $X=\X[1/p]=\X'[1/p]$. Suppose that $(\Lr, \{\calD_{\bar x}\}_{\bar x\in \X(k)})$ and $(\Lr', \{\calD'_{\bar x'}\}_{\bar x'\in \X'(k)})$ are locally universal associated systems on $\X$ and $\X'$ respectively, with $\Lr= \Lr'$ on $X$.

     Denote by $\Y$ the normalization of the Zariski closure of the diagonal embedding of $X$ in the product $\X\times_{\Spec(\O_E)}\X'$. Denote by 
   \[
   \X\xleftarrow{\ \pi } \Y\xrightarrow{\pi'}\X',
   \]
    the morphisms given by the two projections. For simplicity, we again set $\breve\Y=\Y\otimes_{\O_E}\O_{\breve E}$, $\breve \X=\X\otimes_{\O_E}\O_{\breve E}$, etc.  For $\bar y\in \Y(k)$, set $\bar x=\pi(\bar y)$, $\bar x'=\pi'(\bar y)$.
   
   \begin{prop}\label{MainProp}
 a) We have
 \[
\pi^* \calD_{\bar x} \simeq  \pi'^* \calD'_{\bar x'}
\]
 as Dieudonn\'e $(\Gg, \M)$-displays on the completion $\hat\O_{\breve\Y ,\bar y}$.
 
  b) The morphism $\pi$  induces an isomorphism
  \[
  \pi^*: \hat\O_{\breve\X ,\bar x}\xrightarrow{\sim} \hat\O_{\breve\Y ,\bar y}
   \] between the completions  of $\breve \Y$ and $\breve \X$,   at $\bar y$ and $\bar x$, respectively. Similarly, for $\pi'$.
   \end{prop}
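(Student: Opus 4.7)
The plan is to dispatch (a) first and then derive (b) from it using versality.

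For (a), I apply the uniqueness Proposition \ref{depL} to $\Y$, which is normal, flat, of finite type over $\O_E$, with smooth generic fiber $X=\Y[1/p]$. The two pullbacks $\pi^*\calD_{\bar x}$ and $(\pi')^*\calD'_{\bar x'}$ are Dieudonn\'e $(\Gg,\M)$-displays over $\hat\O_{\breve\Y,\bar y}$, and I must verify that both are associated to the common local system $\Lr=\Lr'$ pulled back from the generic fiber. Condition (A1) is automatic, since it concerns the Tate module on the generic fiber. For condition (A2), any $\O_F$-lift $\tilde y\in\Y(\O_F)$ of $\bar y$ yields, by composition with $\pi$ and $\pi'$, $\O_F$-lifts of $\bar x$ and $\bar x'$ realizing the same crystalline representation $\rho(\tilde y)$; the required compatibility of Dieudonn\'e $(\Gg,\M)$-displays over $\O_F$ is inherited from the corresponding statements for $\calD_{\bar x}$ and $\calD'_{\bar x'}$. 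Proposition \ref{depL}, applied with $\calX=\Y$, then produces the isomorphism of (a).

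For (b), I exploit the versality of both associated systems together with the isomorphism from (a). By versality there exist rigid sections $s$ of $\calP_{\bar x}$ and $s'$ of $\calP'_{\bar x'}$ inducing isomorphisms $q_{\bar x}\cdot s:\hat R_{\bar x}\xrightarrow{\sim}\hat\M_{\bar m}$ and $q'_{\bar x'}\cdot s':\hat R'_{\bar x'}\xrightarrow{\sim}\hat\M_{\bar m'}$ for some $\bar m,\bar m'\in\M(k)$. Since $\Y$ is irreducible (as the normalization of the Zariski closure of the connected smooth $X$) and $\pi$ is dominant to the normal scheme $\X$, the map $\pi^*:\hat R_{\bar x}\to\hat\O_{\breve\Y,\bar y}$ is an injective local homomorphism. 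The closed immersion $\Y\hookrightarrow\X\times_{\O_E}\X'$ realizes $\hat\O_{\breve\Y,\bar y}$ as a quotient of the completed tensor product $\hat R_{\bar x}\hat\otimes_{\O_{\breve E}}\hat R'_{\bar x'}$, so surjectivity of $\pi^*$ reduces to showing $(\pi')^*(\hat R'_{\bar x'})\subseteq\pi^*(\hat R_{\bar x})$.

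To establish this inclusion, the isomorphism $\delta$ from (a) transports $(\pi')^*s'$ to a rigid section of $\pi^*\calP_{\bar x}$ that differs from $\pi^*s$ by some $h\in\Gg(\hat W(\hat\O_{\breve\Y,\bar y}))$ whose reduction modulo $W(\fraka)$ is a constant $g_0\in\Gg(W(k))$. Replacing $s$ by $g_0\cdot s$ (still rigid, with $\bar m$ replaced by $g_0\cdot\bar m$), I may assume $h\equiv 1\pmod{W(\fraka)}$. Using $\Gg$-equivariance of $q$, the compatibility from (a) becomes the equation
\[
q_{\bar y}\cdot\pi^*s=\bar h\cdot q'_{\bar y}\cdot(\pi')^*s'
\]
of maps $\Spec(\hat\O_{\breve\Y,\bar y})\to\M$, where $\bar h\in\Gg(\hat\O_{\breve\Y,\bar y})$ is congruent to $1$ modulo $\fraka$. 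The left side factors through $\pi^*$ composed with the versal isomorphism $q_{\bar x}\cdot s$, so its image in $\hat\O_{\breve\Y,\bar y}$ under coordinate functions on $\M$ lies in $\pi^*(\hat R_{\bar x})$; extracting coordinates from the right side then gives $(\pi')^*(\hat R'_{\bar x'})\subseteq\pi^*(\hat R_{\bar x})$, once the twist by $\bar h$ is controlled. The statement for $(\pi')^*$ follows symmetrically. The main obstacle is precisely the absorption of the twist by $\bar h$: since $\bar h$ is a nonconstant section of $\Gg$ (albeit trivial modulo $\fraka$), its action on maps to $\M$ mixes coordinates in a delicate way, and the argument requires carefully propagating the tangent-level agreement (which follows from $\bar h\equiv 1\pmod\fraka$) to a full ring-theoretic inclusion on the complete local rings.
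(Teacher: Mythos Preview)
Your argument for part (a) is correct and matches the paper's approach: both simply invoke the uniqueness argument of Proposition \ref{depL}.

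Your argument for part (b), however, has a genuine gap that you yourself flag in the last paragraph. Two issues:

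First, a technical correction: since $\Y$ is the \emph{normalization} of the Zariski closure of the diagonal, $\hat\O_{\breve\Y,\bar y}$ is not a quotient of $\hat R_{\bar x}\hat\otimes_{\O_{\breve E}}\hat R'_{\bar x'}$ but only a finite extension of such a quotient. This is not fatal by itself, but it means your reduction to the inclusion $(\pi')^*(\hat R'_{\bar x'})\subseteq\pi^*(\hat R_{\bar x})$ is not quite right as stated.

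Second, and more seriously, the step you call ``the main obstacle'' is genuinely unresolved. From the equation $q_{\bar y}\cdot\pi^*s=\bar h\cdot q'_{\bar y}\cdot(\pi')^*s'$ you cannot directly deduce that the image of $(\pi')^*$ lies in the image of $\pi^*$, precisely because $\bar h$ has coefficients in the larger ring $\hat\O_{\breve\Y,\bar y}$. Knowing $\bar h\equiv 1\pmod{\fraka}$ only gives agreement to first order, and there is no evident bootstrapping mechanism to promote this to the full inclusion.

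The paper's proof of (b) takes a different route that sidesteps this difficulty entirely. It applies the local Hodge embedding $\iota$ to pass from the $(\Gg,\M)$-displays to honest $p$-divisible groups $\mathscr G$, $\mathscr G'$ over $R$, $R'$. Versality of the $(\Gg,\M)$-display implies that $\mathscr G$ is a versal deformation of its closed fiber, so $\Spf(R)$ and $\Spf(R')$ embed as closed formal subschemes of a common universal deformation space $T=\Spf(U)$. The isomorphism from (a) gives $\pi^*\mathscr G\simeq\pi'^*\mathscr G'$ over $R''$, which forces the image $R_1$ of $R\hat\otimes R'\to R''$ to land inside the graph of the relabelling automorphism $f_0^*$ of $U$. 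A dimension count then shows $R\simeq R_1$, and normality finishes. The key advantage is that the universal deformation space absorbs all the ambiguity you are trying to control via $\bar h$: the graph $\Gamma\subset T\hat\times T$ is cut out by explicit linear equations $u\otimes 1-1\otimes f_0^*(u)$, and no group-action bookkeeping is needed.
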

   
\begin{proof}
Part (a) follows by the argument in the proof of Proposition \ref{depL}.  

Let us    show (b).
For simplicity, set $R=\hat \O_{\breve \X, \bar x}$, $R'=\hat\O_{\breve \X', \bar x'}$,
$R''=\hat\O_{\breve \Y, \bar y}$.
By the construction of $\Y$, we have a local homomorphism
\[
R\hat\otimes_{\O_{\breve E}}R'\to R''
\]
which is finite. Write $R_1$ for its image:
\[
R\hat\otimes_{\O_{\breve E}}R' \twoheadrightarrow R_1\hookrightarrow R''
\]
Applying $\iota$ and the functor ${\rm BT}$ to $\calD_{\bar x}$, $\calD'_{\bar x}$, gives $p$-divisible groups $\mathscr G$, $\mathscr G'$ over $R$, $R'$  respectively. By (a) we have
\begin{equation}\label{IsoPdiv}
  \pi^*\mathscr G\simeq \pi'^*\mathscr G'
 \end{equation}
 over $R''$. This isomorphism specializes to give $f_0: \bar x^*(\mathscr G)\xrightarrow{\sim}  \bar x'^*(\mathscr G')$, an isomorphism of $p$-divisible groups over the field $k$.
  
Let us write $T=\Spf(U)$  for the base change to $\O_{\breve E}$ of the universal deformation space of a $p$-divisible group $\mathscr G_0$ over $k$ which is isomorphic to the $p$-divisible groups $\bar x^*(\mathscr G)$ and $\bar x'^*(\mathscr G')$ above, and fix such isomorphisms. This allows us to view $f_0$ as an isomorphism $f_0: \mathscr G_0\xrightarrow{\sim} \mathscr G_0$.

Set $\Spf(R)=S$, $\Spf(R')=S'$, $\Spf(R'')=S''$, and   $Z=\Spf(R_1)$. By the locally universality condition on $\calD_{\bar x}$ and $\calD'_{\bar x}$, 
$S$ and $S'$ can both be identified with closed formal subschemes of $T$ given by ideals $I$ and $I'$ of $U$, respectively.  There is a closed formal subscheme $\Gamma$ of $T\hat\times_{\O_{\breve E}} T=\Spf(U\hat\otimes_{\O_{\breve E}}U)$ prorepresenting the subfunctor of pairs of deformations of $\mathscr G_0$ where $ f_0$ extends as an isomorphism. The subscheme $\Gamma$  is defined by the ideal generated by $(u\otimes 1-1\otimes f^*_0(u))$, $u\in U$, where $f^*_0: U\xrightarrow{\sim} U$ is the ``relabelling'' automorphism corresponding to $ f_0 $. By (\ref{IsoPdiv}), we have 
that $Z\subset T\hat\times_{\O_{\breve E}} T$ is contained (scheme theoretically) in the ``intersection''
 \[
 \Gamma\cap (S\hat\times_{\O_{\breve E}} S')=\Spf(U\hat\otimes_{\O_{\breve E}} U/((u\otimes 1-1\otimes f^*_0(u))_{u\in U}, I\otimes U, U\otimes I').
 \]
 The projection makes this isomorphic to $S\cap {f^*_0}^{-1}(S')$, the formal spectrum of $R/J$, where we set $J:=f^*_0(I')R$. From
 \[
 \Spf(R_1)=Z\subset \Gamma\cap (S\hat\times_{\O_{\breve E}} S')\simeq \Spf(R/J)
 \]
 we have $\dim(R_1)\leq \dim(R/J)$. Since $R_1$ is integral of dimension equal to that of $R''$
 and so of $R$, we have $\dim(R)=\dim(R_1)\leq \dim(R/J)\leq \dim(R)$. Since $R$ is an integral domain, this implies $J=(0)$ and that $R_1$, which is a quotient of $R/J$ of the same dimension, is also isomorphic to $R$. Since $R\simeq R_1\to R''$ is finite, and $R$, $R'$ and $R''$ are normal, the birational $R_1\to R''$ is an isomorphism; so is $R\to R''$ and, by symmetry, also $R'\to R''$.  
\end{proof}

\end{para}

 \subsection{Existence of associated systems} 
 
 Suppose that $\calX$, $\Lr$, and $\iota: \Gg\hookrightarrow \GL(\Lambda) $, are as in the beginning of Section \ref{s2}.

 \begin{thm}\label{Exists}
 Suppose that the \'etale local system $\Lr(\iota)$ is given by the Tate module of a $p$-divisible group $\sG$ over $\calX$.
 Then $\Lr$ is part of a unique, up to unique isomorphism, associated system $(\Lr, \{\alpha_{\bar x}, \calD_{\bar x}\}_{\bar x\in \calX(k)})$ for $\iota$.
  \end{thm}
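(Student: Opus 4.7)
The plan is to obtain uniqueness from Proposition~\ref{depL} and, for existence, to construct, for each $\bar x\in\calX(k)$, a Dieudonné $(\Gg,\M)$-display $\calD_{\bar x}$ over $\hat R_{\bar x}$ by equipping the Dieudonné module of $\sG$ with a $\Gg$-structure that descends the Galois-invariant tensors on $\Lr(\iota)$. Once the construction is in hand, uniqueness is immediate from Proposition~\ref{depL}.

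Fix $\bar x\in\calX(k)$ and set $R:=\hat R_{\bar x}$; this $R$ satisfies (CN) with residue field $k=\bar{\mathbb F}_p$. Zink's theory together with the identification (\ref{Dieudonne}) gives the Dieudonné display $(M,M_1,F_1)$ of the base change $\sG_R$, where $M=\DD(\sG_R)(\hat W(R))$. What remains is to produce $\phi$-invariant tensors $\tilde s_a\in M^\otimes$ cutting out a $\Gg$-torsor $\calP_{\bar x}$, a compatible morphism $q_{\bar x}:\calP_{\bar x}\otimes R\to\M$, and a Frobenius $\Psi_{\bar x}:\calQ_{\bar x}\xrightarrow{\sim}\calP_{\bar x}$; condition (A1) is then automatic since we started from $\sG_R$, and (A2) will be forced by the construction of $\tilde s_a$.

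To produce the tensors, pass to the integral perfectoid ring $S:=\tilde R^\wedge$ of \S\ref{appCompl} and let $M_\inf$ be the Breuil-Kisin-Fargues module over $A_\inf(S)$ attached to $\sG_S$, as in \S\ref{BKF}. The Scholze-Weinstein comparison supplies a $\phi$- and $\Gamma_R$-equivariant isomorphism
\[
\Lr(\iota)^\vee\otimes_{\Z_p} A_\inf(S)[1/\mu]\;\cong\;M_\inf[1/\mu],
\]
taking $s_a\otimes 1$ to $\Gamma_R$-invariant tensors $s_{a,\inf}\in M_\inf^\otimes[1/\mu]$. For each $W(k)$-algebra map $\alpha:S\to\O$ obtained from a lift $\tilde x:R\to\O_F$ of $\bar x$, naturality together with \S\ref{BKFcompatible} identifies the pullback of $s_{a,\inf}$ with the integral BKF tensor of \S\ref{BKF} attached to the crystalline representation $\rho(x)$; Corollary~\ref{corinf} then yields the integrality of $s_{a,\inf}$ over $A_\inf(S)$. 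Pushing forward along $\theta_\infty:A_\inf(S)\to W(S)$ produces $\Gamma_R$-invariant tensors in $M^\otimes\otimes_{\hat W(R)}W(S)$, and Theorem~\ref{FaltingsAlmost} combined with Proposition~\ref{corInter2} permits $\xi$-pointwise descent to tensors $\tilde s_a\in M^\otimes$: the required image in each $M^\otimes\otimes\hat W(\O_F)$ is prescribed, via \S\ref{BKFcompatible}, by the Breuil-Kisin $\Gg$-module of $\rho(x)$ constructed in \S\ref{BKdisplay}.

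Finally, set $\calP_{\bar x}:=\SIsom_{(\tilde s_a),(s_a)}(M,\Lambda\otimes_{\Z_p}\hat W(R))$. By the $\hat W(R)$-form of Corollary~\ref{CorRaynaudGruson} this is a $\Gg$-torsor, since on each $\hat W(\O_F)$-specialization it coincides with the $\Gg$-torsor attached in \S\ref{BKdisplay} to $\rho(x)$. The filtration $M_1$ and divided Frobenius $F_1$ then yield the morphism $q_{\bar x}$ (which lands in the Zariski closure $\M$, as can be checked on $\O_F$-points) and the isomorphism $\Psi_{\bar x}$, completing the Dieudonné $(\Gg,\M)$-display $\calD_{\bar x}=(\calP_{\bar x},q_{\bar x},\Psi_{\bar x})$. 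The hard part will be the descent of the tensors from $A_\inf(S)$ down to $\hat W(R)$: one must combine the relative Scholze-Weinstein comparison with the pointwise Breuil-Kisin theory of Section~4 and with the intersection and Galois-invariance results of Section~1 in order to pin down a single global tensor that agrees, along every $\O_F$-specialization, with the tensor coming from \S\ref{BKdisplay}.
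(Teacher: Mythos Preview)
Your proposal is correct and follows essentially the same route as the paper: uniqueness via Proposition~\ref{depL}, then existence by pushing the Galois-invariant tensors $s_{a,\inf}$ from $A_\inf(S)$ down to $\hat W(R)$ using Corollary~\ref{corinf}, Theorem~\ref{FaltingsAlmost}, and Proposition~\ref{corInter2}, and assembling $(\calP_{\bar x},q_{\bar x},\Psi_{\bar x})$ via Corollary~\ref{CorRaynaudGruson} and pointwise checks. The only step you slightly understate is that ``pushing forward along $\theta_\infty$'' really means invoking the identification $M\otimes_{\hat W(R)}W(S)\simeq M_\inf(S)\otimes_{A_\inf(S)}W(S)$, which in the paper comes from Scholze's crystalline description of $M_\inf\otimes A_{\rm cris}$ together with Lau's Theorem~B; but you have the architecture right.
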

 
 \begin{proof}
 The uniqueness part of the statement follows from Proposition \ref{depL}. Our task is to construct, for each $\bar x\in \calX(k)$, a $(\Gg, \calM)$-display $\calD_{\bar x}$ over $R=\hat R_{\bar x}$ that satisfies (A1) and (A2). Let 
 \[
 (M, M_1, F_1)
 \]
be the Dieudonn\'e display obtained by the evaluation $M=\DD(\sG)(\hat W(R))$ of the Dieudonn\'e crystal of $\sG$ over $R$. 
This gives a $(\GL_n, {\rm Gr}(d, \Lambda))$-display
$(\P_{\GL}, q_{\GL}, \Psi_{\GL})$. 
We want to upgrade this to a $(\Gg, \calM)$-display, the main difficulty being the construction of appropriate tensors $m_a\in M^\otimes$. The construction occupies several paragraphs:
 
 \begin{para}\label{332}

 We recall the notations and results of  \S\ref{appCompl}, \S\ref{appAinf}, for $R$.
In particular, we  fix an algebraic closure $\overline{ F(R)}$ of the fraction field $F(R)$, we denote by $\bar R$ the integral closure of $R$ in $\overline{ F(R)}$ and by $\ti R$ the union of all finite 
normal $R$-algebras $R'$ in $\overline{ F(R)}$ such that $R'[1/p]$ is \'etale over $R[1/p]$. Set $\bar R^\wedge$ and $\ti R^\wedge $ for their $p$-adic completions. For simplicity, we set
\[
S=\ti R^\wedge.
\]
Also, we set $\O$ for the $p$-adic completion of the integral closure $\bar\O_E$ of  $\O_E$ in the algebraic closure $\bar E$. 
 
 By \S \ref{appCompl}, $\bar R^\wedge$, $S=\ti R^\wedge$,  and $\O$, are integral perfectoid $\Z_p$-algebras in the sense of \cite[3.1]{BMS}, which are local Henselian and flat over $\Z_p$. 
 The Galois group 
 $\Gamma_R$ acts on $\ti R$, on $S$, and on $A_\inf(S)=W(S^\flat)$. 
 \end{para}

\begin{para}
Let $(M(\sG)=M(\mathscr G)(S), \phi_{M(\sG)})$ be the (finite free) Breuil-Kisin-Fargues   module over $A_{\inf}(S)$ attached to the base change  $\mathscr G_S$ of     $\mathscr G$. 

By \cite[Theorem 17.5.2]{Schber},
$M(\sG)(S)$ is the value of a functor which gives an equivalence between $p$-divisible groups over $S$ and finite projective BKF modules $(M, \phi_M)$ over $A_\inf(S)$ that satisfy
\[
M\subset \phi_{M}(M)\subset \frac{1}{\phi(\xi)}M.
\]
By loc. cit., the equivalence is functorial in $S$. Therefore,  $M(\sG)(S)$ supports an action of $\Gamma_R$ which 
commutes with $\phi_{M(\sG)(S)}$ and is semi-linear with respect to the action of $\Gamma_R$ on $A_\inf(S)$. By 
loc. cit., we have
\begin{equation}
T={\rm Hom}_S(\underline{\Q_p/\Z_p}, \mathscr G_S)\xrightarrow{\sim} {\rm Hom}_{A_{\inf}(S), \phi}(A_{\inf}(S), M(\sG))=M(\sG)^{\phi_{M(\sG)}=1}.
\end{equation}
This gives the comparison homomorphism
\[
c: T\otimes_{\Z_p} A_{\inf}(S)\xrightarrow{\ } M(\sG)(S)
\]
which is $\phi$ and Galois equivariant. 

Using the constructions in \cite[\S 17]{Schber} together with Lemma \ref{lemmainf0} and Proposition \ref{lemmainf1}, we see that $c$ is injective and gives
\[
T \otimes_{\Z_p}A_{\rm inf}(S)\subset M(\sG)(S)\subset   T\otimes_{\Z_p}\frac{1}{\mu}A_{\rm inf}(S).
\]
Therefore, we obtain a ``comparison'' isomorphism
\begin{equation}\label{tensorInf}
T^{\otimes }\otimes_{\Z_p}A_{\rm inf}(S)[1/\mu]\xrightarrow{\sim} M(\sG)(S)^{\otimes}[1/\mu].
\end{equation}
\end{para}

\begin{para}\label{536}
Let us set:
\[
M_{\rm inf}(S):=M(\sG)(S).
\]
Let 
\[
s_{a, \inf}\in M_\inf(S)^{\otimes}[1/\mu]
\]
 be the  tensors 
which correspond to  $s_{a}\in T^{\otimes}$ under  (\ref{tensorInf}). 
We have 
\[
\phi_{M_\inf}(s_{a,\inf})=s_{a,\inf}.
\]

 We can now construct a Breuil-Kisin-Fargues $\Gg$-module $(D_{\inf}(S), \phi_{D_{\inf}})$ over $A_\inf(S)$.
 
 \begin{prop}\label{p537}
 a) We have $s_{a, \inf}\in M_\inf(S)^\otimes$.
 
 b) By (a), we can consider 
 the $\Gg$-scheme
 \[
 D_{\inf}(S):=\underline{\rm Isom}_{(s_{a, \inf}), (s_a\otimes 1)}(M_\inf(S), \Lambda\otimes_{\Z_p}A_\inf(S)).
 \]
The scheme $D_{\inf}(S)$, with its natural $\Gg$-action, is  a $\Gg$-torsor over $A_\inf(S)$.
 
 c) There is a $\Gg$-equivariant isomorphism
 \[
\phi_{D_{\inf}}: (\phi^*D_{\inf}(S))[1/\phi(\xi)]\xrightarrow{\sim}D_{\inf}(S)[1/\phi(\xi)],
 \]
 where $\xi$ is any generator of the kernel of $\theta: A_\inf(S)\to S$.
 
 d) Suppose $\ti x: S\to \O$ extends  a point $\ti x: R\to \O_F$ which lifts $\bar x$.
 Then the base change of $(D_{\inf}(S), \phi_{D_{\inf}})$ by $\ti x: S\to \O$ is isomorphic to the BKF $\Gg$-module over $A_\inf(\O)$ which is attached to $\ti x^*\Lr$ by \ref{BKF}.
 \end{prop}

 \begin{proof}
 Consider  $ \ti x: S\to \O$ as in (d).  Let $(M(\sG)(\O), \phi_{M(\sG)(\O)})$ be the BKF module over $A_\inf(\O)$ attached to the 
$p$-divisible group $\ti x^*\mathscr G$ over $\O$.  
By functoriality under $S\to \O$ of the functor of \cite[Theorem 17.5.2]{Schber}, we have a canonical isomorphism
\[
M_\inf(S)\otimes_{A_\inf(S)}A_\inf(\O)\cong M_\inf(\O)=M(\sG)(\O)
\]
respecting the Frobenius structures. 

\quash{Note that for each such $\ti x$,
$(M(\sG)(\O), \phi_{M(\sG)(\O)})$ supports a BKF $\Gg$-module structure
which is attached to the a $\GG(\Z_p)$-valued crystalline representation
obtained by specializing ${\rm L}$ to $\tilde x: R\to \O_{\bar E}$.}

 \begin{lemma}\label{l538}
The pull-back
\[
\ti x^*(s_{a, \inf})\in M_\inf(S)^{\otimes}[1/\mu]\otimes_{A_\inf(S)}A_\inf(\O)=M_\inf(\O)^{\otimes}[1/\mu]
\]
lies in $M_\inf(\O)^{\otimes}$.
\end{lemma}

\begin{proof}  Recall that the tensors $s_{a, \inf}\in M_\inf(S)^{\otimes}[1/\mu]$ are defined using the comparison isomorphisms (\ref{tensorInf}). The statement  then follows from functoriality under $S\to \O$ using the fact that 
$(M(\sG)(\O), \phi_{M(\sG)(\O)})$ supports a $\Gg$-BKF module structure
(so, in particular, the corresponding \'etale tensors in $M_\inf(\O)^{\otimes}[1/\mu]$ extend over $A_\inf(\O)$,
 i.e. have no $\mu$-denominators.) 
\end{proof}

Now we can proceed with the proof of the proposition. Part (a) follows from the above Lemma and Lemma \ref{lemmainf1}.
By Lemma \ref{lemmainf0} and Proposition \ref{RaynaudGruson},  $D_{\inf}(S)$ is a $\Gg$-torsor, i.e. part (b) holds.
The identity  $\phi_{M_\inf}(s_{a,\inf})=s_{a,\inf}$ holds in $M_\inf(S)^\otimes [1/\mu]$ and so also in $M_\inf(S)^\otimes$ since
\[
M_\inf(S)\subset M_\inf(S)[1/\mu]
\] 
by Lemma \ref{lemmainf1}. 
Therefore, 
 \[
\phi_{D_{\inf}}: (\phi^*D_{\inf}(S))[1/\phi(\xi)]\xrightarrow{\sim}D_{\inf}(S)[1/\phi(\xi)].
 \]
 is $\Gg$-equivariant, which is (c). Finally, (d) follows from the above and functoriality under $S\to \O$.
 \end{proof}
 
 \begin{Remark} {\rm 
 a) Using these constructions and the comparison
 \[
 T\otimes_{\Z_p}A_\inf(S)[1/\mu]\simeq M(\sG)(S)[1/\mu],
 \]
 we can see that the BKF $\Gg$-module $(D_\inf(S), \phi_{D_\inf})$ only depends, up to isomorphism, on $\Lr$.
 Indeed, from \S\ref{BKF}, this statement is true when $S=\O$. In general, 
 the comparison isomorphism first implies that  the $\Gg$-torsor $D_\inf(S)[1/\mu]$ depends, up to isomorphism, only on $\Lr$. Then, by considering
 restriction along   $\ti x: S\to\O$ and using Lemma \ref{lemmainf1} (b), we see that we can determine $D_\inf(S)$ and $\phi_{D_{\inf}}$
 over $A_\inf(S)$.
 
b) As usual, we may think of $D_\inf(S)$ as an exact tensor functor
 \[
 {\rm Rep}_{\Z_p}(\Gg)\to {\rm Mod}^\phi_{/A_\inf(S)}.
 \]
 }
 \end{Remark}
 \end{para}

\begin{para}

 We can now complete the proof of Theorem \ref{Exists}.
 Recall that
 \[
 \theta_\infty: A_\inf(S)\to W(S)
 \]
  factors as a composition
 \[
  \theta_{\infty}: A_\inf(S)\to A_{\rm cris}(S)\to W(S).
  \]
By, \cite[Theorem 17.5.2]{Schber}, the  Frobenius module 
 \[
  M_\inf(S)\otimes_{A_{\inf}(S)}A_{\cris}(S) =M(\sG)(S)\otimes_{A_{\inf}(S)}A_{\cris}(S)
 \]
 describes the covariant Dieudonn\'e module of the base change $\mathscr G_S$ evaluated at the divided power thickening $A_{\rm cris}(S)\to S$. By  \cite[Theorem B]{Lau}, this evaluation of the Dieudonn\'e module is naturally isomorphic to $M\otimes_{\hat W(R) } W(S)$, with its Frobenius structure. Combining these now gives a natural isomorphism
\begin{equation}\label{combEq}
 M\otimes_{\hat W(R) } W(S)\simeq M_\inf(S)\otimes_{A_{\inf}(S)} W(S)
 \end{equation}
 which is compatible with Frobenius and the action of $\Gamma_R$. We obtain
 \[
 M^\otimes\otimes_{\hat W(R) } W(S)\simeq M_\inf(S)^\otimes\otimes_{A_{\inf}(S)} W(S).
 \]
 Since the tensors $s_{a, \inf}\otimes 1\in M_\inf(S)^\otimes\otimes_{A_{\inf}(S)} W(S)$ are $\Gamma_R$-invariant,
 we see that 
 \[
 s_{a, \inf}\otimes 1\in (M^\otimes\otimes_{\hat W(R) } W(S))^{\Gamma_R}=M^\otimes\otimes_{\hat W(R)} (W(S))^{\Gamma_R}.
 \]
 By Theorem \ref{FaltingsAlmost}, $(W(S))^{\Gamma_R}=W(R)$. Therefore, 
 \begin{equation}\label{combEqTen}
 s_{a, \inf }\otimes 1\in M^\otimes\otimes_{\hat W(R)} W(R).
 \end{equation}
 \end{para}

 In fact, we also have:
 
 \begin{prop}\label{Propinfres}
a) The tensors $m_a:=s_{a, \inf }\otimes 1$  lie in $M^\otimes$. 

b) The identity 
 \begin{equation}\label{identityxi}
 \ti x^*(m_a)=s_{a, \scriptscriptstyle{\rm D}, \ti x} 
 \end{equation}
holds in $ M^\otimes\otimes_{\hat W(R)}\hat W(\O_F)\cong M^\otimes_{\O_F}$.
\end{prop}
 
 In the above, $(M_{\O_F}, M_{1, \O_F}, F_{1, \O_F})$ is the Dieudonn\'e display over $\O_F$ associated
 by \S\ref{BKdisplay} to the Galois representation on $\Lambda$ given by $\ti x^*\Lr(\iota)$, and 
 $s_{a, \scriptscriptstyle{\rm D}, \ti x} $
are the corresponding tensors.
 
 \begin{proof} Using the compatibility of the construction with pull-back along points $\ti x: R\to \O_F$ we first see that the identity (\ref{identityxi}) holds in the tensor product $ M^\otimes\otimes_{\hat W(R)}  W(\O_F)$. However, the right hand side $s_{a, \scriptscriptstyle{\rm D}, \ti x}$ lies in the subset $M^\otimes\otimes_{\hat W(R)}\hat W(\O_F)$, and, hence, so is the left hand side $\ti x^*(m_a)$. Proposition \ref{corInter2} now  implies   (a), and (b) also follows.
\end{proof}
\smallskip

 We continue with the proof of Theorem \ref{Exists}. The tensors $m_a\in M^\otimes$ allow us to define 
 \[
 \calP:=\underline{\rm Isom}_{m_a, s_a\otimes 1}(M, \Lambda\otimes_{\Z_p}\hat W(R)).
 \] 
By the above, $\xi^*\calP$ is isomorphic to the $\Gg$-torsor given in \S\ref{BKdisplay}. By  Corollary \ref{CorRaynaudGruson} for $A=\hat W(R)$, $\calP$ is a $\Gg$-torsor over $\hat W(R)$. By definition, we have $\calP(\iota)=M$. 
It remains to construct $q$ and $\Psi$.

The filtration $I_RM\subset M_1\subset M$ gives a filtration of $M/I_RM$:
\[
(0)\subset {\rm Fil}^1:=M_1/I_RM\subset {\rm Fil}^0:=M/I_RM.
\]
This induces a filtration ${\rm Fil}^{\otimes, 0}$ of $ (M/I_RM)^\otimes$ and we have
\[
m_a\otimes 1\in {\rm Fil}^{\otimes, 0}\subset (M/I_RM)^\otimes[1/p]
\]
since this is true at all $F$-valued points.
Hence, $q: \P_{\GL}\otimes_{\hat W(R)}R\to {\rm Gr}(d, \Lambda)$ restricted to $\P\otimes_{\hat W(R)}R\subset \P_{\GL}\otimes_{\hat W(R)}R$ lands in $X_\mu(G)$ on the generic fiber. Since $\M$ is the Zariski closure of $X_\mu(G)$ in ${\rm Gr}(d, \Lambda)_{\O_E}$, we obtain
\[
q: \P\otimes_{\hat W(R)}R\to \M.
\]
Recall that we use $q$ to define the $\Gg$-torsor $\calQ$. From the construction, we have a $\Gg$-equivariant closed immersion $\calQ\subset \calQ_{\GL}$.
Finally, let us give $\Psi$: We consider $\Psi_{\GL}:\calQ_{\GL}\xrightarrow{\sim} \calP_{\GL}$. We will check that this restricts to $\Psi: \calQ\xrightarrow{\sim}\calP$: For this, is enough to show that the map $\Psi=\Psi_{\GL}(\iota): \tilde M_1\xrightarrow{\sim} M$ given by $\Psi_{\GL}$ preserves the tensors, i.e. 
\begin{equation}\label{phieq}
\Psi(\phi^*(m_a))=m_a.
\end{equation} 
This follows as in the proof of Proposition \ref{depL} by observing that the tensors are preserved after pulling back by all $\ti x: R\to\O_F$: 
Indeed, we have $\Psi_{\O_F}(\phi^*s_{a, \scriptscriptstyle{\rm D}, \ti x})=s_{a, \scriptscriptstyle{\rm D}, \ti x} $. Since, by (\ref{identityxi}) we also have $\ti x^*m_a=s_{a, \scriptscriptstyle{\rm D}, \ti x}$, we conclude that $\ti x^*\Psi=\Psi_{\O_F}$ maps $\phi^*(\ti x^*m_a)$ to $\ti x^*m_a$. The identity (\ref{phieq}) now follows.

The above define the $(\Gg, \M)$-display $\calD_{\bar x}=(\P, q, \Psi)=(\P_{\bar x}, q_{\bar x}, \Psi_{\bar x})$. By its construction, $ \calD_{\bar x}$ satisfies (A1) and (A2). This completes the proof of Theorem \ref{Exists}. 
 \end{proof}

 \begin{para}
    In fact, the proof of the Theorem \ref{Exists} also gives:
  
  \begin{prop}
 There is an isomorphism of $\Gg$-torsors
\begin{equation}\label{combEq2}
\calP_{\bar x}\otimes_{\hat W(R)}W(S)\simeq  D_{\inf}(S)\otimes_{A_\inf(S)}W(S),
\end{equation}
which is also compatible with the Frobenius structures.\hfill$\square$
  \end{prop}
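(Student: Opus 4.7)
The plan is to deduce this directly from the comparison isomorphism (\ref{combEq}) together with the very definition of the tensors $\und s_a$. Recall that $\calP_{\bar x}$ was defined as the scheme of tensor-preserving isomorphisms $\underline{\rm Isom}_{\und s_a,\, s_a\otimes 1}(M,\Lambda\otimes_{\Z_p}\hat W(R))$, while $D_{\inf}(S)=\underline{\rm Isom}_{(s_{a,\inf}),(s_a\otimes 1)}(M_\inf(S),\Lambda\otimes_{\Z_p}A_\inf(S))$. The base changes to $W(S)$ use the inclusion $\hat W(R)\hookrightarrow W(S)$ (which makes sense since $R\to S$ factors $R\to \ti R\to \ti R^\wedge=S$, and $\hat W(-)\subset W(-)$ is functorial) on one side and $\theta_\infty\colon A_\inf(S)\to W(S)$ on the other.

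First I would invoke the isomorphism (\ref{combEq}), which is canonical, $\phi$-equivariant, and $\Gamma_R$-equivariant, to identify $M\otimes_{\hat W(R)}W(S)$ with $M_\inf(S)\otimes_{A_\inf(S)}W(S)$. This immediately gives an isomorphism of $\GL$-torsors $\calP_{\GL,\bar x}\otimes_{\hat W(R)}W(S)\simeq D_{\inf,\GL}(S)\otimes_{A_\inf(S)}W(S)$ by passing to the associated $\underline{\rm Isom}$-scheme, and this isomorphism is compatible with Frobenius because (\ref{combEq}) is.

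Next I would check that under this identification the tensors match: the image of $\und s_a\otimes 1$ is $s_{a,\inf}\otimes 1$. But this is precisely the defining identity, since $\und s_a$ was constructed in \S\ref{536}--(\ref{combEqTen}) as the element of $M^\otimes\otimes_{\hat W(R)}W(R)\subset M^\otimes\otimes_{\hat W(R)}W(S)$ coming from $s_{a,\inf}\otimes 1$ via (\ref{combEq}) (and shown in Proposition \ref{Propinfres} to actually lie in $M^\otimes$). Consequently, the isomorphism of $\GL$-torsors restricts to a $\Gg$-equivariant isomorphism between the subschemes $\calP_{\bar x}\otimes_{\hat W(R)}W(S)$ and $D_\inf(S)\otimes_{A_\inf(S)}W(S)$, giving the desired isomorphism of $\Gg$-torsors.

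Finally, for Frobenius compatibility: the Frobenius on $\calP_{\bar x}$ is obtained from the divided Frobenius $F_1$ on $(M,M_1)$, while the one on $D_\inf(S)$ is $\phi_{D_\inf}$ coming from $\phi_{M_\inf}$; both become identified after base change to $W(S)[1/p]$ via (\ref{combEq}), since that identification intertwines the Frobenius structures on the Dieudonn\'e side (by \cite[Thm.~B]{Lau}) and on the BKF side (by \cite[Thm.~17.5.2]{Schber}). There is no real obstacle here — the entire construction of $\und s_a$ was engineered precisely so that this comparison is formal; the only thing to verify carefully is that the isomorphism at the level of $\GL$-torsors restricts to the $\Gg$-torsor subschemes, which is immediate from matching the tensors.
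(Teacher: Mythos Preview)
Your proposal is correct and matches the paper's own reasoning: the paper simply states that this proposition is given by the proof of Theorem~\ref{Exists}, and what you have written is exactly an unpacking of that claim---the identification (\ref{combEq}) of the underlying modules together with the fact that $\und s_a$ is by construction the image of $s_{a,\inf}\otimes 1$ under (\ref{combEq}) immediately gives the $\Gg$-torsor isomorphism, with Frobenius compatibility inherited from that of (\ref{combEq}).
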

  
   \end{para}

\smallskip
 
 \subsection{Independence} We show that the notion of ``associated" does not depend on the choice of the (strongly integral) local Hodge embedding $\iota$. More precisely:
 
 \begin{prop}\label{indassociated}
If $\Lr$ and $\calD_{\bar x}$ are associated for $\iota$, i.e. satisfy (A1) and (A2) for $\iota$, they also satisfy (A1) and (A2) for any other (strongly integral) local Hodge embedding $\iota'$.  \end{prop}
 \begin{proof}

 Assume that $\Lr$ and $\calD_{\bar x}$ are associated for $\iota$. By the uniqueness part of Theorem \ref{Exists}, we can assume that 
 $\calD_{\bar x}$ is obtained from $\Lr$ and $\iota$ by the construction in its proof. 
 We will use the notations of \S\ref{311}, \S\ref{332}:
 In particular, $R=\hat R_{\bar x}$ and $\sG=\sG(\bar x)$ is the $p$-divisible group over $R$ given by $\calD_{\bar x}(\iota)$.
 By \cite{LauGalois}, the Tate module $T:=T_p(\mathscr G)(\bar R)$ 
 can be identified with the kernel of 
 \[
 F_{1 }-1: \hat M_1 \to \hat M =M \otimes_{\hat W(R)}\hat W({\bar R}).
 \]
Here, we denote abusively by $\hat W(\bar R)$ the $p$-adic completion of 
 \[
 \varinjlim_{L/F(R)}\hat W(R_L)
 \]
 where $L$ runs over finite extensions of $F(R)$ in $\overline {F(R)}$, and $R_L$ is the normalization of $R$ in $L$. There is a natural surjective homomorphism $\hat W(\bar R)\to \bar R^\wedge$. 
 In the above, we set
 \[
 \hat M_1={\rm ker}(\hat M\to (M/M_1)\otimes_R \bar R^\wedge).
 \]
 The isomorphism
 \[
 {\rm per}: T\xrightarrow{\sim} \ker( F_{1 }-1: \hat M_1 \to \hat M )
 \]
 induces the comparison homomorphism
\[
T\otimes_{\Z_p}\hat W(\bar R)\to  M \otimes_{\hat W(R)}\hat W({\bar R}).
 \] 
 
 Let us  consider a second local Hodge embedding $\iota':\Gg\hookrightarrow \GL(\Lambda')$ which realizes $\Gg$ as the stabilizer of tensors $(s'_b)$. 
 
 As before, we have a $p$-divisible group $\mathscr G'=\mathscr G'(\bar x)$ over $R=\hat R_{\bar x}$ given by $\calD_{\bar x}(\iota')=(M', M'_1, F'_1)$. Denote its Tate module by  $T(\mathscr G')$. To show (A1) for $\iota'$, we have to give an isomorphism of $T(\mathscr G')$ with $T':=\Lr(\iota')=\Lambda'$. 
 
 Recall (\S\ref{536}), that we have a $\phi$-invariant isomorphism
\[
\Lambda \otimes_{\Z_p}A_{\rm inf}(S)[1/ \mu]    \simeq M_\inf(S)[1/ \mu]
\]
that sends the tensors $s_{a}\otimes 1$ to $s_{a, \inf}$. By a standard Tannakian argument
we see that this gives an isomorphism 
\begin{equation}\label{tannakaComp}
\Lambda'\otimes_{\Z_p}A_{\rm inf}(S)[1/\mu]\simeq M'_\inf(S)[1/\mu]
\end{equation}
where $M'_\inf(S)=D_{\inf}(S)(\iota')$ is obtained from the $\Gg$-torsor $D_{\inf}(S)$. 

 Since $\iota'$ is also a local Hodge embedding, we can see using
 \cite[Theorem 17.5.2]{Schber}, that $M'_\inf(S)$ is the BKF module $M(\mathscr H)$ of some $p$-divisible group $\mathscr H$ over $S$, so 
 \[
 M'_\inf(S)\simeq M(\mathscr H).
 \]
 
 \begin{lemma}\label{twoBTs}
For all $\ti x: S\to \O$ obtained from $\ti x: \ti R\to \bar\O_E$, we have:
 
 a)  $\ti x^*\mathscr H\simeq \ti x^* \sG'$,
 
 b) $T'\simeq T_p(\ti x^*\sG')$.
 \end{lemma}
 
 \begin{proof} Consider the Breuil-Kisin $\Gg$-module $(\calP_{\rm BK}, \phi_{\calP_{\rm BK}})$ attached to $\ti x^*\Lr$ by \S\ref{BKtorsor}. Then, $(\calP_{\rm BK}(\iota'), \phi_{\calP_{\rm BK}}(\iota'))$ gives a  ``classical'' Breuil-Kisin module which corresponds to a $p$-divisible group $\sG'_{\ti x}$ over $\O_F$. 
 The construction of the Breuil-Kisin $\Gg$-module implies that $T'\simeq \ti x^*\Lr(\iota')$ is identified with the Tate module $T_p(\sG'_{\ti x})$. By the compatibility (\S\ref{BKFcompatible}) of the constructions
 in \S\ref{BKF} and \S\ref{BKdisplay},  Proposition \ref{Propinfres}, and the fact \cite[Theorem 17.5.2]{Schber} that the  functor $M(-)$
 gives an equivalence of categories between $p$-divisible groups over $\O$ and (suitable) BKF modules over $A_\inf(\O)$, the base change of $\sG'_{\ti x}$ to $\O$ is isomorphic to both 
$ \ti x^*\mathscr H$ and $\ti x^* \sG'$. This gives (a). In fact, since the Tate module of $\sG'_{\ti x}$ is identified with $T'$, we then obtain (b).
 \end{proof}

 \smallskip
  
From (\ref{tannakaComp}), we obtain an injection
\[
c': \Lambda'=T'\hookrightarrow M'_\inf(S)[1/\mu].
\]
\begin{lemma}\label{545}
The map $c'$ gives an isomorphism
\[
   c': T'\xrightarrow{\ \sim   } M'_\inf(S)^{\phi_{M'}=1} ={M(\mathscr H)}^{\phi_{M(\mathscr H)}=1}
  \]
   which identifies $T'$ with the Tate module of $\mathscr H$.
\end{lemma}

\begin{proof}
For all $\ti x: S\to \O$ given by $\ti x: \ti R\to \bar\O_E$, we consider the composition
\[
T'\hookrightarrow  M'_\inf(S)[1/\mu] \to  M'_\inf(\O)[1/\mu]
\]
where the second map is given by pull-back along $A_\inf(S)\to A_\inf(\O)$ and functoriality.
From Lemma \ref{twoBTs} and its proof, we see that this composition is identified with the comparison isomorphism for $\ti x^*\mathscr H$ and so its image is contained in $M'_\inf(\O)$, in fact in $M'_\inf(\O)^{\phi_{M'^\vee}=1}$. It follows from Corollary \ref{corinf} (a) that the image of $c'$ is contained in 
$M'_\inf(S)$. 
Hence, we obtain:
\[
c':T'= \Lambda'\hookrightarrow M'_\inf(S)^{\phi_{M'}=1}.
\] 
Now, for all such $\ti x: S\to \O$, consider   
\[
T'\hookrightarrow M'_\inf(S)^{\phi_{M'}=1}\to M'_\inf(\O)^{\phi_{M'}=1}.
\]
As above, this composition is identified with the comparison map 
 for $\ti x^*\mathscr H$ and is therefore an isomorphism.  However,    
 \[
 (M'_\inf(S))^{\phi_{M'}=1}\simeq (M(\mathscr H))^{\phi_{M(\mathscr H)}=1}
 \]
 is the Tate module of $\mathscr H$, a finite free $\Z_p$-module  of rank equal to
  ${\rm rank}_{\Z_p}(T')$. Therefore 
  \[
   c': T'\xrightarrow{\ \sim   } M'_\inf(S)^{\phi_{M'}=1} ={M(\mathscr H)}^{\phi_{M(\mathscr H)}=1}
   \] 
   is an isomorphism and it identifies $T'$ with the Tate module of $\mathscr H$ as desired.
   \end{proof}
   \smallskip
   
 \quash{  The lemma and duality now gives that we also have
 \begin{equation}\label{twedgeeq}
  T'^\vee(1)\simeq {(M'_\inf(S)\{1\})}^{\phi_{M'\{1\}}=1}={M(\mathscr H^*)}^{\phi_{M(\mathscr H^*)}=1}
 \end{equation}
with both sides identifying with the Tate module of $\mathscr H^*$.
}

\begin{lemma}
The natural homomorphism
 \[
g: A_\inf(S)\xrightarrow{\theta_\infty} W(S)=W(\ti R^\wedge)\to W(\bar R^\wedge)
 \]
  factors through $ \hat W(\bar R)$ as a composition
 \[
 A_\inf(S)\hookrightarrow A_{\rm cris}(S)\to \hat W(\bar R)\to W(\bar R^\wedge).
  \]
\end{lemma}

\begin{proof} The diagram
\[
\begin{matrix}
 A_\inf(S) & \xrightarrow{\theta_\infty}& W(S) \\
\downarrow&&\downarrow \\
 A_\inf(\bar R^\wedge)&\xrightarrow{\theta_\infty} &W(\bar R^\wedge) 
 \end{matrix}
 \]
with vertical arrows given by $S\to \bar R^\wedge$, is commutative. Hence, the composition $g$ is  equal to
\[
A_\inf(S)\to A_\inf(\bar R^\wedge)\xrightarrow{\theta_\infty} W(\bar R^\wedge).
\]
We want to show $\theta_\infty: A_\inf(\bar R^\wedge)\xrightarrow{\ } W(\bar R^\wedge)$ factors through $\hat W(\bar R)$. 
We can argue as in the proof of \cite[Lemma 6.1]{LauGalois}: Note that, for each $n\geq 1$, 
all elements $a$ of the kernel of $\hat W(\bar R)/p^n\to \bar R^\wedge/p\bar R^\wedge$ satisfy $a^{p^n}=0$. Therefore, by the universal property of the Witt vectors (e.g. \cite[Lemma 1.4]{Lau}), this gives
\[
A_\inf(\bar R^\wedge)=W((\bar R^\wedge)^\flat )\to \hat W(\bar R)\to \bar R^\wedge/p\bar R^\wedge.
\]
This lifts  $\theta_{\bar R^\wedge} : A_\inf(\bar R^\wedge) \to \bar R^\wedge$. Now since $\hat W(\bar R)\to \bar R^\wedge$ is a divided power extension of $p$-adic rings, the map $\theta_{\bar R^\wedge}$ factors
\[
A_\inf(\bar R^\wedge)\to A_{\rm cris}(\bar R^\wedge)\to \hat W(\bar R)\to \bar R^\wedge
\]
 and using this we can conclude the proof.
  \end{proof}
  \smallskip
  
As in (\ref{combEq}), (\ref{combEq2}), we can use the above lemma to obtain an isomorphism
 \begin{equation}\label{combEq3}
 M'_\inf(S)\otimes_{A_{\inf}(S)}\hat W(\bar R)\simeq  M'\otimes_{\hat W(R)}\hat W(\bar R)
 \end{equation}
 which respects the Frobenius and Galois structures. This combined with the above  gives
 \[
  T'\simeq {M'_\inf(S)}^{\phi_{M'}=1} \xrightarrow{ \ } \hat M'^{F_1=1}_1\simeq T(\mathscr G')
 \]
 where both source and target are finite free $\Z_p$-modules of the same rank as that of $T'$.
 By pulling back via all  $\ti x: \bar R\to \bar \O_E$ and using Lemma \ref{twoBTs}, we see that this map is an isomorphism. Therefore, we have
 \[
 T'\simeq T(\mathscr G')
 \]
 which shows (A1). 
For (A2), it is enough to show that the tensors $m'_b$ in $\calD_{\bar x}(\iota')^\otimes$ 
restrict via $\ti x: R\to \O_F$ to the corresponding tensors $s'_{b, \scriptscriptstyle{\rm D}, \ti x}$ in $\calD_{\rho(x)}(\iota')^\otimes$. This now follows from the above 
construction  and (\ref{combEq3}). \end{proof}

\medskip

  \section{Canonical integral models}\label{sShimura}

 \subsection{}
 We now consider  Shimura varieties and their arithmetic models. Under certain assumptions, we give a definition of a ``canonical'' integral model.
 
 \begin{para}\label{31}
Let $\eG$ be a connected reductive group over $\mathbb Q$ and $X$ a conjugacy class of maps 
 of algebraic groups over $\RR$ 
$$ h: \mathbb S = \Res_{\C/\RR} \GG_m \rightarrow \eG_{\RR},$$ 
such that $(\eG,X)$ is a Shimura datum (\cite{DeligneCorvallis} \S 2.1.)

For any $\C$-algebra $R,$ we have $R\otimes_{\RR}\C = R \times c^*(R)$ where $c$ 
denotes complex conjugation, and we denote  
by $\mu=\mu_h$ the cocharacter given on $R$-points by 
$
 R^\times \rightarrow (R \times c^*(R))^{\times} = (R\otimes_{\RR}\C)^\times = \mathbb S(R) \overset h \rightarrow \eG_{\C}(R).
 $

Let $\AA_f$ denote the finite adeles over $\Q,$ and $\AA^p_f \subset \AA_f$ the subgroup 
of adeles with trivial component at $p.$ Let $\eK = \eK_p\eK^p \subset \eG(\AA_f)$ 
where $\eK_p \subset \eG(\Q_p),$ and $\eK^p \subset \eG(\AA^p_f)$ are compact open subgroups.   

If $\eK^p$ is sufficiently small then the Shimura variety
$$
\Sh_{\eK}(\eG,X)_{\C} = \eG(\Q) \backslash X \times \eG(\AA_f)/\eK 
$$
has a natural structure of an algebraic variety over $\mathbb C$.
This has a canonical model  $\Sh_{\eK}(\eG,X)$ over the reflex field; a number field ${\sf E} = {\sf E}(\eG,X)$ which is the minimal field 
of definition of the conjugacy class of $\mu_h.$ (See, for example, \cite{MilneClay}.)
We will always assume in the following that $\eK^p$ is sufficiently small; in particular, the quotient above exists as an algebraic variety. We will also assume that the center $Z(G)$ has the
same $\Q$-split rank  as   $\R$-split rank. (This condition is automatic for Shimura varieties of Hodge type.)

Now choose a place $v$ of $\eE$ over $p$, given by an embedding $\bar\Q\to \bar\Q_p$.
 We denote by $E={\sf E}_v/\Q_p$ the local reflex field and by $\{\mu\}$ the 
 $G(\bar\Q_p)$-conjugacy class -which is defined over $E$- of the minuscule cocharacter $\mu_h$. 
We denote by $\O_{\eE, (v)}$ the localization of the ring of integers $\O_{\eE}$ at $v$.  

Let $\Gg$ be a parahoric group scheme over $\Z_p$ with generic fiber 
$G_{\Q_p}$ and take $\eK_p=\Gg(\Z_p)\subset G(\Q_p)$.

We consider the system 
of covers $\Sh_{\eK'}(G, X)\to \Sh_{\eK}(G, X)$ where $\eK'=\eK'_p\eK^p\subset \eK=\eK_p\eK^p$, with $\eK'_p$ running over all compact open subgroups of $\eK_p=\calG(\Z_p)$. Using the condition on the center of $G$, we see that this gives a pro-\'etale $\Gg(\Z_p)$-cover ${\L}_\eK$ over $\Sh_{\eK}(G, X)$ (eg. see \cite[III]{MilneAA}, \cite[Thm 5.2.6]{MilneClay}).

\end{para}

\begin{para}\label{612}

Assume $p>2$, $(G, \{\mu\})$ is of local Hodge type and $\Mloc=\Mloc(\Gg, \{\mu\})$ is a local model as in \cite[Conjecture 21.4.1]{Schber}
(see the discussion in paragraph \ref{LMtheory}).  Assume also that the pair $(\Gg, \Mloc)$ is of \emph{strongly integral} local Hodge type.

 Suppose that for all sufficiently small $\eK^p$ we have  $\OEv$-models $\SS_{\eK}=\SS_{\eK_p\eK^p}$ (schemes of finite type, separated, and flat over $\OEv$) of the Shimura variety $\Sh_{\eK}(G, X)$ which are normal. We consider the conditions:
 
 \begin{itemize}
 
 \item[(1)] For $\eK'^p\subset \eK^p$, there are finite \'etale morphisms 
 \[
 \pi_{\eK'_p, \eK_p}: \SS_{\eK_p\eK'^p}\to \SS_{\eK_p\eK'^p}
 \]
  which extend the natural $\Sh_{\eK_p\eK'^p}(G, X)\to \Sh_{\eK_p\eK^p}(G, X)$.

 \item[(2)] The scheme $\SS_{\eK_p}=\varprojlim_{\eK^p}\SS_{\eK_p\eK^p}$ satisfies the ``extension property" for dvrs of mixed characteristic $(0, p)$:
 \[
 \SS_{\eK_p}(R[1/p])=\SS_{\eK_p}(R),
 \]
 for any such dvr $R$.
 
 \item[(3)] The $p$-adic formal schemes $\widehat \SS_{\eK}=\varprojlim_n \SS_\eK\otimes_{\O_{\eE, (v)}}\O_{\eE, (v)}/(p)^n$ support    locally universal {$(\Gg, \Mloc )$-displays} 
$
 \Dd_\eK
$
 which are associated with $\Lr_{\eK}$. We  ask that these are compatible for varying $\eK^p$, i.e. that there are  compatible isomorphisms
 \[
 \pi_{\eK'_p, \eK_p}^*\Dd_{\eK }\simeq \Dd_{\eK'}.
 \]
  \end{itemize}

 Instead of (3) we can also consider the condition:
 
 \begin{itemize}
 \item[(3*)]  The schemes $\SS_{\eK}$ support locally universal associated systems
 \[
 \widehat\Dd_\eK:=({\L}_\eK, \{\calD_{\bar x}\}_{\bar x\in \SS_\eK(k)}),
 \]
 where $\calD_{\bar x}$ are Dieudonn\'e $(\Gg, \Mloc)$-displays.  
  \end{itemize}

Note that (3) implies (3*);  this follows from the definitions.
 
 Theorem \ref{charThmA} below makes the following definition reasonable.

\begin{Definition}\label{canonical613}
A projective system of $\OEv$-models $\SS_{\eK} $ of the Shimura varieties $\Sh_{\eK}(G, X)$,  for $\eK=\eK_p\eK^p$ with $\eK_p$ fixed as above, is \emph{canonical}, if the models are  normal and satisfy the conditions (1), (2), (3) above.
\end{Definition}

 We conjecture that, under the hypotheses above, such canonical models always exist.
  In the next section, we show this for Shimura varieties of Hodge type  at tamely ramified primes.
 
 \begin{Remark}
 {\rm We could also consider a notion of a ``$\calM$-canonical model", where $(\Gg, \calM)$ is a more general integral local 
 Shimura pair, i.e. with $\calM$ not $\Mloc$. However, it is not clear if such added generality is very useful here.
  }
  \end{Remark}
   
  \begin{Remark}\label{LMD}
  {\rm Property (3) implies the existence of a \emph{local model diagram}:
  \begin{equation}\label{LMD}
\begin{gathered}
   \xymatrix{
	     &\widetilde{\widehat\SS_{\eK}}  \ar[dl]_-{\text{$\pi_\eK$}} \ar[dr]^-{\text{$q_\eK$}}\\
	   {\widehat\SS}_{\eK}   & & \ \Mloc \ ,
	}
\end{gathered}
\end{equation}
where $\pi_\eK$ is a $\Gg$-torsor and $q_\eK$ is $\Gg$-equivariant and smooth.

Indeed, let $\calD_\eK=(\calP_\eK, q_\eK, \Psi_\eK)$ be the ``universal" $(\Gg, \Mloc)$-display over $\widehat\SS_\eK$
as in (3). We set
\[
\widetilde{\widehat\SS_{\eK}}:=\calP_\eK\otimes_{W(\O_{\widehat\SS_{\eK}})}\O_{\widehat\SS_{\eK}}.
\]
This is a $\Gg$-torsor over $\widehat\SS_{\eK}$ and gives $\pi_\eK$. The morphism $q_\eK$ is obtained directly from the 
display datum. The smoothness of $q_\eK$ follows from the local universality condition by Proposition \ref{locally universalProp2}.

  }
  \end{Remark}

 \quash{Note that, in our definition of canonical, we first make a choice for the $\O_E$-scheme $\M$. However, it is reasonable 
 to ask, beforehand, for a canonical choice of $\M$ which depends only on $\Gg$ and $\{\mu\}$ and use this in the definition. This canonical choice should be the ``local model''. At tame primes of parahoric reduction the local models $\Mloc$ of \cite{PZ}, (slightly modified, as in \cite{HPR}), are uniquely determined by $\Gg$ and $\{\mu\}$
 \cite[Theorem 2.7, Remark 2.9]{HPR}; we use this choice in the next section.
 
 In the general case when $\Gg$ is parahoric, Scholze conjectures \cite[Conjecture 21.4.1]{Schber} that there should always be a canonical choice  
 $\M={\mathbb M}^{\rm loc}_{(\Gg, \mu)}$. He characterizes ${\mathbb M}^{\rm loc}_{(\Gg, \mu)}$ uniquely in terms of $(\Gg, \{\mu\})$ by giving
 its associated v-sheaf. We can then regard the integral models $\SS_{\eK}$ to be canonical only when they satisfy our conditions for this particular choice of $\M$. In any case, as it is shown in \cite{HPR}, $\Mloc={\mathbb M}^{\rm loc}_{(\Gg, \mu)}$ in all the cases we consider in \S\ref{sHodge}. In fact, assuming also the truth of the Conjecture in Remark \ref{conjEmb}, we can make sense of $(\Gg, \Mloc)$-displays 
 and give a notion of a canonical integral model more generally (i.e. without any tameness or local Hodge type hypothesis).
 }

   \begin{thm}\label{charThmA} Fix $\eK_p=\Gg(\Z_p)$ as above. Suppose that $\SS_\eK$, $\SS'_{\eK}$ are   $\OEv$-models of the Shimura variety ${\rm Sh}_{\eK}(G, X)$ for $\eK=\eK_p\eK^p$ that satisfy (1), (2) and (3*). Then there
 are isomorphisms $\SS_\eK\simeq \SS'_{\eK}$
giving the identity on the generic fibers and which are  compatible with the data in (1) and (3*). 
\end{thm}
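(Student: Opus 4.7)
The plan is to construct, for each $\eK=\eK_p\eK^p$, an intermediate scheme $\Y_\eK$ and to show that the two projections from $\Y_\eK$ to $\SS_\eK$ and $\SS'_\eK$ are isomorphisms, compatibly in $\eK^p$ and with the data in (3*). Define $\Y_\eK$ to be the normalization of the Zariski closure of the diagonal embedding $\Sh_\eK(G,X)\hookrightarrow \SS_\eK\times_{\OEv}\SS'_\eK$, with projections $\pi_\eK\colon \Y_\eK\to\SS_\eK$ and $\pi'_\eK\colon \Y_\eK\to\SS'_\eK$; both are birational and restrict to the identity on the common generic fiber. By functoriality of the Zariski closure and of the normalization under the finite \'etale transitions of (1), the family $\{\Y_\eK\}$ is a compatible system in $\eK^p$ and $\pi_\eK$, $\pi'_\eK$ respect this.

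The key local input is Proposition~\ref{MainProp} applied to $(\SS_\eK,\SS'_\eK)$ equipped with the versal associated systems from (3*); these agree on the common generic fiber, since $\Lr_\eK=\Lr'_\eK$ there by construction. Part~(b) yields, at each $\bar y\in \Y_\eK(k)$ with $\bar x=\pi_\eK(\bar y)$ and $\bar x'=\pi'_\eK(\bar y)$, an isomorphism
\[
\pi_\eK^*\colon \hat\O_{\breve\SS_\eK,\bar x}\xrightarrow{\ \sim\ }\hat\O_{\breve\Y_\eK,\bar y},
\]
and similarly for $\pi'_\eK$. Thus $\pi_\eK$ is \'etale at every $\bar y\in \Y_\eK(k)$ with trivial residue extension, and injective on $\Y_\eK(k)$. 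Since the \'etale locus of $\pi_\eK$ is open and contains both the generic fiber and every $k$-point of the special fiber --- a dense subset by the Jacobson property, as $k=\bar{\mathbb F}_p$ --- it equals all of $\Y_\eK$. Combined with being birational and injective on geometric points, $\pi_\eK$ is then an open immersion onto an open $U\subset \SS_\eK$ containing $\Sh_\eK(G,X)$ and $\pi_\eK(\Y_\eK(k))$; the same conclusions hold for $\pi'_\eK$.

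The remaining step, which I expect to be the main obstacle, is to prove that $\pi_\eK$ is surjective on $k$-points; this is where the extension property (2) enters. Given $\bar x\in \SS_\eK(k)$, choose a lift $\bar x_p\in \SS_{\eK_p}(k)$ through the pro-finite \'etale cover $\SS_{\eK_p}\to \SS_\eK$, possible since $k=\bar{\mathbb F}_p$. Lift $\bar x$ to a map $\ti x\colon \Spec R\to \SS_\eK$ for a complete dvr $R$ of mixed characteristic $(0,p)$ with residue field $k$, obtained from a suitable one-dimensional normal quotient of $\hat\O_{\breve\SS_\eK,\bar x}$ not containing $p$ (using flatness of $\SS_\eK$ over $\OEv$). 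Since $\Spec R$ is strictly henselian, $\ti x$ lifts uniquely to $\ti x_p\colon \Spec R\to\SS_{\eK_p}$ specializing to $\bar x_p$; whence $\ti x_p[1/p]\in \Sh_{\eK_p}(R[1/p])=\SS'_{\eK_p}(R[1/p])$, and the extension property (2) for $\SS'_{\eK_p}$ extends this to $\ti x'_p\in \SS'_{\eK_p}(R)$. Projecting to level $\eK$ gives $\ti x'\in \SS'_\eK(R)$ whose generic fiber equals that of $\ti x$ in $\Sh_\eK(R[1/p])$, so $(\ti x,\ti x')$ sends $\Spec R$ into the Zariski closure of the diagonal in $\SS_\eK\times_{\OEv}\SS'_\eK$; its special point $(\bar x,\bar x')$ lifts to some $\bar y\in \Y_\eK(k)$ (as the finite normalization map is surjective on closed points) with $\pi_\eK(\bar y)=\bar x$. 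Hence $U=\SS_\eK$ and $\pi_\eK$ is an isomorphism; symmetrically $\pi'_\eK$ is an isomorphism, and $\pi'_\eK\circ\pi_\eK^{-1}\colon \SS_\eK\xrightarrow{\sim}\SS'_\eK$ is the desired isomorphism. Compatibility with (1) follows from the functoriality of $\Y_\eK$ in $\eK^p$, and compatibility with the associated systems in (3*) from Proposition~\ref{MainProp}(a).
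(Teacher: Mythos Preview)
Your argument is essentially correct and starts from the same construction as the paper (the normalized diagonal, your $\Y_\eK$ is the paper's $\SS''_\eK$; Proposition~\ref{MainProp} for the local comparison; and the extension property~(2) for the global step), but the execution differs. The paper only extracts quasi-finiteness of $\pi_\eK$ from Proposition~\ref{MainProp}, then proves properness via Nagata compactification (Proposition~\ref{Proper}) together with the extension property, and concludes by Zariski's Main Theorem. You instead extract the stronger conclusion that $\pi_\eK$ is \'etale, argue that it is an open immersion, and establish surjectivity on $k$-points directly from~(2); this avoids Nagata compactification and is a pleasant simplification.

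One step needs tightening: the claim that $\pi_\eK$ is ``injective on $\Y_\eK(k)$'' does not follow from Proposition~\ref{MainProp} alone --- for each $\bar y$ over a fixed $\bar x$ one obtains an isomorphism on completions, but this does not by itself rule out several such $\bar y$. The clean route to ``open immersion'' is: once $\pi_\eK$ is \'etale and separated, the diagonal $\Delta\colon\Y_\eK\to\Y_\eK\times_{\SS_\eK}\Y_\eK$ is both an open and a closed immersion; its complement $Z$ is therefore open and closed, hence \'etale over $\Y_\eK$ and flat over $\OEv$, and has empty generic fiber since $\pi_\eK[1/p]$ is an isomorphism, so $Z=\emptyset$. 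Thus $\pi_\eK$ is an \'etale monomorphism, i.e.\ an open immersion. With this fix your surjectivity argument via~(2) goes through; it is in effect the content of the paper's Proposition~\ref{Proper}, unwound without the Nagata machinery.
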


Since condition (3) implies (3*), this immediately gives:
 
 \begin{cor}\label{charThm}
 Fix $\eK_p=\Gg(\Z_p)$ as above. Suppose that $\SS_\eK$, $\SS'_{\eK}$ are canonical $\OEv$-models of the Shimura variety ${\rm Sh}_{\eK}(G, X)$ for $\eK=\eK_p\eK^p$. Then there are isomorphisms $\SS_\eK\simeq \SS'_{\eK}$
giving the identity on the generic fibers and which are  compatible with the data in (1). \endproof
 \end{cor}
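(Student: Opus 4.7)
The plan is to form, for each sufficiently small $\eK^p$ with $\eK=\eK_p\eK^p$, the normalization $\Y_\eK$ of the Zariski closure of the diagonal embedding of $\Sh_\eK(G,X)$ in $\SS_\eK\times_{\OEv}\SS'_\eK$, and to show that the two projections $\pi_\eK:\Y_\eK\to\SS_\eK$ and $\pi'_\eK:\Y_\eK\to\SS'_\eK$ are isomorphisms. Condition (3*) provides versal associated systems on $\SS_\eK$ and $\SS'_\eK$ whose underlying local systems on the common generic fiber are both $\Lr_\eK$, so Proposition~\ref{MainProp} applies after base change to $\O_{\breve E}$: for every $\bar y\in\Y_\eK(k)$ with images $\bar x=\pi_\eK(\bar y)$ and $\bar x'=\pi'_\eK(\bar y)$, the two projections induce isomorphisms $\hat\O_{\breve\SS_\eK,\bar x}\xrightarrow{\sim}\hat\O_{\breve\Y_\eK,\bar y}\xleftarrow{\sim}\hat\O_{\breve\SS'_\eK,\bar x'}$ of strict completions.

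The main obstacle is to prove surjectivity of $\pi_\eK$ on $k$-points after base change to $\O_{\breve E}$, and this is where the extension property (2) enters. Given $\bar x\in\SS_\eK(k)$, I would first lift $\bar x$ to $\ti x\in\SS_\eK(\O_F)$ for some finite extension $F/\breve E$ by a standard specialization argument on the normal flat finite-type $\OEv$-scheme $\SS_\eK$. Using the pro-\'etale $\Gg(\Z_p)$-cover and, if necessary, replacing $\O_F$ by a larger valuation ring $V$ of mixed characteristic $(0,p)$, one lifts the generic fiber of $\ti x$ to a point of $\Sh_{\eK_p}(V[1/p])$. Condition (2), together with a filtered-colimit argument extending it from DVRs to general mixed-characteristic valuation rings, then produces a $V$-point of $\SS'_{\eK_p}$, whose image in $\SS'_\eK$ is some $\ti x'\in\SS'_\eK(V)$ with reduction $\bar x'\in\SS'_\eK(k)$. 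Since the generic fibers of $\ti x$ and $\ti x'$ agree under the diagonal identification, the pair $(\ti x,\ti x')$ factors through the Zariski closure of the diagonal (by closedness) and then uniquely through its normalization $\Y_\eK$ (by the universal property applied to the normal $\Spec V$); the reduction of the resulting $V$-point of $\Y_\eK$ is the required $\bar y\in\Y_\eK(k)$ above $\bar x$.

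With $k$-point surjectivity in hand, the isomorphisms on completions from Proposition~\ref{MainProp} force $\pi_\eK\otimes\O_{\breve E}$ to be formally \'etale, hence \'etale by finite presentation, at every closed point of $\Y_\eK\otimes\O_{\breve E}$; since every point of this finite-type scheme specializes to a closed one and \'etaleness is open, $\pi_\eK\otimes\O_{\breve E}$ is \'etale on the whole scheme. Moreover $\pi_\eK$ is birational: $\Y_\eK$ and $\SS_\eK$ are integral, flat of finite type over $\OEv$, and $\pi_\eK$ restricts to the identity on the common generic fiber $\Sh_\eK(G,X)$ by construction of $\Y_\eK$. An \'etale, birational, surjective morphism between integral Noetherian schemes with normal target is an isomorphism (strict-Henselian-locally it is the identity on a normal Henselian local domain with the same fraction field, and surjectivity then promotes this to a global isomorphism), and this descends from $\O_{\breve E}$ to $\OEv$ by faithful flatness. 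The symmetric argument shows $\pi'_\eK$ is an isomorphism, so $\pi'_\eK\circ\pi_\eK^{-1}:\SS_\eK\xrightarrow{\sim}\SS'_\eK$ is the desired isomorphism.

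Compatibility with the transition maps in (1) holds because the constructed isomorphism is the identity on the common generic fiber $\Sh_\eK(G,X)$, and the transition maps in (1) extend the natural \'etale covers on generic fibers; normality and separatedness of all schemes involved force agreement on the dense open $\SS_\eK[1/p]$ to propagate globally. Compatibility with (3*) then follows from the uniqueness in Proposition~\ref{depL}, which determines each Dieudonn\'e $(\Gg,\M)$-display $\calD_{\bar x}$ from $\Lr_\eK$ uniquely up to canonical isomorphism.
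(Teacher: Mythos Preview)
Your argument is correct and arrives at the same conclusion, but the endgame differs from the paper's. Both form the normalization of the diagonal closure (the paper calls it $\SS''_\eK$) and invoke Proposition~\ref{MainProp} for isomorphisms on strict completions. From this the paper extracts only quasi-finiteness of the projections; it then proves \emph{properness} (Proposition~\ref{Proper}) via Nagata compactification, using the extension property~(2) for the tower $\SS''_{\eK_p}$ to rule out any boundary point of the compactification, and finishes with Zariski's main theorem. Your route avoids Nagata entirely: you upgrade the completion isomorphisms to \'etaleness, use the extension property directly to produce a preimage of each $\bar x\in\SS_\eK(k)$ in $\Y_\eK(k)$, and conclude by the fact that an \'etale birational surjection onto a normal integral scheme is an isomorphism. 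This is a genuine simplification.

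Two minor clean-ups are worth making. First, since $F/\breve E$ is finite, $\O_F$ is already a strictly henselian discrete valuation ring, so $\ti x\in\SS_\eK(\O_F)$ lifts to $\SS_{\eK_p}(\O_F)$ through the pro-finite-\'etale tower with no need to pass to a larger valuation ring, and condition~(2) applies verbatim---the filtered-colimit extension to general valuation rings is unnecessary. Second, the lift of $(\ti x,\ti x')$ from the diagonal closure to its normalization $\Y_\eK$ is better justified by the valuative criterion for the finite normalization map (the $F$-point already lands in $\Sh_\eK(G,X)\subset\Y_\eK[1/p]$, and finiteness gives properness), rather than by ``the universal property of normalization'', which would require the morphism from $\Spec\O_F$ to be dominant.
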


 \begin{proof} Let us denote by $\SS''_{\eK}$ the normalization of the Zariski closure of the diagonal embedding of ${\rm Sh}_{\eK}(G, X)$ in $\SS_{\eK}\times_{\OEv}\SS'_{\eK}$. This is  a third $\OEv$-model of the Shimura variety ${\rm Sh}_{\eK}(G, X)$ which is also normal. We can easily see that $\SS''_{\eK}$, for varying $\eK^p$, come equipped with data as in (1) and that (2) is satisfied. Denote by
 \[
 \pi_{\eK}: \SS''_{\eK}\to \SS_{\eK}, \quad \pi'_{\eK}: \SS''_{\eK}\to \SS'_{\eK}
 \]
 the morphisms induced by the projections. Both of these morphisms are the identity on the generic fiber and so they are birational. Using condition (3*), we see that by Proposition \ref{MainProp}, $\pi_{\eK}$ and $\pi'_{\eK}$ give isomorphisms between the completions of the strict Henselizations at geometric closed points of the special fiber. It follows that  the fibers of $\pi_{\eK}$ and of $\pi'_{\eK}$ over all such points are zero-dimensional. Hence, $\pi_{\eK}$ and  $\pi'_{\eK}$
 are quasi-finite. The desired result now quickly follow from this, Zariski's main theorem and the following:

 \begin{prop}\label{Proper}
 The morphisms $\pi_{\eK}$ and $\pi_{\eK}'$ are proper.
\end{prop}

\begin{proof} It is enough to prove that $\pi_{\eK}$ is proper, the properness of $\pi_{\eK}'$ then given by symmetry. We can also base change to the completion $\O_{\breve E}$ of the maximal unramified extension of $\OEv$; for simplicity we will omit this base change from the notation. We apply the Nagata compactification theorem (\cite[Thm. 4.1]{Conrad}) to $\pi_{\eK}$.
This provides a proper morphism $\bar\pi_{\eK}: \calT\to \SS_{\eK}$
and an open immersion $j: \SS''_{\eK}\hookrightarrow \calT$ with $\pi_{\eK}=\bar\pi_{\eK}\cdot j$. By replacing $\calT$ by the scheme theoretic closure of $j$, we can assume that $j(\SS''_{\eK})$ is dense in $\calT$. Since $\pi_{\eK}[1/p]$ is an isomorphism and hence proper, $j[1/p]$ is also proper. Hence, $j[1/p]$ is an isomorphism as a proper open immersion with dense image.  Since $\calT$ is the closure of its generic fiber by construction, it follows that $\calT$ is flat over $\O_E$ and that
the ``boundary", $\calT-j(\SS''_{\eK})$, if non-empty, is supported on the special fiber
of $\calT\to \Spec(\O_E)$.

If  $\calT-j(\SS''_{\eK})\neq \emptyset$, there is a $k$-valued point $\bar t$ of $\calT-j(\SS''_{\eK})$. By flatness,
$\bar t$ lifts to $\tilde t\in \calT(\O_F)$, for some finite extension $F/\breve E$. Set $t=x$ for the corresponding $F$-valued point of the Shimura variety $\calT[1/p]=\SS_{\eK}[1/p]=\SS''_{\eK}[1/p]$. This extends to $\tilde x:=\bar\pi_{\eK}(\tilde t) \in \SS_{\eK}(\O_F)$. Since $\O_F$ is strictly henselian, the point $\tilde x$ lifts to a point
\[
\ti z\in \SS_{\eK_p}(\O_F)=\varprojlim\nolimits_{\eK^p}\SS_{\eK_p\eK^p}(\O_F).
\]
 By the dvr extension property for $\SS''_{\eK_p}$, this also gives a point $\ti z''
 \in \SS''_{\eK_p}(\O_F)$. This maps to a point $\tilde x''\in \SS''_{\eK}(\O_F)$ which agrees
 with $x\in \SS_{\eK}(F)$ on the generic fiber. Since $\bar\pi_{\eK}: \calT\to \SS_{\eK}$ is separated, this implies that $\bar t$ lies on $j(\SS''_{\eK})$, which is a contradiction. We conclude that $j$ is an isomorphism and so $\pi_{\eK}$ is proper. 
  \end{proof}
\end{proof}

 \end{para}

\section{Shimura varieties at tame parahoric primes}\label{sHodge}

\subsection{}
We now concentrate our attention to Shimura varieties of Hodge type at tame primes where the level is parahoric (\cite{KP}).

\begin{para}
Fix a $\Q$-vector space $V$ with a perfect alternating pairing $\psi.$ 
For any $\Q$-algebra $R,$ we write $V_R = V\otimes_{\Q}R.$ 
Let $\GSp = \GSp(V,\psi)$ be the corresponding group of symplectic similitudes, and let $S^{\pm}$ be the Siegel 
double space, defined as the set of maps $h: \mathbb S \rightarrow \GSp_{\RR}$ such that 
\begin{enumerate}
\item The $\C^\times$-action on $V_{\RR}$ gives rise to a Hodge structure
\[
V_{\C} \iso V^{-1,0} \oplus V^{0,-1}
\]  of type $(-1,0), (0,-1)$. 
\item $(x,y) \mapsto \psi(x, h(i)y)$ is (positive or negative) definite on $V_{\RR}.$
\end{enumerate}
\smallskip
\end{para}

\begin{para}\label{assumptions}
 
 Let $(G, X)$ be a Shimura datum and $\eK=\eK_p\eK^p\subset G(\AA_f)$ with $\eK_p\subset G(\Q_p)$ and $\eK^p\subset G(\AA_f^p)$ as above, where $p$ is an odd prime. We assume:
 
 \begin{itemize}
 \item[(1)] $(G, X)$ is of Hodge type: There is a symplectic faithful representation 
 $\rho: G\hookrightarrow {\rm {GSp}}(V, \psi)$ inducing an embedding of Shimura data
 \[
 (G, X)\hookrightarrow ({\rm {GSp}}(V, \psi), S^{\pm}).
 \]
 
 \item[(2)] $G$ splits over a tamely ramified extension of $\Q_p$.
  
 \item[(3)] $\eK_p=\calG(\Z_p)$ is a parahoric stabilizer, so $\calG$ is the Bruhat-Tits stabilizer group scheme $\calG_x$ of a point $x$ in the extended Bruhat-Tits building of $G(\Q_p)$ and $\calG$ is connected, \emph{i.e.} we have
 $\calG=\calG_x=\calG_x^\circ$.
 
 \item[(4)]   $p\nmid \pi_1(G_{\rm der}(\bar\Q_p))$. 
 
 \end{itemize}

 We now fix a place $v$ of the reflex field ${\sf E}$ over $p$ and let $E={\sf E}_v/\Q_p$ and $\{\mu\}$ be as in \S\ref{sShimura} above.
  Associated with $(\Gg, \{\mu\})$ and $x$, we have the local model 
 \[
 \Mloc=\Mloc(\Gg, \{\mu\})=\Mloc_{x}(G, \{\mu\}).
 \]
  (Under the current assumptions, we can appeal to \cite{PZ} for the construction of $\Mloc$.
This satisfies Scholze's characterization by \cite[Theorem 2.15]{HPR}.)

 In  \cite[2.3.1, 2.3.15, 2.3.16]{KP}, it is shown that under the assumptions (1)-(4) above, there is a (possibly different) Hodge embedding 
 \[
 \iota: (G, X)\hookrightarrow ({\rm {GSp}}(V, \psi), S^\pm)
 \] and a $\Z_p$-lattice $\Lambda\subset V_{\Q_p}$ 
 such that $\Lambda\subset \Lambda^\vee$ and
 
 \begin{itemize}
 \item[a)]  There is a group scheme homomorphism which is a closed immersion
 \[
 \iota: \calG\hookrightarrow {\rm GL}(\Lambda ),
 \]
 such that $\iota(\Gg)$ contains the scalars $\Gm$,  and which extends 
\[
  G_{\Q_p}\hookrightarrow {\rm {GSp}}(V_{\Q_p}, \psi_{\Q_p})\subset {\rm {GL}}(V_{\Q_p}).
\] 

\item[b)]  There is a corresponding equivariant closed immersion
 \[
 \iota_*: \Mloc\hookrightarrow  {\rm {Gr}}(g, \Lambda)_{\O_E}.
 \] 

 \item[](So $\iota$ is a strongly integral local Hodge embedding for $(\Gg, \Mloc)$.)
  \end{itemize}
 
 Here, $\dim_{\Q_p}(V)=2g$ and ${\rm {Gr}}(g, \Lambda)$ is the Grassmannian over $\Z_p$.    
 \end{para}
 
 \begin{para}
  Let $V_{\Z_{(p)}} = \Lambda \cap V,$ and fix a $\Z$-lattice $V_{\Z} \subset V$ such that $V_{\Z}\otimes_{\Z}\Z_{(p)} = V_{\Z_{(p)}}$ 
and $V_{\Z} \subset V_{\Z}^\vee.$  
Consider the Zariski closure $G_{\Z_{(p)}}$ of $\eG$ in $\GL(V_{\Z_{(p)}})$; then $G_{\Z_{(p)}}\otimes_{\Z_{(p)}}\Z_p\cong \Gg$. Fix a finite set of  tensors $(s_{a}) \subset V_{\Z_{(p)}}^\otimes$ 
whose stabilizer is $G_{\Z_{(p)}}.$ Such  a set exists by \cite[Lemma 1.3.2]{KisinJAMS} and \cite{DeligneLetter}.

Set $\eK_p = \calG(\Z_p),$ and $\eK^\flat_p = \mathrm{GSp}(V_{\Q_p})\cap \GL(\Lambda).$ We set $\eK = \eK_p\eK^p$ and similarly for $\eK^\flat.$ 
By  \cite[Lemma 2.1.2]{KisinJAMS}, for any compact open subgroup $\eK^p \subset G(\AA^p_f)$ there exists 
$\eK^{\flat p} \subset \GSp(\AA^p_f)$ such that $\iota$ induces an embedding over $\sf E$
$$
\Sh_{\eK}(\eG,X) \hookrightarrow \Sh_{\eK^\flat}(\GSp(V, \psi), S^{\pm})\otimes_\Q{\sf E}.
$$

The choice of lattice $V_{\Z}$ gives an interpretation of the Shimura variety $\Sh_{\eK^\flat}(\GSp, S^{\pm})$ as a moduli scheme of 
polarized abelian varieties with $\eK^{\flat p}$-level structure, and hence an integral model $\calA_{\eK^\flat}=\SSh_{\eK^
\flat}(\GSp, S^{\pm})$ over $\Z_{(p)}$ 
 (see \cite{KisinJAMS}, \cite{KP}).  
 
We denote by $\SSh^-_{\eK}(\eG,X)$ the (reduced) closure of $\Sh_{\eK}(\eG,X)$ in the $\OEv$-scheme 
$\SSh_{\eK^\flat}(\GSp, S^{\pm})\otimes_{\Z_{(p)}}\OEv,$ and by $\SSh_{\eK}(\eG,X),$ the normalization of the closure $\SSh_{\eK}(\eG,X)^-.$
For simplicity, we set 
\[
\SSh_{\eK}:=\SSh_{\eK}(\eG,X)
\]
when there is no danger of confusion.  

\begin{thm}\label{exAssSystem}
 Assume that $p$ is odd and that the Shimura data $(G, X)$ and the level subgroup $\eK$ satisfy the assumptions (1)-(4) of \S \ref{assumptions}. Then, the $\OEv$-models $\SS_{\eK}(G, X)$ support locally universal associated systems 
 \[
 \Dd_\eK=({\L}_\eK, \{\calD_{\bar x}\}_{\bar x\in \SS_{\eK}(k)}),
 \]
  where $\calD_{\bar x}$ are Dieudonn\'e $(\Gg, \Mloc)$-displays.
\end{thm}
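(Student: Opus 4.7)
The plan is to apply Theorem \ref{Exists} to the pro-\'etale local system $\Lr_\eK$ on $\SS_\eK(G,X)$, and then verify versality using the main local structure result of \cite{KP}. First, by the construction of $\SS_\eK$ recalled above, there is a natural finite morphism $\SS_\eK\to\SSh_{\eK^\flat}(\GSp,S^\pm)\otimes_{\Z_{(p)}}\O_{{\sf E},(v)}$ through which we can pull back the universal abelian scheme. This provides an abelian scheme $A$ over $\SS_\eK$ with a polarization and $p$-divisible group $\sG=A[p^\infty]$. The symplectic embedding $\iota\colon \Gg\hookrightarrow\GL(\Lambda)$ together with the construction of $\Lr_\eK$ identifies $\Lr_\eK(\iota)^\vee$ with the $\Z_p$-local system $T_p(\sG)$ on $\SS_\eK[1/p]$ (this follows from the moduli description at $\eK^\flat$-level and the definition of $\Lr_\eK$ as the inverse limit over $\eK'_p\subset\eK_p$).

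With $\Lr_\eK(\iota)^\vee$ realized by the Tate module of the $p$-divisible group $\sG$ on the normal $\O_E$-scheme $\SS_\eK$, Theorem \ref{Exists} applies directly and furnishes, for every $\bar x\in\SS_\eK(k)$, a Dieudonn\'e $(\Gg,\Mloc)$-display $\calD_{\bar x}=(\calP_{\bar x},q_{\bar x},\Psi_{\bar x})$ over $\hat R_{\bar x}$ satisfying conditions (A1) and (A2) of Definition \ref{defass}. The uniqueness clause of Theorem \ref{Exists} (equivalently, Proposition \ref{depL}) guarantees that this system is well-defined up to unique isomorphism. By Proposition \ref{indassociated}, this does not depend on the choice of local Hodge embedding $\iota$, so the construction is intrinsic to $(\Gg,\Mloc)$ and $\Lr_\eK$.

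It remains to verify versality, i.e.\ that for every $\bar x\in\SS_\eK(k)$, there is a section $s$ of $\calP_{\bar x}$ which is rigid in the first order at $\frakm_{\bar x}$ and for which $q_{\bar x}\cdot s\colon \Spec(\hat R_{\bar x})\to\Mloc$ identifies $\hat R_{\bar x}$ with the completion of $\Mloc\otimes_{\O_E}\O_{\breve E}$ at the corresponding point. This is the main obstacle. For this, recall the principal construction of \cite{KP}: using the tensors $(s_a)\subset V_{\Z_{(p)}}^\otimes$ and the crystalline nature of the Hodge filtration on $\sG$, one produces an \'etale-local isomorphism between the strict completion $\hat R_{\bar x}$ and the corresponding completion of $\Mloc\otimes_{\O_E}\O_{\breve E}$, pinning down the singularities of $\SS_\eK$ to those of $\Mloc$. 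Concretely, the Dieudonn\'e display $\calD_{\bar x}(\iota)$ furnished by Theorem \ref{Exists} coincides, via \eqref{Dieudonne}, with the evaluation of the contravariant Dieudonn\'e crystal of $\sG$ on $\hat W(\hat R_{\bar x})$; and the tensors $\und s_a\in M_{\bar x}^\otimes$ produced in the proof of Theorem \ref{Exists} agree with the crystalline tensors constructed in \cite[\S3.2, \S3.4]{KP}. Consequently the $\Gg$-torsor $\calP_{\bar x}$ and the morphism $q_{\bar x}$ to $\Mloc$ arising from the filtration match the data used in \cite{KP}, and the isomorphism established there (\cite[Theorem 4.2.7]{KP} or its local analogue) is exactly the statement that $q_{\bar x}\cdot s$ is an isomorphism on completions for an appropriate rigid section $s$.

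Finally, for the global existence of $\Dd_\eK$ one applies Zink's Witt-vector descent (cf.\ \S\ref{global}) to glue the system of Dieudonn\'e displays constructed from $\sG$ via Lau's theorem into a $(\Gg,\Mloc)$-display over the formal scheme $\widehat\SS_\eK$; the compatibility for varying $\eK^p$ in condition (1) is immediate from the functoriality of the whole construction under the finite \'etale covers $\pi_{\eK'_p,\eK_p}$. This completes the argument and shows that the system $\widehat\Dd_\eK=(\Lr_\eK,\{\calD_{\bar x}\}_{\bar x\in\SS_\eK(k)})$ is a versal associated system in the sense of Definition \ref{versal}.
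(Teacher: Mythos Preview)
Your overall strategy---pull back the universal abelian scheme, identify $\Lr_\eK(\iota)^\vee$ with the Tate module of $A[p^\infty]$, apply Theorem~\ref{Exists}, then check versality via \cite{KP}---is the same as the paper's. Two points deserve comment.

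First, your versality argument rests on the claim that the tensors $\und s_a\in M_{\bar x}^\otimes$ produced in the proof of Theorem~\ref{Exists} coincide with the crystalline tensors of \cite[\S3.2, \S3.4]{KP}. This is plausible but you do not prove it, and it is not immediate: the constructions go through different routes (BKF modules and Galois descent versus the methods in \cite{KP}). The paper sidesteps this comparison entirely. Its argument works at the level of the induced $\GL$-display $\calD_{\bar x}(\iota)$, which \emph{by construction} is the Dieudonn\'e display of $A[p^\infty]$ over $\hat R_{\bar x}$---no tensor identification needed. One takes a section $s$ of $\calP_{\bar x}$ rigid in the first order; its image in $\calP_{\GL}$ is then also rigid, so by \cite[(3.1.12), (3.2.12)]{KP} the map $q\cdot s:\Spec(\hat R_{\bar x})\to{\rm Gr}(g,\Lambda)$ induces a surjection on cotangent spaces. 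The main result of \cite{KP} says the same for $\Spec(\hat R_{\bar x})\to \calA_{\eK^\flat}$, and combining these gives a surjection $\hat\O_{\breve{\rm M}^{\rm loc},\bar y}\twoheadrightarrow \hat R_{\bar x}$ between complete local normal domains of the same dimension, hence an isomorphism. Your citation of \cite[Theorem 4.2.7]{KP} is the global local model diagram, which is not quite the statement needed here.

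Second, your final paragraph on the ``global existence of $\Dd_\eK$'' does not belong in this proof. Theorem~\ref{exAssSystem} concerns only the associated \emph{system} $(\Lr_\eK,\{\calD_{\bar x}\})$; the global display $\Dd_\eK$ over $\widehat\SS_\eK$ is the content of Theorem~\ref{thmLast}. Moreover, the mechanism you sketch (gluing local Dieudonn\'e displays by Witt descent) is not how the paper proceeds: the global tensors $s_{a,{\rm univ}}$ are supplied by Hamacher--Kim \cite{HamaKim}, and the global $q_\eK$ comes from the local model diagram of \cite[Theorem 4.2.7]{KP}.
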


\begin{proof}  Recall the pro-\'etale $\Gg(\Z_p)$-cover ${\L}_\eK$ over $\Sh_{\eK}(G, X)$ given as in \S \ref{31} above.  Let $h: A \rightarrow  \SS_{\eK}$ denote the restriction of the universal abelian scheme via $
\SS_{\eK}\rightarrow \SSh_{\eK^\flat}(\GSp, S^{\pm}) 
$. Then the $\Z_p$-local system $\Lr_\eK(\iota)$ is isomorphic to the local system given by the
  Tate module of the $p$-divisible group $A[p^\infty]$ of the universal abelian scheme over $\SS_{\eK}$. The tensors
$s_a\in \Lambda^\otimes$ give corresponding global sections $s_{a, \et}$ of $\Lr_\eK(\iota)^\otimes$ over 
$\Sh_{\eK}(G, X)$.  Theorem \ref{Exists} implies that ${\L}_\eK$ extends to an associated system $({\L}_\eK, \{\calD_{\bar x}\}_{\bar x\in \SS_{\eK}(k)})$, where $\calD_{\bar x}$ are Dieudonn\'e $(\Gg, \Mloc)$-displays.
It remains to show:

\begin{prop}\label{locally universalProp}
For every $\bar x\in  \SS_{\eK}(k)$, the Dieudonn\'e $(\Gg, \Mloc)$-display $\calD_{\bar x}$ over 
$R=\hat\O_{\breve \SS_\eK, \bar x}$ is locally universal.
\end{prop}

\begin{proof} Set $\calD_{\bar x}=(\calP, q, \Psi)$.
Choose a section $s$ of $\calP$ over $\hat W(R)$ which is rigid in the first order at $\frakm_R$.
Then the corresponding section $\Spec(\hat W(R))\to \calP\subset \calP_{\GL}$ is rigid in the first order for 
the $\GL$-display $\calD_{\bar x}(\iota)=(\P_{\GL}, q_{\GL}, \Psi_{\GL})$ induced by $\iota$ and $\calD_{\bar x}$. 
We have a morphism 
\[
q\cdot (s\otimes 1): \Spec(R)\to \Mloc\subset {\rm Gr}(g,\Lambda)_{\O_{\breve E}}.
\]
We also have the morphism
\[
i: \Spec(R)=\Spec(\hat\O_{\breve \SS_\eK, \bar x})\to  \calA_{\eK^\flat}\otimes_{\Z_p}\O_{\breve E}.
\]
induced by the Hodge embedding.  By \cite[Prop. 4.2.2]{KP} and its proof, $i$ is a closed immersion. 
Since $\calD_{\bar x}=(\calP, q, \Psi)$ is associated with $\Lr_\eK$ (for $\iota$), the $p$-divisible
group that corresponds to $\calD_{\bar x}(\iota)$ is the $p$-divisible group obtained by pulling back the (versal)
$p$-divisible group of the universal abelian scheme via $i$. By Proposition \ref{Psiconstant} we obtain
that the morphism $\Spec(R)\to \Mloc\subset {\rm Gr}(g,\Lambda)_{\O_E}$ 
induces a surjection on cotangent spaces.
 It follows that $\Spec(R)\to \Mloc$ also induces
a surjection 
\[
\hat\O_{{\breve {\rm M}}^{\rm loc}, \bar y}\to R=\hat\O_{\breve\SS_\eK, \bar x}
\]
where $\bar y=(q\cdot (s\otimes 1))(\bar x)$.
 This surjection between
complete local normal rings of the same dimension has to be an isomorphism. This completes the proof.
\end{proof}
\end{proof}

\end{para}

 By combining Theorems \ref{exAssSystem} and \ref{charThm} we now obtain:

\begin{thm}\label{indThm}
  Assume that $p$ is odd and that the Shimura data $(G, X)$ and the level subgroup $\eK$ satisfy the assumptions (1)-(4) of \S \ref{assumptions}. Suppose $v$ is a place of $\eE$ over $p$. Then the $\O_{\eE, (v)}$-scheme  $\SSh_{\eK}(G, X)$ of \cite{KP} is independent of the choices of Hodge embedding  $\rho: (G, X)\hookrightarrow ({\rm {GSp}}(V, \psi), S^\pm)$, lattice $V_{\Z}\subset V $ and tensors $(s_a)$, used in its construction. \hfill $\square$
\end{thm}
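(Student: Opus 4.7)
The plan is to deduce Theorem \ref{indThm} by combining Theorem \ref{exAssSystem} with the characterization Theorem \ref{charThmA}. Given two sets of auxiliary choices $(\rho, V_\Z, (s_a))$ and $(\rho', V'_\Z, (s'_b))$ producing integral models $\SSh_\eK$ and $\SSh'_\eK$ respectively, my aim is to verify that both families satisfy the hypotheses (1), (2), (3*) of \S\ref{612}, so that Theorem \ref{charThmA} yields the desired isomorphism $\SSh_\eK \simeq \SSh'_\eK$ extending the identity on the generic fiber and compatible with the transition maps in (1). Both models are normal $\OEv$-schemes of finite type with common generic fiber $\Sh_\eK(G,X)$ by the very definition of the construction in \cite{KP}, so only the conditions (1), (2), (3*) remain to be checked.

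Condition (1), the existence of finite \'etale transition morphisms $\pi_{\eK'_p,\eK_p}$ for varying $\eK^p$, is built into the construction in \cite{KP}: the analogous maps exist between Siegel integral models by the moduli interpretation, they restrict to maps of the Zariski closures, and they lift to the normalizations. Condition (2), the dvr extension property for $\SSh_{\eK_p} = \varprojlim_{\eK^p} \SSh_{\eK_p\eK^p}$, is also shown in \cite{KP}; it is inherited from the corresponding property for the Siegel model, which follows from the N\'eron--Ogg--Shafarevich criterion applied to the universal abelian scheme together with the Vasiu--Zink purity theorem for abelian schemes. For condition (3*), Theorem \ref{exAssSystem} directly supplies a versal associated system $(\Lr_\eK, \{\calD_{\bar x}\}_{\bar x \in \SSh_\eK(k)})$ on each of $\SSh_\eK$ and $\SSh'_\eK$. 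The pro-\'etale $\Gg(\Z_p)$-local system $\Lr_\eK$ is intrinsic to the Shimura datum and the level, hence compatible under $\pi_{\eK'_p,\eK_p}$ by definition; and the Dieudonn\'e $(\Gg,\Mloc)$-displays $\calD_{\bar x}$ are uniquely determined up to isomorphism by $\Lr_\eK$ via Proposition \ref{depL}, so they transform compatibly under pull-back by the \'etale morphisms of (1).

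The one point that needs care is that the two models carry associated systems defined via \emph{different} local Hodge embeddings $\iota$ and $\iota'$; however, Proposition \ref{indassociated} guarantees that the notion of ``associated'' is independent of the choice of local Hodge embedding (and of the corresponding choice of $\Mloc$-model, since these all agree with the local model $\Mloc_x(G,\{\mu\})$ characterized intrinsically by $(\Gg, \{\mu\})$ as in \cite{HPR}). Once this is in hand, Theorem \ref{charThmA} applies to the pair $(\SSh_\eK, \SSh'_\eK)$ and produces an isomorphism $\SSh_\eK \simeq \SSh'_\eK$ respecting the generic fiber identification and the transition maps of (1). The main potential obstacle is ensuring the compatibility of the associated systems with the level-lowering maps $\pi_{\eK'_p,\eK_p}$ in (3*); this is handled via Proposition \ref{depL} applied fibrewise, since the \'etale covers propagate the local system $\Lr_\eK$ functorially and the Dieudonn\'e displays at geometric points are rigid.
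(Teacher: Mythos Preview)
Your proposal is correct and matches the paper's approach exactly: the paper's proof of Theorem~\ref{indThm} is the one-line combination of Theorem~\ref{exAssSystem} (providing the versal associated systems, hence condition (3*)) with Theorem~\ref{charThmA} (the uniqueness characterization under (1), (2), (3*)), and you have simply unpacked the details---including the appeal to Proposition~\ref{indassociated} to reconcile the different local Hodge embeddings and the reference to \cite{KP} for conditions (1) and (2). One minor inaccuracy: the Vasiu--Zink purity theorem is not what underlies the dvr extension property in \cite{KP} at parahoric level (it is used for smooth models at hyperspecial level); the extension property here follows from the N\'eron--Ogg--Shafarevich criterion and the moduli interpretation of the Siegel model, as established directly in \cite{KP}.
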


\subsection{} Finally, we show:

\begin{thm}\label{thmLast}
Assume that $p$ is odd and that the Shimura data $(G, X)$ and the level subgroup $\eK$ satisfy the assumptions (1)-(4) of \S \ref{assumptions}. Then, the $\OEv$-models $\SS_{\eK}(G, X)$ of \cite{KP} are canonical, in the sense of Definition \ref{canonical613}.
\end{thm}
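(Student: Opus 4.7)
The plan is to verify the three defining properties of a canonical model from Definition \ref{canonical613} for the integral models $\SS_\eK=\SSh_\eK(G,X)$ of \cite{KP}, with $\M=\Mloc$. Normality is immediate since $\SS_\eK$ is constructed as the normalization of the Zariski closure of the Shimura variety in the Siegel integral model. Condition (1), the existence of finite \'etale transition maps $\pi_{\eK_p,\eK'_p}$ extending the ones on the generic fiber, follows from the construction in \cite{KP}: these come from the quotient $\SS_{\eK_p\eK'^p}\to\SS_{\eK_p\eK^p}$ given by the natural action of $\eK^p/\eK'^p$, plus the fact that over $\Sh_\eK(G,X)$ the map is finite \'etale and the integral models are normal. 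Condition (2), the extension property, follows by an argument \`a la Milne--Vasiu--Zink: for any dvr $R$ of mixed characteristic $(0,p)$, a point in $\SS_{\eK_p}(R[1/p])$ is pulled back from a Siegel modular point over $R$ by the N\'eron--Ogg--Shafarevich criterion and the abelian scheme extension property, and hence lies in $\SS_{\eK_p}(R)$ by normality.

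The bulk of the work is to establish (3), i.e.~to promote the versal associated system of Theorem \ref{exAssSystem} to a \emph{global} versal $(\Gg,\Mloc)$-display $\Dd_\eK$ over the $p$-adic formal scheme $\widehat\SS_\eK$. The starting point is the $p$-divisible group $\mathscr G=A[p^\infty]$ on $\SS_\eK$ pulled back from the universal abelian scheme. Its (contravariant) Dieudonn\'e crystal, evaluated on the Witt frame, produces a natural $(\GL_n,\mathrm{Gr}(g,\Lambda))$-display $(\calP_{\GL},q_{\GL},\Psi_{\GL})$ over $\widehat\SS_\eK$. What must be constructed are global Frobenius-invariant crystalline tensors $\und s_a$ inside $\calP_{\GL}(\iota)^\otimes$ which pointwise recover the tensors $\und s_{a,\bar x}$ produced in the proof of Theorem \ref{Exists}; this is precisely what the work of Hamacher and Kim provides, by showing that the Galois-invariant \'etale tensors $s_{a,\et}$ coming from $\Lr_\eK$ admit crystalline avatars over the $p$-adic formal model. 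Given these global tensors, we define
\[
\calP:=\underline{\mathrm{Isom}}_{(\und s_a),(s_a\otimes 1)}\bigl(\calP_{\GL}(\iota),\Lambda\otimes_{\Z_p}W(\O_{\widehat\SS_\eK})\bigr),
\]
and Corollary \ref{CorRaynaudGruson} together with the pointwise torsor statements forces $\calP$ to be a $\Gg$-torsor globally. The morphism $q_{\GL}$ factors through $\Mloc$ by Zariski-density of the generic fiber (it lands in $X_\mu$ there, and $\Mloc$ is the flat closure), giving $q$; and $\Psi_{\GL}$ restricts to the required isomorphism $\Psi\colon\calQ\xrightarrow{\sim}\calP$ since this can be checked pointwise at closed points via Proposition \ref{depL}.

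Having produced $\Dd_\eK=(\calP,q,\Psi)$, the base-change compatibility with the Dieudonn\'e $(\Gg,\Mloc)$-displays $\calD_{\bar x}$ at closed points is built into the construction, so $\Dd_\eK$ is associated to $\Lr_\eK$ in the sense of Definition \ref{defass2}. Versality of $\Dd_\eK$ is then just pointwise versality, which is exactly Proposition \ref{versalProp}. Finally, compatibility of the $\Dd_\eK$ under the \'etale transition morphisms $\pi_{\eK'_p,\eK_p}$ is automatic from the functoriality of the whole construction: the $p$-divisible group, the local system, the tensors, and the evaluation of the Dieudonn\'e crystal are all compatible with pullback along finite \'etale maps. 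The main obstacle throughout is the global extension of the tensors to the formal scheme $\widehat\SS_\eK$: at a single closed point this is the content of Proposition \ref{Propinfres}, and globalizing this so that the resulting sections glue across the formal scheme (rather than merely giving compatible data on all strict completions) is what requires appealing to Hamacher--Kim. With (3) in hand, together with the routine verifications of normality, (1) and (2), the models $\SS_\eK(G,X)$ satisfy Definition \ref{canonical613}.
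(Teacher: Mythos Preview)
Your proposal is correct and follows essentially the same route as the paper: reduce to constructing the global $(\Gg,\Mloc)$-display $\Dd_\eK$, build the underlying $\GL$-display from $\DD(A[p^\infty])(W(\O_{\widehat\SS_\eK}))$, invoke Hamacher--Kim for the global Frobenius-invariant tensors, define $\calP_\eK$ as the tensor-preserving isomorphism scheme, and check it is a $\Gg$-torsor via Corollary~\ref{CorRaynaudGruson}.

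The one place where you diverge slightly is the construction of $q_\eK$. You argue by Zariski closure (as in the proof of Theorem~\ref{Exists}): on the generic fiber the restriction of $q_{\GL}$ to $\calP\otimes R$ lands in $X_\mu$, hence the map factors through $\Mloc$. The paper instead invokes the local model diagram of \cite[Thm.~4.2.7]{KP}, which already supplies a smooth $\Gg$-equivariant map $\widetilde\SS_\eK\to\Mloc$ from the de Rham torsor, and then uses a second result of Hamacher--Kim \cite[Cor.~3.3.4]{HamaKim} identifying the crystalline and de Rham tensors to match $\calP_\eK\otimes\O_{\widehat\SS_\eK}$ with $\widetilde\SS_\eK$. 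Your argument is more self-contained and avoids this second citation, but it tacitly uses that the Hodge filtration on the generic fiber is of type $\mu$, which is exactly the content of the generic fiber of the local model diagram; so the underlying input is the same. The paper's route has the advantage that it explicitly identifies $q_\eK$ with the map coming from \cite{KP}, making compatibility with the existing literature transparent.
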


\begin{proof}  We  already know that $\SS_\eK=\SS_{\eK}(G, X)$ supports a locally universal associated system by Theorem \ref{exAssSystem}. We need to ``upgrade" this and show there is also an associated $(\Gg, \Mloc)$-display $\sD_\eK$ as in Definition \ref{defass2}. 
Write $\widehat\SS_\eK$ for the formal scheme obtained as the 
$p$-adic completion of $\SS_\eK$. The Dieudonn\'e crystal $\DD_\eK:=\DD(A[p^\infty])(W(\O_{\widehat\SS_\eK}))$ of the universal $p$-divisible group over $\SS_\eK$ gives a $\GL$-display over $\widehat\SS_\eK$. By work of Hamacher and Kim \cite[3.3]{HamaKim}, there are Frobenius invariant tensors $s_{a, \rm univ}\in \DD_\eK^\otimes$ which have the following property: For every $\bar x\in \SS_\eK(k)$,  the base change isomorphism
\begin{equation}\label{lastbc}
\DD_\eK\otimes_{W(\O_{\widehat\SS_\eK})}W(\hat R_{\bar x})\simeq M_{\bar x} \otimes_{\hat W(\hat R_{\bar x})}W(\hat R_{\bar x})
\end{equation}
maps $s_{a, \rm univ}$ to $m_a\otimes 1$. Here, we write $\calD_{\bar x}(\iota)=(M_{\bar x}, M_{1, \bar x }, F_{1, \bar x })$
and we recall that $m_a\in M^{\otimes}_{\bar x}$  are the tensors which are associated with $s_{a,\et}\in \Lr_{\eK}(\iota)^\otimes$ and are given by the $\Gg$-torsor $\calP_{\bar x}$ of the 
Dieudonn\'e $(\Gg, \Mloc)$-display $\calD_{\bar x}=(\calP_{\bar x}, q_{\bar x}, \Psi_{\bar x})$
(see the proof of Theorem \ref{Exists}).
We can now use this to give a $(\Gg, \Mloc)$-display $\sD_\eK=(\calP_\eK, q_\eK, \Psi_\eK)$ over $\widehat\SS_\eK$ as follows:
First set
\[
\calP_\eK=\underline{\rm Isom}_{(s_{a,\rm univ}), (s_a\otimes 1)}( \DD_\eK, \Lambda\otimes_{\Z_p}W(\O_{\widehat\SS_\eK})).
\]
Consider an open affine formal subscheme ${\rm Spf}(R)\subset \widehat\SS_\eK$. Then $R$ satisfies condition (N). 
Since (\ref{lastbc}) above respects the tensors, $\calP_\eK\otimes_{W(\O_{\widehat\SS_\eK})}W(\hat R_{\bar x})\simeq \calP_{\bar x}$, for all $\bar x\in \Spec(R/p)(k)$. Hence, for example by Corollary \ref{CorRaynaudGruson}, $\calP_\eK\otimes_{W(\O_{\widehat\SS_\eK})}W(R)$ is a $\Gg$-torsor over $\Spec(W(R))$. Therefore, $\calP_\eK$ is also a $\Gg$-torsor. It remains to give $q_\eK$ and $\Psi_\eK$. Recall that, under our assumptions, \cite[Theorem 4.2.7]{KP} gives a (``local model") diagram
\begin{equation*}
   \xymatrix{
	     &\widetilde{\SS_{\eK}}  \ar[dl]_-{\text{$\pi_\eK$}} \ar[dr]^-{\text{$q_\eK$}}\\
	   {\SS}_{\eK}   & & \ \Mloc \ ,
	}
\end{equation*}
in which the left arrow is a $\Gg$-torsor and the right arrow is smooth and $\Gg$-equivariant. The $\Gg$-torsor
$\widetilde\SS_\eK\to \SS_\eK$ is given as   
\[
\widetilde\SS_\eK:=\underline{\rm Isom}_{(s_{a,\rm DR}), (s_a)}( {\rm H}^1_{\rm DR}(A)^\vee, \Lambda\otimes_{\Z_p}\O_{ \SS_\eK}).
\]
Since by \cite[Cor. 3.3.4]{HamaKim} the comparison 
\[
\DD_\eK\otimes_{W(\O_{\widehat\SS_\eK})}\O_{\widehat\SS_\eK}\cong 
\rH^1_{\rm DR}(A)^\vee
\]
takes $s_{a, \rm univ}\otimes 1 $ to $s_{a, \rm DR}$, we have
\[
\calP_\eK\otimes_{W(\O_{\widehat\SS_\eK})}\O_{\widehat\SS_\eK}\cong \widetilde\SS_\eK\xrightarrow{ q_\eK} \Mloc
\]
which gives the desired $q_\eK$. Finally, we can give $\Psi_\eK$ using the Frobenius structure on $\DD_\eK$ following
the dictionary in \S \ref{compaZink2}. By similar arguments as above, this respects the tensors and so it gives an isomorphism
of $\Gg$-torsors. Then
$\sD_\eK$ gives the desired $(\Gg, \Mloc)$-display which satisfies the requirements of \S \ref{612}. The result follows.
\end{proof}

\begin{Remark} {\rm We expect that the above results (Theorems \ref{indThm}, \ref{thmLast}), can be extended so that they also apply to the integral models $\SS_{\eK}(G, X)$ constructed in \cite{KZhou}.
In the set-up of \cite{KZhou}, assumption (2) of \S\ref{assumptions} is weakened to allow for some wildly ramified groups with $G_{\rm ad}\otimes_\Q\Q_p\simeq \prod_{i=1}^m {\rm Res}_{F_i/\Q_p} (H_i)$, where each $H_i$ splits over a tamely ramified extension of $F_i$. %We will leave the details for another occasion.
}  
\end{Remark}

\bigskip

\end{document}